\documentclass[11pt]{article}
\usepackage[a4paper,margin=28mm]{geometry}
\usepackage[T1]{fontenc}
\usepackage{lmodern}
\usepackage{amsmath,amssymb,amsthm,mathtools,mathrsfs,bm}
\DeclareFontFamily{U}{rsfs}{}
\DeclareFontShape{U}{rsfs}{m}{n}{<->rsfs10}{}
\usepackage{booktabs,array,longtable,tabularx,multirow,enumitem}
\usepackage{tikz,xcolor,graphicx,caption,subcaption,placeins,needspace}
\usepackage{float}
\usetikzlibrary{calc,positioning,arrows.meta,patterns,matrix}
\usepackage[authoryear,round]{natbib}
\usepackage[hidelinks]{hyperref}
\definecolor{navy}{HTML}{173657}
\definecolor{ink}{HTML}{192D40}
\definecolor{muted}{HTML}{657583}
\definecolor{rule}{HTML}{D9E0E5}
\definecolor{blue}{HTML}{326B9C}
\definecolor{bluewash}{HTML}{E5EFF8}
\definecolor{amber}{HTML}{A66E28}
\definecolor{amberwash}{HTML}{F7EEDC}
\definecolor{sand}{HTML}{DFA861}
\definecolor{brick}{HTML}{B95A45}
\definecolor{brickwash}{HTML}{F7E9E3}
\definecolor{teal}{HTML}{187D78}
\definecolor{tealwash}{HTML}{E1F0EB}
\definecolor{greywash}{HTML}{F2F4F5}
\definecolor{soft}{HTML}{F5F8FA}
\newcommand{\PP}{\mathcal P}

\newcommand{\GF}{\mathsf{GF}}
\newcommand{\GG}{\mathsf{G}}
\newcommand{\UU}{\mathsf{U}}
\newcommand{\JJ}{\mathsf{J}}

\newcommand{\R}{\mathbb R}
\renewcommand{\epsilon}{\varepsilon}

\newcommand{\HH}{\mathcal H}

\newcommand{\coker}{\operatorname{coker}}

\newcommand{\eps}{\varepsilon}
\newcommand{\doi}[1]{\href{https://doi.org/#1}{\nolinkurl{doi:#1}}}

\newcommand{\RR}{\mathbb R}

\newcommand{\calA}{\mathcal A}
\newcommand{\calD}{\mathscr D}

\newcommand{\calC}{\mathcal C}

\newcommand{\calT}{\mathcal T}
\newcommand{\calZ}{\mathcal Z}

\newcommand{\rank}{\operatorname{rank}}

\newcommand{\ve}{\Delta_0^{\circ}}
\newcommand{\ee}{\Delta_1^{\circ}}

\newtheorem{theorem}{Theorem}[section]
\newtheorem{lemma}[theorem]{Lemma}
\newtheorem{proposition}[theorem]{Proposition}
\newtheorem{corollary}[theorem]{Corollary}
\theoremstyle{definition}
\newtheorem{definition}[theorem]{Definition}
\newtheorem{example}[theorem]{Example}
\theoremstyle{remark}
\newtheorem{remark}[theorem]{Remark}
\newtheorem{question}[theorem]{Question}
\numberwithin{equation}{section}
\setlist{itemsep=3pt,topsep=4pt,leftmargin=2em}
\hypersetup{pdftitle={The Dimension Landscape of Bivariate C2 Splines},pdfauthor={Xinyu Wu and Jiansong Deng}}
\title{The Dimension Landscape of Bivariate $C^2$ Splines}
\author{Xinyu Wu\\{\small\texttt{xinyu97@ustc.edu.cn}}
\and Jiansong Deng\\{\small\texttt{dengjs@ustc.edu.cn}}}
\date{}
\begin{document}
\maketitle
\begin{abstract}
Schumaker's lower bound incorporates local geometric corrections and provides insight into the dimensions of bivariate spline spaces. Building on this bound, we study bivariate $C^2$ spline dimensions from three perspectives: universal attainment, generic attainment, and geometric jumps. We prove universal attainment in degree seven for every legal realization of every triangulation. We prove generic attainment in degrees five and six for every fixed combinatorial triangulation and give explicit polynomial conditions certifying generic failure on fixed triangulations in degrees three and four. For geometric jumps with matching local direction types at corresponding interior vertices, we prove nonexistence in degree seven and construct examples in degrees three and five. In degree six, universal attainment and the existence of such geometric jumps remain open questions.
\end{abstract}
\noindent\textbf{Keywords:} bivariate $C^2$ splines; dimension landscape; Schumaker lower bound.\\\textbf{Mathematics Subject Classification (2020):} 41A15, 65D07.

\section{Introduction}\label{sec:intro}
Let $\mathcal T$ be a finite conforming triangulation of a connected polygonal disk $\Omega\subset\R^2$. For integers $d,r\ge0$, define
\[
S_d^r(\mathcal T)
:=\{s\in C^r(\Omega):s|_\tau\in\PP_d(\tau)\ \text{for every triangle }\tau\in\mathcal T\},
\]
where $\PP_d(\tau)$ denotes the restrictions to $\tau$ of bivariate polynomials of total degree at most $d$. For fixed $d$ and $r$, the dimension may depend on both the combinatorial structure of $\mathcal T$ and the positions of its vertices.

\citet{Strang73} initiated the study of this dimension problem and proposed a combinatorial dimension count for $C^1$ piecewise polynomials. \citet{Schumaker79} established a lower bound for bivariate splines on planar triangulations, with local correction terms determined by the number of distinct supporting directions at each interior vertex. We use this bound, denoted by $L_d^r(\mathcal T)$, as the common baseline throughout the paper. Thus $\dim S_d^r(\mathcal T)\ge L_d^r(\mathcal T)$, with the local corrections accounting for dependencies among the smoothness conditions at a vertex.

Research on spline dimensions has developed along three main lines: universal dimension formulas valid for every geometric realization, generic dimension formulas on each fixed combinatorial triangulation, and geometric jumps between realizations with the same combinatorics and vertexwise local direction types.

\paragraph{Universal dimension formulas.}
The universal question asks when $\dim S_d^r(\mathcal T)=L_d^r(\mathcal T)$ for every triangulation and every legal geometric realization. \citet{AlfeldSchumaker87} proved the formula for $d\ge4r+1$, and \citet{Hong91} extended the range to $d\ge3r+2$. For $C^1$ splines, \citet*{AlfeldPiperSchumaker87} established the quartic case; together with the higher-degree formulas, this gives universal attainment for every $d\ge4$. For $C^2$ splines, Hong's result gives universal attainment from degree eight onward.

\paragraph{Generic attainment and failure.}
The generic question fixes the combinatorial triangulation and varies all vertex coordinates. A property holds \emph{generically} if it holds at every legal realization outside the zero set of a suitable nonzero coordinate polynomial. \citet{Billera88} established the generic $C^1$ dimension formula for every $d\ge2$. For general smoothness, \citet{AlfeldSchumaker90} proved generic attainment at $d=3r+1$. Combined with the universal formulas in higher degrees, this gives generic attainment for all $d\ge3r+1$, hence for $C^2$ splines from degree seven onward.

\citet[Section~7]{DiPasqualeVillamizar21} discussed chord and boundary-pair constructions that produce splines beyond the global polynomial space even at generic vertex positions. For $C^2$ splines, these constructions yield cubic and quartic splines, respectively.

\begin{figure}[p]
\centering
\resizebox{\linewidth}{!}{
\begin{tikzpicture}[x=1cm,y=1cm,text=ink,
  every node/.style={inner sep=0pt,font=\fontsize{12}{14}\selectfont},
  phasecell/.style={line width=.9pt,rounded corners=3pt},
  jumpcell/.style={line width=.9pt,line join=round},
  formulacell/.style={line width=.9pt,line join=round},
  guide/.style={draw=black!20,dash pattern=on 3pt off 3pt,line width=.45pt},
  priorlink/.style={draw=knownresult!75,line width=.45pt,line cap=round},
  priorcite/.style={font=\fontsize{9.5}{11}\selectfont,text=ink!85,
    align=center,fill=white,inner sep=1.2pt}]
\definecolor{knownresult}{HTML}{B48A08}
\foreach \x in {4.75,8.25,11.75,15.25}{
  \draw[guide] (\x,1.98)--(\x,8.85);
}
\draw[brick,line width=1.1pt,dash pattern=on 6pt off 4pt]
  (4.75,1.95)--(4.75,9.54);
\node[text=brick,font=\fontsize{13}{15}\selectfont] at (4.75,9.99)
  {$d=2r$};
\draw[ink!75,line width=.85pt,-{Stealth[length=3mm]}] (4.08,8.85)--(16.42,8.85);
\foreach \x/\deg in {4.75/2,8.25/3,11.75/4,15.25/5}{
  \fill[ink!75] (\x,8.85) circle (.065);
  \node[fill=white,inner sep=2pt] at (\x,9.26) {$\deg$};
}
\node[anchor=west] at (16.57,8.85) {degree $d$};
\node[anchor=west,font=\fontsize{12}{15}\selectfont\bfseries] at (.42,7.58) {Universal formula};
\node[anchor=west,font=\fontsize{10}{12}\selectfont] at (.42,7.12) {All realizations};
\node[anchor=west,font=\fontsize{12}{15}\selectfont\bfseries] at (.42,5.36) {Generic phase};
\node[anchor=west,font=\fontsize{10}{12}\selectfont] at (.42,4.88) {Generic realizations};
\node[anchor=west,font=\fontsize{12}{15}\selectfont\bfseries] at (.42,2.80) {Geometric jump};
\node[anchor=west,font=\fontsize{10}{12}\selectfont] at (.42,2.34) {Fixed local data};
\foreach \x/\state in {4.75/\times,8.25/\times,11.75/\UU,15.25/\UU}{
  \draw[formulacell,draw=knownresult,fill=knownresult!10]
    (\x-.52,7.05)--(\x+.52,7.05)--(\x,7.96)--cycle;
  \node[text=knownresult,font=\fontsize{16}{18}\selectfont] at (\x,7.39) {$\state$};
}
\foreach \x in {4.75,8.25,11.75,15.25}{
  \draw[phasecell,draw=knownresult,fill=knownresult!10] (\x-.54,4.81) rectangle (\x+.54,5.55);
  \node[text=knownresult,font=\fontsize{16}{18}\selectfont] at (\x,5.18) {$\GG$};
}
\foreach \x in {4.75,8.25}{
  \draw[jumpcell,draw=knownresult,fill=knownresult!10]
    (\x-.54,2.62)--(\x,3.16)--(\x+.54,2.62)--(\x,2.08)--cycle;
  \node[text=knownresult,font=\fontsize{16}{18}\selectfont] at (\x,2.62) {$\JJ$};
}
\foreach \x in {11.75,15.25}{
  \draw[jumpcell,draw=knownresult,fill=knownresult!10]
    (\x-.54,2.62)--(\x,3.16)--(\x+.54,2.62)--(\x,2.08)--cycle;
  \node[text=knownresult,font=\fontsize{12}{14}\selectfont] at (\x,2.62) {no $\JJ$};
}
\foreach \x in {4.75,8.25}{
  \draw[priorlink] (\x,6.96)--(\x,6.80);
}
\node[priorcite,anchor=north] at (4.75,6.65) {Morgan--Scott\\\citeyearpar{MorganScott75}};
\node[priorcite,anchor=north] at (8.25,6.65) {Wu--Deng\\\citeyearpar{WuDeng26}};
\draw[priorlink] (4.75,4.70)--(4.75,4.54)--(8.25,4.54)--(8.25,4.70);
\node[priorcite,anchor=north] at (6.50,4.39) {Billera\\\citeyearpar{Billera88}};
\node[priorcite,anchor=north] at (4.75,1.75) {Morgan--Scott\\\citeyearpar{MorganScott75}};
\node[priorcite,anchor=north] at (8.25,1.75) {Wu--Deng\\\citeyearpar{WuDeng26}};
\draw[priorlink] (12.50,2.62)--(12.50,7.39);
\foreach \y in {7.39,5.18,2.62}{
  \draw[priorlink] (12.38,\y)--(12.50,\y);
}
\draw[priorlink] (12.50,5.18)--(12.65,5.18);
\node[priorcite,anchor=west] at (12.75,5.18)
  {Alfeld et al.\\\citeyearpar{AlfeldPiperSchumaker87}};
\draw[priorlink] (16.30,2.62)--(16.30,7.39);
\foreach \y in {7.39,5.18,2.62}{
  \draw[priorlink] (15.88,\y)--(16.30,\y);
}
\draw[priorlink] (16.30,5.18)--(16.63,5.18);
\node[priorcite,anchor=west] at (16.75,5.18) {Hong\\\citeyearpar{Hong91}};
\node[anchor=east,text=muted,font=\fontsize{10}{12}\selectfont] at (17.55,.95) {$\UU$ for all $d\ge5$};
\draw[draw=knownresult,fill=knownresult!10,line width=.8pt] (.42,.185) rectangle (.85,.515);
\node[anchor=west,font=\fontsize{10.5}{12.5}\selectfont] at (1.05,.35) {Prior work};
\end{tikzpicture}
}
\caption{The complete three-level phase classification for bivariate $C^1$ splines in degrees $d\ge2$. All entries are prior work, shown in yellow with representative author--year references. The quadratic entries originate in the manuscript of \citet{MorganScott75}; \citet{Shi91} characterizes the corresponding singularity. The red dashed line is $d=2r$; the three entries at $d=5$ remain valid for every higher degree.}
\label{fig:phase-c1}
\par\vspace{4mm}
\centering
\resizebox{\linewidth}{!}{
\begin{tikzpicture}[x=1cm,y=1cm,text=ink,
  every node/.style={inner sep=0pt,font=\fontsize{12}{14}\selectfont},
  phasecell/.style={line width=.9pt,rounded corners=3pt},
  jumpcell/.style={line width=.9pt,line join=round},
  formulacell/.style={line width=.9pt,line join=round},
  unresolved/.style={dash pattern=on 3pt off 2pt},
  guide/.style={draw=black!20,dash pattern=on 3pt off 3pt,line width=.45pt},
  priorlink/.style={draw=knownresult!75,line width=.45pt,line cap=round},
  workindex/.style={font=\fontsize{9.5}{11}\selectfont,text=newresult,fill=white,inner sep=1.2pt},
  priorcite/.style={font=\fontsize{9.5}{11}\selectfont,text=ink!85,
    align=center,fill=white,inner sep=1.2pt}]
\definecolor{newresult}{HTML}{326B9C}
\definecolor{knownresult}{HTML}{B48A08}
\definecolor{openresult}{HTML}{7D858C}
\foreach \x in {4.75,6.50,8.25,10.00,11.75,13.50}{
  \draw[guide] (\x,1.98)--(\x,8.85);
}
\draw[brick,line width=1.1pt,dash pattern=on 6pt off 4pt]
  (6.50,1.95)--(6.50,9.54);
\node[text=brick,font=\fontsize{13}{15}\selectfont] at (6.50,9.99)
  {$d=2r$};
\draw[ink!75,line width=.85pt,-{Stealth[length=3mm]}] (4.08,8.85)--(14.67,8.85);
\foreach \x/\deg in {4.75/3,6.50/4,8.25/5,10.00/6,11.75/7,13.50/8}{
  \fill[ink!75] (\x,8.85) circle (.065);
  \node[fill=white,inner sep=2pt] at (\x,9.26) {$\deg$};
}
\node[anchor=west] at (14.82,8.85) {degree $d$};
\node[anchor=west,font=\fontsize{12}{15}\selectfont\bfseries] at (.42,7.58) {Universal formula};
\node[anchor=west,font=\fontsize{10}{12}\selectfont] at (.42,7.12) {All realizations};
\node[anchor=west,font=\fontsize{12}{15}\selectfont\bfseries] at (.42,5.36) {Generic phase};
\node[anchor=west,font=\fontsize{10}{12}\selectfont] at (.42,4.88) {Generic realizations};
\node[anchor=west,font=\fontsize{12}{15}\selectfont\bfseries] at (.42,2.80) {Geometric jump};
\node[anchor=west,font=\fontsize{10}{12}\selectfont] at (.42,2.34) {Fixed local data};
\foreach \x/\state/\col in {4.75/\times/knownresult,6.50/\times/knownresult,8.25/\times/knownresult,11.75/\UU/newresult,13.50/\UU/knownresult}{
  \draw[formulacell,draw=\col,fill=\col!10]
    (\x-.52,7.05)--(\x+.52,7.05)--(\x,7.96)--cycle;
  \node[text=\col,font=\fontsize{16}{18}\selectfont] at (\x,7.39) {$\state$};
}
\draw[formulacell,unresolved,draw=openresult,fill=openresult!12]
  (9.48,7.05)--(10.52,7.05)--(10,7.96)--cycle;
\node[text=openresult,font=\fontsize{16}{18}\selectfont] at (10,7.39) {$?$};
\foreach \x/\state/\col in {4.75/\GF/knownresult,6.50/\GF/knownresult,8.25/\GG/newresult,10.00/\GG/newresult,11.75/\GG/knownresult,13.50/\GG/knownresult}{
  \draw[phasecell,draw=\col,fill=\col!10] (\x-.54,4.81) rectangle (\x+.54,5.55);
  \node[text=\col,font=\fontsize{16}{18}\selectfont] at (\x,5.18) {$\state$};
}
\foreach \x/\col in {4.75/newresult,6.50/knownresult,8.25/newresult}{
  \draw[jumpcell,draw=\col,fill=\col!10]
    (\x-.54,2.62)--(\x,3.16)--(\x+.54,2.62)--(\x,2.08)--cycle;
  \node[text=\col,font=\fontsize{16}{18}\selectfont] at (\x,2.62) {$\JJ$};
}
\draw[jumpcell,unresolved,draw=openresult,fill=openresult!12]
  (9.46,2.62)--(10,3.16)--(10.54,2.62)--(10,2.08)--cycle;
\node[text=openresult,font=\fontsize{16}{18}\selectfont] at (10,2.62) {$?$};
\foreach \x/\col in {11.75/newresult,13.50/knownresult}{
  \draw[jumpcell,draw=\col,fill=\col!10]
    (\x-.54,2.62)--(\x,3.16)--(\x+.54,2.62)--(\x,2.08)--cycle;
  \node[text=\col,font=\fontsize{12}{14}\selectfont] at (\x,2.62) {no $\JJ$};
}
\draw[draw=newresult!75,line width=.45pt] (11.75,6.96)--(11.75,6.80);
\node[workindex,anchor=north] at (11.75,6.65) {\S~\ref{sec:universal}};
\draw[draw=newresult!75,line width=.45pt]
  (8.25,4.70)--(8.25,4.54)--(10.00,4.54)--(10.00,4.70);
\node[workindex,anchor=north] at (9.125,4.39) {\S~\ref{sec:generic-attainment}};
\node[workindex,anchor=north] at (4.75,1.75) {\S~\ref{sec:jump-cubic}};
\node[workindex,anchor=north] at (8.25,1.75) {\S~\ref{sec:jump-quintic}};
\node[workindex,anchor=north] at (11.75,1.75) {\S~\ref{sec:universal}};
\draw[priorlink] (4.75,6.96)--(4.75,6.80)--(6.50,6.80)--(6.50,6.96);
\node[priorcite,anchor=north] at (5.35,6.65)
  {DiPasquale--Villamizar\\\citeyearpar{DiPasqualeVillamizar21}};
\draw[priorlink] (8.25,6.96)--(8.25,6.80);
\node[priorcite,anchor=north] at (8.40,6.65)
  {Yuan--Stillman\\\citeyearpar{YuanStillman19}};
\draw[priorlink] (4.75,4.70)--(4.75,4.54)--(6.50,4.54)--(6.50,4.70);
\node[priorcite,anchor=north] at (5.625,4.39)
  {DiPasquale--Villamizar\\\citeyearpar{DiPasqualeVillamizar21}};
\draw[priorlink] (11.75,4.70)--(11.75,4.54);
\node[priorcite,anchor=north] at (11.75,4.39)
  {Alfeld--Schumaker\\\citeyearpar{AlfeldSchumaker90}};
\node[priorcite,anchor=north] at (6.50,1.75)
  {Diener\\\citeyearpar{Diener90}};
\draw[priorlink] (14.55,2.62)--(14.55,7.39);
\foreach \y in {7.39,5.18,2.62}{
  \draw[priorlink] (14.13,\y)--(14.55,\y);
}
\draw[priorlink] (14.55,5.18)--(14.88,5.18);
\node[priorcite,anchor=west] at (15.00,5.18)
  {Hong\\\citeyearpar{Hong91}};
\node[anchor=east,text=muted,font=\fontsize{10}{12}\selectfont] at (15.80,.95)
  {$\UU$ for all $d\ge8$};
\draw[draw=newresult,fill=newresult!10,line width=.8pt] (.42,.185) rectangle (.85,.515);
\node[anchor=west,font=\fontsize{10.5}{12.5}\selectfont] at (1.05,.35) {This work};
\draw[draw=knownresult,fill=knownresult!10,line width=.8pt] (4.15,.185) rectangle (4.58,.515);
\node[anchor=west,font=\fontsize{10.5}{12.5}\selectfont] at (4.78,.35) {Prior work};
\draw[draw=openresult,fill=openresult!12,line width=.8pt] (8.30,.185) rectangle (8.73,.515);
\node[anchor=west,font=\fontsize{10.5}{12.5}\selectfont] at (8.93,.35) {Open};
\end{tikzpicture}
}
\caption{The three-level dimension landscape for bivariate $C^2$ splines. Blue indicates results of this work, with section references below the symbols; yellow indicates prior work with representative author--year references; gray indicates open questions. The quartic jump uses the Morgan--Scott configuration \citep{MorganScott75}, with its degree-$2r$ instability due to \citet{Diener90}. The red dashed line is $d=2r$. Explicit fixed-combinatorics witnesses for the low-degree $\GF$ entries are given in Section~\ref{sec:generic-failure}.}
\label{fig:phase}
\end{figure}

\paragraph{Geometric jumps with fixed local direction types.}
The jump question compares two realizations of the same combinatorial triangulation whose local direction types agree at corresponding interior vertices. The manuscript of \citet{MorganScott75} contains the classical example in which the dimension of $S_2^1$ changes from six to seven while these local data remain fixed. \citet{Shi91} characterized its geometric singularity. \citet{Diener90} studied dimension instability in degree $2r$, and \citet*{DengFengKozak00} analyzed the geometry of Morgan--Scott spaces in this degree.

Recent preprints extend this line of work. \citet{WuDeng26} constructed a $C^1$ cubic realization pair with different dimensions and the same supporting-line counts at corresponding interior vertices. \citet{Potu26} subsequently gave another counterexample for $S_3^1$, which can also be used to obtain a jump with fixed local data. \citet*{WuZhengDeng26} gave a geometric criterion for Morgan--Scott spaces $S_{2r}^r$, identifying concurrence of auxiliary lines as a source of an extra dimension. These results exhibit dimension changes arising from global geometric compatibility while the local direction data remain fixed.

\paragraph{Dimension excess and homological methods.}
Strict excess on a specified geometric triangulation has also been studied directly. \citet{Tohaneanu05} proved excess at $d=2r$ for every $r\ge1$ on a particular geometric triangulation with prescribed collinearities. \citet{YuanStillman19} produced a $C^2$ quintic counterexample to the Schenck--Stiller $2r+1$ conjecture. \citet*{SchenckStillmanYuan20} showed that no constant $c$, independent of $r$, makes $d\ge2r+c$ sufficient for universal attainment over all triangulations and smoothness orders.

A single realization with strict excess disproves universal attainment at the given degree, but does not by itself establish generic failure or a geometric jump. Generic failure requires strict excess for generic realizations of a fixed combinatorial triangulation. A geometric jump requires two realizations of the same triangulation with different dimensions and the same local direction types at corresponding interior vertices.

Homological methods provide an algebraic framework for dimension calculations. \citet{SchenckStillman97} used local cohomology to study the dimension defect across polynomial degrees. \citet{MourrainVillamizar13} developed dimension bounds and a short proof of the formula for $d\ge4r+1$. \citet{ToshniwalDiPasquale21} recovered the range $d\ge3r+2$ in a framework allowing mixed smoothness and more general mesh topologies. These works relate dimension defects to local ideals and global compatibility, providing the background for the local-ideal and cofactor formulations used below.

For $C^1$ splines in degrees $d\ge2$, the phase classification at these three levels is complete. Universal attainment holds exactly for $d\ge4$ \citep{AlfeldPiperSchumaker87,Hong91}, generic attainment holds for every $d\ge2$ \citep{Billera88}, and fixed-local-data jumps occur exactly in degrees two and three \citep{MorganScott75,Shi91,WuDeng26}. Figure~\ref{fig:phase-c1} summarizes this classification.

We therefore focus on $C^2$ splines. The classical formulas give universal attainment for $d\ge8$ and generic attainment for $d\ge7$, while the low-degree constructions above establish generic failure and geometric jumps in parts of the remaining range. Figure~\ref{fig:phase} places these results alongside the new results of this paper and the unresolved cases.

\Needspace{22\baselineskip}
Our contributions to the $C^2$ dimension landscape are as follows:
\begin{enumerate}[label=\textup{(\roman*)}]
\item \textbf{Universal attainment in degree seven.} Theorem~\ref{thm:universal} proves attainment for every legal realization, including coincident local supporting directions. Together with Hong's theorem for $d\ge8$, it gives universal attainment for every $d\ge7$. Fixed-local-data jumps are consequently excluded in degree seven.
\item \textbf{Generic attainment in degrees five and six.} Theorems~\ref{thm:generic} and~\ref{thm:gthreshold} prove attainment for every fixed combinatorial triangulation in these degrees. With the classical higher-degree results and the low-degree generic failures, this determines the exact generic threshold as five.
\item \textbf{Geometric jumps in degrees three and five.} Theorem~\ref{thm:jumps} gives explicit rational realization pairs with different dimensions and the same local direction types at corresponding interior vertices. The quintic pair also preserves all orientation signs.
\item \textbf{Two open problems in degree six.} We formulate the problems of universal attainment of the lower bound and the existence of geometric jumps with fixed local data for $S_6^2$ \textup{(Questions~\ref{q:s6} and~\ref{q:j6})}. We explain their logical relation and identify six and seven as the only possible universal thresholds.
\end{enumerate}
For the known cubic and quartic generic-failure mechanisms and the classical quartic jump, we provide explicit coordinates and short proofs. Theorem~\ref{thm:generic-failure-criterion} also characterizes generic failure by a positive generic defect on one fixed triangulation.

Section~\ref{sec:prelim} introduces the notation, lower bound, phase definitions, and generic-failure criterion. Sections~\ref{sec:universal}--\ref{sec:jumps} treat universal attainment, generic behavior, and geometric jumps, respectively. Section~\ref{sec:open} formulates the sextic questions, and Section~\ref{sec:conclusion} concludes. Appendices~\ref{app:u7} and~\ref{app:g5} give the complete proofs of universal septic attainment and generic quintic attainment.
\FloatBarrier

\section{Preliminaries}\label{sec:prelim}
This section fixes the notation and baseline for the dimension landscape. We first describe geometric realizations, spline spaces, and Schumaker's lower bound. We then define the universal, generic, and geometric-jump phases and characterize generic failure by a positive generic defect.

\subsection{Spline spaces and Schumaker's lower bound}\label{sec:splines-lower-bound}
Let $\Delta$ be a finite abstract triangulation of a disk, with vertex, edge, and triangle sets $\Delta_0$, $\Delta_1$, and $\Delta_2$, respectively. Write $V=\Delta_0$, and let $V^\circ=\Delta_0^\circ$ and $E^\circ=\Delta_1^\circ$ denote the interior vertex and interior edge sets, respectively.

A \emph{legal realization} $p=(p_v)_{v\in V}$, with $p_v=(x_v,y_v)\in\R^2$, is a straight-line embedding: vertices are distinct, triangles are nondegenerate, and two simplices intersect only in their common face, if any. Let $\mathcal R_\Delta\subset\R^{2|V|}$ be the space of these realizations, and let $\mathcal T_p$ be the resulting conforming triangulation of the polygonal disk $\Omega_p$. In geometric expressions for a fixed realization, we identify a vertex $v$ with its point $p_v$.

All vertices may move; no boundary-value or normal-derivative conditions are imposed. For an integer $d\ge0$, define
\[
S_d^2(\Delta,p):=S_d^2(\mathcal T_p)
=\{s\in C^2(\Omega_p):s|_\tau\in\PP_d(\tau)\ \text{for every triangle }\tau\in\mathcal T_p\},
\]
where $\PP_d(\tau)$ denotes the restrictions to $\tau$ of real bivariate polynomials of total degree at most $d$. We write $\PP_d$ for the full polynomial space and $P_d=\dim\PP_d=\binom{d+2}{2}$. Write $I=|V^\circ|$, $E=|E^\circ|$, and $b=|V\setminus V^\circ|$ for the numbers of interior vertices, interior edges, and boundary vertices. An interior edge is not contained in the boundary; both endpoints may be boundary vertices. The valence $\deg_\Delta(v)$ is the number of edges incident to $v$. Let $g_v$ count their distinct supporting lines. Opposite rays on one line count as one direction.

For $d\ge3$, set
\begin{equation}\label{eq:sigma}
\sigma_d(g)=\sum_{j=1}^{d-2}\max\{3-(g-1)j,0\}.
\end{equation}
The lower bound of \citet{Schumaker79}, with the actual local direction corrections, is
\begin{equation}\label{eq:schumaker}
L_d^2=P_d+E\binom{d-1}{2}-I(P_d-6)+\sum_{v\in V^\circ}\sigma_d(g_v).
\end{equation}
We use this bound throughout and define the dimension defect by
\begin{equation}\label{eq:delta}
\delta_d^L(\Delta,p)=\dim S_d^2(\Delta,p)-L_d^2(\Delta,p).
\end{equation}

Let $N_j$ count interior vertices with exactly $j$ supporting directions. For $C^2$ smoothness, \eqref{eq:sigma} becomes
\begin{equation}\label{eq:localsimple}
\sigma_3(g)=\begin{cases}2&g=2,\\1&g=3,\\0&g\ge4,\end{cases}\qquad
\sigma_d(g)=\begin{cases}3&g=2,\\1&g=3,\\0&g\ge4,\end{cases}\quad d\ge4.
\end{equation}
Euler's identity for a disk gives $E=3I+b-3$ \citep[Section 2]{MourrainVillamizar13}. Substitution yields
\begin{equation}\label{eq:master}
\begin{split}
L_d^2-P_d&=(d-2)(d-4)I+\binom{d-1}{2}(b-3)+N_3+\kappa_dN_2,\\
\kappa_3&=2,\qquad\kappa_d=3\quad(d\ge4).
\end{split}
\end{equation}
Thus $L_d^2\ge P_d$ for $d\ge4$, whereas $L_3^2<P_3$ is possible. Table~\ref{tab:lower} lists the low-degree bounds.

\begin{table}[!htbp]\centering\small
\begin{tabular}{ccl}\toprule
$d$&$P_d$&$L_d^2$\\\midrule
3&10&$10-I+(b-3)+N_3+2N_2$\\
4&15&$15+3(b-3)+N_3+3N_2$\\
5&21&$21+3I+6(b-3)+N_3+3N_2$\\
6&28&$28+8I+10(b-3)+N_3+3N_2$\\
7&36&$36+15I+15(b-3)+N_3+3N_2$\\
8&45&$45+24I+21(b-3)+N_3+3N_2$\\\bottomrule
\end{tabular}
\caption{Lower bounds using the actual local geometry, including coincident supporting directions.}\label{tab:lower}
\end{table}
\FloatBarrier

Since every global cubic polynomial is a $C^2$ spline, we have $\dim S_3^2\ge P_3=10$. Table~\ref{tab:lower} therefore implies
\[
\dim S_3^2\ge 10>L_3^2
\qquad\text{whenever}\qquad
I>(b-3)+N_3+2N_2.
\]
Thus, in degree three, strict excess over Schumaker's bound may already be forced by the dimension of the global polynomial space. Example~\ref{ex:gf3} illustrates this situation.

\subsection{Phase definitions}\label{sec:phase-definitions}
Following \citet{Billera88}, we say that a property $\mathcal P$ \emph{holds generically} for realizations of a fixed abstract triangulation $\Delta$ if there is a nonzero real polynomial $H_\Delta$ in the coordinates of all vertices such that
\[
H_\Delta(p)\ne0\quad\Longrightarrow\quad\mathcal P(\Delta,p)
\qquad\text{for every }p\in\mathcal R_\Delta.
\]
Using this convention, we define the four phases below.

\begin{definition}[Dimension phases]\label{def:phase}
Fix $(d,2)$ with $d\ge3$.
\begin{description}[leftmargin=13mm,labelwidth=10mm,style=nextline]
\item[$\UU$] \textbf{Universal attainment:} $\delta_d^L=0$ on every realization of every triangulation.
\item[$\GG$] \textbf{Generic attainment:} for every fixed abstract triangulation $\Delta$, some nonzero coordinate polynomial $H_{\Delta,d}$ makes every $p\in\mathcal R_\Delta$ with $H_{\Delta,d}(p)\ne0$ satisfy $\delta_d^L(\Delta,p)=0$.
\item[$\GF$] \textbf{Generic failure:} the negation of $\GG$; that is, $(d,2)\in\GF$ if and only if $(d,2)\notin\GG$.
\item[$\JJ$] \textbf{Geometric jump with fixed local data:} some fixed abstract triangulation has two realizations with equal local direction types at corresponding interior vertices but different $\dim S_d^2$.
\end{description}
Here the \emph{local direction type} of an interior vertex is one of the three cases $g_v=2$, $g_v=3$, or $g_v\ge4$.
\end{definition}

Accordingly, $(d,r)\in\UU$, $(d,r)\in\GG$, $(d,r)\in\GF$, and $(d,r)\in\JJ$ mean, respectively, universal attainment, generic attainment, generic failure, and the existence of a fixed-local-data geometric jump at degree $d$ and smoothness $r$. From this point onward, we restrict these sets to $r=2$.

\begin{remark}[Fixed local data in a jump]\label{rem:fixed-local-data}
A pair witnessing $\JJ$ must preserve the local direction type at every corresponding interior vertex. In particular, $N_2$ and $N_3$ remain unchanged, so the two realizations have the same actual lower bound. A dimension difference caused by changing these counts is excluded from $\JJ$; Example~\ref{ex:u7local} illustrates this distinction. Equal totals $N_2,N_3$ say only that the numbers of two-direction and three-direction vertices agree. They do not specify which vertices have each type. Definition~\ref{def:phase} requires each corresponding vertex to retain its own type: two, three, or at least four supporting directions.
\end{remark}

\begin{remark}[Relations between the three levels]\label{rem:phase-relations}
Universal attainment, generic behavior, and geometric jumps are different properties with different quantifiers. They satisfy
\[
\UU\subseteq\GG,\qquad \UU\cap\JJ=\varnothing.
\]
Generic attainment and geometric jumps can coexist: a generic dimension formula may hold while special realizations of the same combinatorial triangulation have larger dimension. The quintic results in Sections~\ref{sec:generic} and~\ref{sec:jumps} give this coexistence.
\end{remark}

The next theorem characterizes generic failure by a positive generic defect on one fixed triangulation.

\begin{theorem}[Generic failure and positive generic defect]\label{thm:generic-failure-criterion}
Fix $d\ge3$. For each finite abstract triangulation $\Delta$ of a disk, there is a nonnegative integer $\delta_{\Delta,d}^{\mathrm{gen}}$ and a nonzero polynomial $Q_{\Delta,d}$ in all vertex coordinates such that
\begin{equation}\label{eq:generic-defect}
Q_{\Delta,d}(p)\ne0\quad\Longrightarrow\quad
\delta_d^L(\Delta,p)=\delta_{\Delta,d}^{\mathrm{gen}}
\qquad(p\in\mathcal R_\Delta).
\end{equation}
We call this integer the \emph{generic defect} of $\Delta$ in degree $d$. The following are equivalent:
\begin{enumerate}[label=\textup{(\roman*)}]
\item $(d,2)\in\GF$.
\item Some fixed triangulation $\Delta$ satisfies $\delta_{\Delta,d}^{\mathrm{gen}}>0$.
\item There exist a fixed triangulation $\Delta$ and a nonzero coordinate polynomial $H$ such that every legal realization with $H(p)\ne0$ satisfies $\delta_d^L(\Delta,p)>0$.
\end{enumerate}
\end{theorem}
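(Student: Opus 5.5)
The plan has two parts. First, the existence of the generic defect comes from a rank argument. Then the equivalences follow from the definitions.

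\textbf{Existence of the generic defect.} Fix $\Delta$ and $d$. Write the spline space as the kernel of a linear map. Take the coefficients of the polynomial pieces on the triangles as unknowns, $|\Delta_2|P_d$ of them. For each interior edge $e$ with endpoints $u,w$, impose $C^2$ continuity across $e$. This says that the difference of the two adjacent pieces is divisible by $\ell_e^3$, where $\ell_e$ is the affine form vanishing on the line through $p_u,p_w$. Equivalently, the difference and its first and second derivatives in the direction transverse to $e$ vanish along the line. We express these conditions as linear equations whose coefficients are polynomials in the vertex coordinates. The simplest route is to require that the jump vanish, together with its first two normal derivatives, when restricted to the parametrized line $t\mapsto p_u+t(p_w-p_u)$. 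Clearing denominators, the coefficients are polynomial in $p$, and the number of rows is independent of $p$. Since $p_u\ne p_w$ on $\mathcal R_\Delta$, the kernel is exactly $S_d^2(\Delta,p)$. Hence $\dim S_d^2(\Delta,p)=|\Delta_2|P_d-\rank M(p)$ for a polynomial matrix $M(p)$. Let $\rho$ be the maximal rank of $M(p)$ over $p\in\mathcal R_\Delta$, and let $m(p)$ be a $\rho\times\rho$ minor that is nonzero at some legal realization. Off the zero set of $m$, the dimension is constant and equal to its minimum $D_{\min}$.

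Next we handle the lower bound $L_d^2(\Delta,p)$, which depends on $p$ only through the counts $N_2$ and $N_3$. A local direction type is determined by coincidences of edge directions at interior vertices. Two edges $vu$ and $vw$ at $v$ lie on one line exactly when $\det(p_u-p_v,p_w-p_v)=0$. Let $c(p)$ be the product of these determinants over all interior vertices $v$ and all pairs of neighbours $u\ne w$. Then $c$ is a nonzero polynomial, since it is nonzero on any realization with no collinear edge pairs. Such realizations exist because $\mathcal R_\Delta$ is a nonempty open set, so a small perturbation of any legal realization works. Off the zero set of $c$, every $g_v=\deg_\Delta(v)\ge3$, so $L_d^2$ equals the constant $L^{\mathrm{gen}}$ obtained from the valences. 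Set $Q_{\Delta,d}=m\,c$. This is a nonzero polynomial, because a product of nonzero polynomials is nonzero and $\mathcal R_\Delta$ is open and nonempty, so some legal $p$ avoids both zero sets. Define $\delta^{\mathrm{gen}}_{\Delta,d}=D_{\min}-L^{\mathrm{gen}}$. This is nonnegative by Schumaker's bound $\dim S_d^2\ge L_d^2$, evaluated at any point where $Q_{\Delta,d}\ne0$.

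\textbf{The equivalences.} For (ii)$\Rightarrow$(iii), take $H=Q_{\Delta,d}$. For (iii)$\Rightarrow$(i), suppose $(d,2)\in\GG$ with polynomial $H_{\Delta,d}$. The product $H\,H_{\Delta,d}$ is nonzero, so some legal $p$ avoids both zero sets. At that $p$ we would have both $\delta>0$ and $\delta=0$, a contradiction. For (i)$\Rightarrow$(ii), use the contrapositive. If every $\Delta$ has $\delta^{\mathrm{gen}}_{\Delta,d}=0$, then $Q_{\Delta,d}$ serves as $H_{\Delta,d}$ for every $\Delta$, so $(d,2)\in\GG$.

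\textbf{Main obstacle.} The main obstacle is writing the smoothness conditions as a polynomial-coefficient matrix with a fixed number of rows. This must cut out exactly the spline space at every legal realization, not merely at generic ones. The parametrized-edge formulation handles this, because a degree-$\le d$ polynomial restricted to a line vanishes identically exactly when all of its coefficients in $t$ vanish. The second point needing care is that a nonzero polynomial is nonvanishing somewhere on the open set $\mathcal R_\Delta$. This holds because a nonzero polynomial cannot vanish identically on a nonempty open subset of $\R^{2|V|}$.
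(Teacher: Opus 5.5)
Your argument is correct and has the same structure as the paper's proof. Both use a polynomial matrix whose rank gives $\dim S_d^2(\Delta,p)$ at every legal realization, multiply a maximal-rank minor by the product of collinearity determinants at interior vertices, get nonnegativity from Schumaker's bound, and prove the equivalences from the fact that a nonzero polynomial cannot vanish on the open set $\mathcal R_\Delta$. The one difference is the matrix: you encode splines by piece coefficients with normal-derivative conditions along each edge, while the paper uses the Chui--Wang smoothing-cofactor matrix $C_d(p)$ with $\dim S_d^2=P_d+M-\rank C_d(p)$, which is more compact but depends on the cofactor-conformality theorem.

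You should also add the one-line uniqueness check that the paper includes. A second pair $(\widetilde\delta,\widetilde Q)$ must agree with yours at any legal point where $Q_{\Delta,d}\widetilde Q\ne0$, and this is what makes ``the generic defect'' in (ii) well defined.
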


\begin{proof}
Fix $\Delta$. We first establish existence by constructing the generic defect and its nonzero polynomial. For each interior edge $e=(a,b)$, take $\ell_e(x)=\det(p_b-p_a,x-p_a)$ and one smoothing cofactor $q_e\in\PP_{d-3}$, so that adjacent polynomial pieces satisfy $f_{\tau^+}-f_{\tau^-}=\ell_e^3q_e$. By the smoothing cofactor-conformality method \citep[Lemma~2.2 and Theorem~2.1]{ChuiWang83}, on a disk the compatible cofactor tuples correspond to splines modulo global polynomials. Their interior-vertex compatibility conditions are
\[
\sum_{e\ni v}\epsilon_{ve}\ell_e^3q_e\equiv0,
\qquad v\in V^\circ,
\]
where $\epsilon_{ve}\in\{1,-1\}$ are fixed signs from coherent oriented circuits around the interior vertices. The cofactor coefficients give $M=E\binom{d-1}{2}$ unknowns. Expanding the compatibility conditions in fixed monomial bases gives a matrix $C_d(p)$ with polynomial entries in the vertex coordinates. Hence, for every legal realization,
\begin{equation}\label{eq:polynomial-constraint-rank}
\dim S_d^2(\Delta,p)=P_d+M-\rank C_d(p).
\end{equation}

The ranks $\rank C_d(p)$ attained as $p$ ranges over $\mathcal R_\Delta$ form a nonempty finite set of integers. Let $\rho$ be their maximum and choose a legal $p_*$ attaining it. Select a $\rho$-minor $F$ of $C_d(p)$ nonzero at $p_*$, fixing its row and column indices; when $\rho=0$, take $F=1$. Then $F$ is a nonzero coordinate polynomial, and every legal $p$ with $F(p)\ne0$ satisfies $\rank C_d(p)=\rho$.

Choose any ordering of the vertices and set
\[
D_\Delta(p)=\prod_{v\in V^\circ}
\ \prod_{\substack{i<j\\ i,j\text{ adjacent to }v}}
\det(p_i-p_v,p_j-p_v),
\]
with an empty product equal to $1$. Each factor is a nonzero polynomial in the full coordinate space. Thus $D_\Delta\not\equiv0$, and $D_\Delta(p)\ne0$ forces all incident supporting lines at each interior vertex to be distinct. Consequently, $g_v=\deg_\Delta(v)$ and the actual lower bound has the fixed value
\[
L_{\Delta,d}^{\mathrm{gen}}
=P_d+E\binom{d-1}{2}-I(P_d-6)
+\sum_{v\in V^\circ}\sigma_d(\deg_\Delta(v)).
\]

Since $F(p_*)\ne0$, it remains nonzero under sufficiently small perturbations of $p_*$. The nonzero polynomial $D_\Delta$ cannot vanish on an entire neighborhood of $p_*$, so one can choose a sufficiently small legal perturbation $p$ with $D_\Delta(p)F(p)\ne0$. Set $Q_{\Delta,d}=D_\Delta F$. At every legal realization with $Q_{\Delta,d}(p)\ne0$,
\[
\delta_d^L(\Delta,p)
=P_d+M-\rho-L_{\Delta,d}^{\mathrm{gen}}
=:\delta_{\Delta,d}^{\mathrm{gen}}\ge0,
\]
where the inequality is Schumaker's lower bound. The constructed witness polynomial $Q_{\Delta,d}=D_\Delta F$ ensures that the defect is constant at legal realizations in its nonvanishing set. Thus the construction gives a pair $(\delta_{\Delta,d}^{\mathrm{gen}},Q_{\Delta,d})$ satisfying \eqref{eq:generic-defect}, establishing existence.

Next we prove uniqueness of the integer. Suppose a second pair $(\widetilde\delta,\widetilde Q)$ consists of a nonnegative integer and a nonzero coordinate polynomial such that every legal $p$ with $\widetilde Q(p)\ne0$ satisfies $\delta_d^L(\Delta,p)=\widetilde\delta$. The nonzero polynomial $Q_{\Delta,d}\widetilde Q$ cannot vanish on a nonempty open set of legal realizations, so some legal $p_0$ satisfies $Q_{\Delta,d}(p_0)\widetilde Q(p_0)\ne0$. There $\delta_{\Delta,d}^{\mathrm{gen}}=\delta_d^L(\Delta,p_0)=\widetilde\delta$, proving uniqueness of the generic defect.

For a fixed $\Delta$, generic attainment is equivalent to $\delta_{\Delta,d}^{\mathrm{gen}}=0$. Indeed, if this triangulation attains the bound generically, Definition~\ref{def:phase} provides a nonzero coordinate polynomial $K$ such that every legal $p$ with $K(p)\ne0$ satisfies $\delta_d^L(\Delta,p)=0$. The nonzero polynomial $KQ_{\Delta,d}$ is nonzero at some legal realization; there the defect is both zero and $\delta_{\Delta,d}^{\mathrm{gen}}$. Conversely, when this integer is zero, $Q_{\Delta,d}$ itself witnesses generic attainment. Therefore
\[
(d,2)\in\GG
\quad\Longleftrightarrow\quad
\delta_{\Delta,d}^{\mathrm{gen}}=0\ \text{for every }\Delta.
\]
Taking the negation and using nonnegativity proves \textup{(i)}$\Longleftrightarrow$\textup{(ii)}. If \textup{(ii)} holds, take $H=Q_{\Delta,d}$ to obtain \textup{(iii)}. Conversely, under \textup{(iii)}, the product $HQ_{\Delta,d}$ is nonzero at some legal realization, where \eqref{eq:generic-defect} gives $\delta_{\Delta,d}^{\mathrm{gen}}=\delta_d^L(\Delta,p)>0$. This proves \textup{(ii)}.
\end{proof}

\begin{remark}[Verifying generic failure]\label{rem:verifying-generic-failure}
The positive-generic-defect criterion is easier to verify than the literal negation of generic attainment: it suffices to find one fixed triangulation and one nonzero polynomial $H$ such that every legal realization with $H\ne0$ satisfies $\dim S_d^2>L_d^2$.
\end{remark}

\FloatBarrier

\section{Universal attainment}\label{sec:universal}
We establish universal attainment in degree seven while allowing coincident local supporting directions. A local-direction example then explains why different realizations can still have different dimensions when their actual lower bounds differ.

\begin{theorem}[Universal septic attainment]\label{thm:universal}
For every real straight-line realization of every finite triangulated disk, including coincident local supporting directions,
\begin{equation}\label{eq:u7main}
\dim S_7^2(\Delta,p)=L_7^2(\Delta,p)
=36+15E-30I+3N_2+N_3.
\end{equation}
\end{theorem}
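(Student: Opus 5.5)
Since Schumaker's lower bound \eqref{eq:schumaker} always holds, it suffices to prove $\dim S_7^2(\Delta,p)\le L_7^2(\Delta,p)$. By \eqref{eq:polynomial-constraint-rank} this is the same as showing that the cofactor compatibility matrix $C_7(p)$ has rank at least $M-(L_7^2-P_7)$. Equivalently, the space of compatible cofactor tuples $(q_e)_{e\in E^\circ}$, with $q_e\in\PP_4$, has dimension exactly $15E-30I+3N_2+N_3$. The formula in \eqref{eq:u7main} is just \eqref{eq:schumaker} at $d=7$: use $P_7=36$, $\binom{6}{2}=15$, $P_7-6=30$, and read the $\sigma_7$ values off \eqref{eq:localsimple}.

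The plan is Hong's approach for $d\ge3r+2$, pushed down to $d=3r+1=7$ by a careful local analysis. Hong's approach is an inductive peeling of interior vertices combined with a count of minimal determining sets.

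\textbf{Step 1 (local count).} At each interior vertex $v$, with $g=g_v$ distinct supporting lines, the local cofactor space has the Schumaker dimension. Distinct edges on the same line are merged first. The module of syzygies of $\{\ell_1^3,\dots,\ell_g^3\}$ in degree $4$ then has dimension $4(g-1)\cdot 5/2\cdots$, computed exactly via Schumaker's count:
\[
\dim\Big\{(q_i):\sum\ell_i^3q_i=0\Big\}_{\deg 4}=5g-P_7+6+\sigma_7(g)\quad\text{(restricted to the homogeneous part)}.
\]
This reduces, after homogenizing at $v$, to the classical fact that powers of $g$ distinct linear forms impose independent conditions up to the $\sigma$ corrections. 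It holds for every $g\ge2$.

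\textbf{Step 2 (global gluing).} Order the interior vertices by a shelling-like sequence: repeatedly choose an interior vertex adjacent to the boundary of the current uncovered region, as in \citet{Hong91} and \citet{AlfeldSchumaker87}. Build a minimal determining set in Bernstein–Bézier form. Around each vertex, the $C^2$ data of the degree-$7$ pieces occupy the disk of radius $2$. The edge conditions near each edge fill the middle rings. For $d=7$ the two radius-$2$ disks at the ends of an edge together cover $3+3=6$ of the $8$ positions along the edge. This leaves a gap of $2$, which I must check is enough for the middle-ring conditions to be enforced independently.

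\textbf{Main obstacle.} The main obstacle is the middle-ring count. At a newly attached vertex, some of its edges lead to already processed vertices, and the ring-$3$ domain points are shared with the neighbors' disks. Hong's inequality $d\ge3r+2$ guarantees that these rings decouple. At $d=3r+1$ the ring-$3$ conditions around a vertex interact with the radius-$2$ disks of the neighbors exactly at the edge midpoint region.

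I would handle this by proving that, at the moment a vertex is attached, the ring-$3$ smoothness system around it, with the data prescribed on the already-processed edges, is still of full rank. I would express it as a univariate problem on the ring: a periodic or open chain of interpolation conditions for degree-$4$ cofactors along each edge. I would then show its solvability by a case analysis on how many edges go back to processed vertices (one, two, or more). Collinear edge pairs through $v$ (the $N_2$, $N_3$ cases) need separate treatment, since there the ring conditions degenerate. Those degeneracies are exactly compensated by the $\sigma_7$ terms.

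Finally, sum the free parameters over the peeling to get $36+15E-30I+3N_2+N_3$, which bounds the dimension above and closes the argument.
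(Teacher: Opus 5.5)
Your reduction is correct and matches the paper's: $\dim S_7^2=L_7^2$ is equivalent to the compatibility conditions having rank $m=30I-3N_2-N_3=\sum_v\dim(J_v)_7$, i.e.\ to surjectivity of the septic cofactor map onto the actual local-ideal targets. (Your Step~1 formula is off. With cofactors in $\PP_4$ the local syzygy space has dimension $15g-30+\sigma_7(g)$, equivalently $\dim(J_v)_7=27,29,30$ for $g_v=2,3,\ge4$. This local count is classical and is not where the difficulty lies.)

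The genuine gap is Step~2, which is the entire content of the theorem. You correctly note that the decoupling behind Hong's $d\ge3r+2$ argument is lost at $d=3r+1$. You then only assert that the ring system at a newly attached vertex, with data already fixed on its back edges, ``is still of full rank,'' to be checked by a case split on the number of back edges. That restates what must be proved rather than proving it. At $d=7$, the rings carrying the local corrections at $u$ and at $v$ meet on the edge $uv$ itself: distance $3$ from one endpoint is distance $4$ from the other. So whether the conditions introduced at a new vertex are independent of those already imposed depends on the geometry of the whole processed region, not on how many back edges there are. The $\sigma_7$ terms account only for dependencies inside a single star. What must be excluded at $d=7$ is a dependency propagating through a chain or cycle of coupled stars, and nothing in a fixed shelling order rules this out.

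The paper uses no vertex ordering; it dualizes instead. A nonzero row relation gives a compatible state $(H_v)$ with $D_e^3H_u=D_e^3H_v$ on every edge. This yields unique face potentials and area-normalized vertex scalars $\mu_v$ with these properties: they vanish on the boundary, can be gauged to zero at two-direction vertices, are not all zero, and satisfy the area-transfer identity $\mu_u[u,w,w']+\mu_v[v,w,w']=0$. The decisive step is an \emph{adaptive, geometric} extremal choice: a strictly exposed point of the support of $\mu$, so that all its nonzero-scalar neighbors lie in an open half-plane. Area transfer then forces exactly two consecutive zero-scalar neighbors in its star. The cyclic cubic Hermite recurrence, the quadratic companion identity, and the closed-star flux identity then give a sign contradiction. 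Your proposal lacks a global extremal principle of this kind, or some equivalent new idea. A case analysis on back-edge counts does not supply it.
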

Appendix~\ref{app:u7} proves the theorem by excluding a nonzero row relation on the local-ideal target. The proof passes from a dual state to its scalar support and rules out the resulting convex-hull star using cyclic cubic identities; Section~\ref{u7:sec:intro} gives the detailed outline.

Together with Hong's theorem for $d\ge8$ \citep{Hong91}, Theorem~\ref{thm:universal} gives universal attainment for every $d\ge7$, retaining the local corrections for coincident supporting directions. In particular, $(7,2)\notin\JJ$: two realizations with the same vertexwise local direction types have the same actual lower bound and hence, by Theorem~\ref{thm:universal}, the same dimension.

Universal attainment does not mean that every realization of one abstract triangulation has the same dimension: each realization attains its own actual lower bound, which may vary with $N_2,N_3$. The following example explains why the geometric-jump comparisons in Section~\ref{sec:jumps} require fixed local direction data, and hence unchanged $N_2,N_3$.
\begin{example}[A septic dimension change from local directions]\label{ex:u7local}
Take the square boundary $(-1,-1)$, $(1,-1)$, $(1,1)$, $(-1,1)$ and join its four corners to one interior vertex. The two realizations differ only in this interior coordinate:
\[
\begin{array}{c|c|c|c}
\text{Interior coordinate}&g_v&(I,E,b)&\dim S_7^2=L_7^2\\\hline
(0,0)&2&(1,4,4)&69\\
(1/4,1/8)&4&(1,4,4)&66
\end{array}
\]
See Figures~\ref{fig:u7local}\subref{fig:u7local-1} and~\ref{fig:u7local}\subref{fig:u7local-2}.
\end{example}
\begin{figure}[!htbp]\centering
\begin{subfigure}[t]{.47\linewidth}\centering
\begin{tikzpicture}[scale=.78,line cap=round,every node/.style={font=\footnotesize}]
\coordinate (v0) at (0.000000000,0.000000000);
\coordinate (v1) at (6.000000000,0.000000000);
\coordinate (v2) at (6.000000000,6.000000000);
\coordinate (v3) at (0.000000000,6.000000000);
\coordinate (v4) at (3.000000000,3.000000000);
\draw[teal!75!ink,line width=.6pt] (v0)--(v1);
\draw[teal!75!ink,line width=.6pt] (v0)--(v3);
\draw[teal!75!ink,line width=.6pt] (v0)--(v4);
\draw[teal!75!ink,line width=.6pt] (v1)--(v2);
\draw[teal!75!ink,line width=.6pt] (v1)--(v4);
\draw[teal!75!ink,line width=.6pt] (v2)--(v3);
\draw[teal!75!ink,line width=.6pt] (v2)--(v4);
\draw[teal!75!ink,line width=.6pt] (v3)--(v4);
\draw[ink,line width=.9pt] (v0)--(v1)--(v2)--(v3)--cycle;
\fill[teal] (v0) circle(1.45pt);
\node[below left,inner sep=1.3pt,text=ink] at (v0) {$ 0 $};
\fill[teal] (v1) circle(1.45pt);
\node[below right,inner sep=1.3pt,text=ink] at (v1) {$ 1 $};
\fill[teal] (v2) circle(1.45pt);
\node[above right,inner sep=1.3pt,text=ink] at (v2) {$ 2 $};
\fill[teal] (v3) circle(1.45pt);
\node[above left,inner sep=1.3pt,text=ink] at (v3) {$ 3 $};
\fill[teal] (v4) circle(1.45pt);
\node[above,inner sep=1.3pt,text=ink] at (v4) {$ 4 $};
\path[use as bounding box] (-.45,-.45) rectangle (6.45,6.45000000);
\end{tikzpicture}
\caption{$g_v=2,\quad\dim S_7^2=L_7^2=69$}\label{fig:u7local-1}
\end{subfigure}\hfill
\begin{subfigure}[t]{.47\linewidth}\centering
\begin{tikzpicture}[scale=.78,line cap=round,every node/.style={font=\footnotesize}]
\coordinate (v0) at (0.000000000,0.000000000);
\coordinate (v1) at (6.000000000,0.000000000);
\coordinate (v2) at (6.000000000,6.000000000);
\coordinate (v3) at (0.000000000,6.000000000);
\coordinate (v4) at (3.750000000,3.375000000);
\draw[blue!75!ink,line width=.6pt] (v0)--(v1);
\draw[blue!75!ink,line width=.6pt] (v0)--(v3);
\draw[blue!75!ink,line width=.6pt] (v0)--(v4);
\draw[blue!75!ink,line width=.6pt] (v1)--(v2);
\draw[blue!75!ink,line width=.6pt] (v1)--(v4);
\draw[blue!75!ink,line width=.6pt] (v2)--(v3);
\draw[blue!75!ink,line width=.6pt] (v2)--(v4);
\draw[blue!75!ink,line width=.6pt] (v3)--(v4);
\draw[ink,line width=.9pt] (v0)--(v1)--(v2)--(v3)--cycle;
\fill[blue] (v0) circle(1.45pt);
\node[below left,inner sep=1.3pt,text=ink] at (v0) {$ 0 $};
\fill[blue] (v1) circle(1.45pt);
\node[below right,inner sep=1.3pt,text=ink] at (v1) {$ 1 $};
\fill[blue] (v2) circle(1.45pt);
\node[above right,inner sep=1.3pt,text=ink] at (v2) {$ 2 $};
\fill[blue] (v3) circle(1.45pt);
\node[above left,inner sep=1.3pt,text=ink] at (v3) {$ 3 $};
\fill[blue] (v4) circle(1.45pt);
\node[above,inner sep=1.3pt,text=ink] at (v4) {$ 4 $};
\path[use as bounding box] (-.45,-.45) rectangle (6.45,6.45000000);
\end{tikzpicture}
\caption{$g_v=4,\quad\dim S_7^2=L_7^2=66$}\label{fig:u7local-2}
\end{subfigure}

\caption{Local direction changes on the same four-triangle fan. Both realizations attain their actual lower bounds; the different local direction types exclude this dimension difference from $\JJ$.}\label{fig:u7local}
\end{figure}
Example~\ref{ex:u7local} explains why local data must be fixed. The only interior vertex changes from two directions to four, so $(N_2,N_3)$ changes from $(1,0)$ to $(0,0)$. The entire dimension difference of three comes from $3N_2+N_3$. Both defects $\delta_7^L$ are zero, so this is not $\JJ$: each realization still satisfies its own formula exactly. Studying $\JJ$ requires equal local direction types vertex by vertex, excluding changes of local contributions.
\FloatBarrier

\section{Generic phases and their threshold}\label{sec:generic}
Generic behavior is considered for each fixed combinatorial triangulation as its vertex coordinates vary. We first give configurations with generic excess above the lower bound and then establish attainment in degree five and higher.

\subsection{Generic failure in low degrees}\label{sec:generic-failure}
The following cubic and quartic examples illustrate the chord and boundary-pair constructions discussed by \citet[Section~7]{DiPasqualeVillamizar21}. We determine their generic dimensions and give nonzero coordinate polynomials on which the strict inequalities hold.

\begin{theorem}[Cubic and quartic generic failure]\label{thm:gf}
Relative to Schumaker's lower bound with actual local corrections, $(3,2),(4,2)\in\GF$.
\end{theorem}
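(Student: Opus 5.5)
By Theorem~\ref{thm:generic-failure-criterion}\textup{(iii)}, it suffices to give, for each $d\in\{3,4\}$, one fixed abstract triangulation $\Delta$ and one nonzero coordinate polynomial $H$ with $\delta_d^L(\Delta,p)>0$ whenever $H(p)\ne0$. For both degrees the plan is to exhibit an explicit subspace of $S_d^2(\Delta,p)$ whose dimension exceeds $L_d^2$ and persists at generic realizations. I will also take $H$ to include the factor $D_\Delta$ from the proof of Theorem~\ref{thm:generic-failure-criterion}. That factor forces $g_v=\deg_\Delta(v)$ at every interior vertex, so $L_d^2$ has the fixed value computed from Table~\ref{tab:lower} with the generic $N_2,N_3$.

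\textbf{Cubic case (chord construction).} Degree three is the easy case, because global polynomials already beat the bound whenever $I>(b-3)+N_3+2N_2$. I will take a triangulation with $b=3$, $N_2=N_3=0$ and $I\ge1$, all interior vertices of valence at least four. A concrete choice is a triangle with two interior vertices, each of valence at least four, so that $L_3^2=10-I<10\le\dim S_3^2$. For this triangulation we may take $H=D_\Delta$. A fallback is the chord-type configuration of \citet{DiPasqualeVillamizar21}: a boundary-to-boundary interior edge $e$ splits the disk, and $\ell_e^3$ placed on one side gives a spline outside $\PP_3$. This produces an extra dimension at every realization, and the only check is to confirm that the count still beats $L_3^2$.

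\textbf{Quartic case (boundary-pair construction).} Here $L_4^2=15+3(b-3)+N_3+3N_2$ does not depend on $I$, so global polynomials alone are not enough. I will use the boundary-pair construction. Take an interior edge $e=(a,c)$ whose endpoints are both boundary vertices, arranged so that $\ell_e$ cuts $\Omega$ into two pieces and the cofactor $q_e\in\PP_1$ is unconstrained. I will build the triangulation so that several such chords occur, or combine chords with a small number of interior vertices. Each chord contributes $\binom{3}{2}=3$ independent truncated splines $(\ell_e)_+^3q_e$. The spline $\ell_e^3q_e$ is supported on one side of the chord, and its compatibility at interior vertices is vacuous because the chord avoids them. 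These splines, together with $\PP_4$, span a space whose dimension I will compute as $15+3c'$, where $c'$ counts the chords. This must exceed $15+3(b-3)$ plus the local terms.

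The main obstacle is exactly this balance. Chords increase $b$, and $b$ enters $L_4^2$ with coefficient $3$, so a naive count may only tie the bound. I therefore expect to need the rank formula \eqref{eq:polynomial-constraint-rank}, with the rank of $C_4(p)$ computed at one rational realization, say by exact arithmetic, and then transported to generic $p$. The cleanest way to do this transport is to take $H=D_\Delta\cdot G$, where $G$ is the product of all maximal minors of $C_4(p)$ of size $\rho+1$ (showing $\rank C_4\le\rho$ everywhere suffices). Alternatively, I will bound $\rank C_4(p)$ above by an explicit dependency among the vertex conditions that holds identically in $p$. The dependency would come from the boundary-pair geometry, for instance a cancellation between the two edges joining a pair of boundary vertices through an interior vertex. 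That identity gives $\dim S_4^2\ge L_4^2+1$ on the open set where $D_\Delta\ne0$.
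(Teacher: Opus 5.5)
Your reduction to criterion (iii) is fine, and so is your cubic argument. The polynomial-floor route is simpler than the paper's chord example, whose extra spline matters only for excess over $\max\{P_3,L_3^2\}$. Your concrete instance, however, does not exist. A triangle with two interior vertices has six interior edges, none joining two boundary vertices, so the interior valences are $3$ and $4$. This is harmless: $L_3^2=9<10$ there anyway, or you can use the Morgan--Scott triangulation \eqref{eq:msfaces}, where $L_3^2=7$.

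The quartic case has a genuine gap. What you describe is the chord construction, not the boundary-pair construction, and chords can never beat $L_4^2$. A chord's three cofactor dimensions are already counted by the term $E\binom{3}{2}$ in \eqref{eq:schumaker}. Chords are pairwise noncrossing diagonals of the boundary $b$-gon, so $c'\le b-3$ and hence $15+3c'\le 15+3(b-3)\le L_4^2$ always.

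Your first fallback runs semicontinuity backwards. The rank of $C_4$ at one realization is only a lower bound for the generic rank, so it gives an upper bound for the generic dimension, not the lower bound you need. A nonzero product $G$ of $(\rho+1)$-minors certifies $\rank C_4(p)\ge\rho+1$, which is the opposite of what you want. If instead those minors vanish identically, then $G\equiv0$ is inadmissible, and the vanishing has to be proved symbolically rather than at one point.

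Your second fallback is the right kind of idea, but it is not supplied. The hinted cancellation between two edges is also impossible: $\ell_1^3q_1=\ell_2^3q_2$ with distinct lines and linear $q_i$ forces $q_1=q_2=0$.

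The missing ingredient is a syzygy among three concurrent cubes at an interior vertex of valence at least four, all three of whose edges end on the boundary. The paper's triangulation has boundary vertices $0,1,2$, with vertex $0$ of valence three and adjacent to the interior vertex $4$ of valence five. Write $\ell_{04}=X+Y$ with $X=\alpha\ell_{14}$ and $Y=\beta\ell_{24}$. The identity
\[
X^3(X+2Y)-Y^3(2X+Y)=(X-Y)(X+Y)^3
\]
defines a $C^2$ quartic supported on the faces $(0,4,1)$ and $(2,4,0)$. It exists at every realization with $D_\Delta\ne0$ and is not a global polynomial. There, $b=3$ and all interior valences are at least four, so $L_4^2=15$. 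Hence $\dim S_4^2\ge16>L_4^2$ whenever $D_\Delta\ne0$, and no rank computation is needed.
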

\begin{proof}
Examples~\ref{ex:gf3} and~\ref{ex:gf4} give fixed triangulations and nonzero coordinate polynomials whose nonvanishing sets have strictly positive defects. The conclusion follows from Theorem~\ref{thm:generic-failure-criterion}.
\end{proof}
Examples~\ref{ex:gf3} and~\ref{ex:gf4} witness these assertions. Both generic dimensions exceed the respective $L_d^2$. We give configurations, realizations, dimensions, and explicit witness polynomials. For vertices $v,i,j$, write $[vij]=(x_i-x_v)(y_j-y_v)-(y_i-y_v)(x_j-x_v)$. For a vertex-index set $A$, let $D_v(A)=\prod_{i<j,\ i,j\in A}[vij]$. The condition $D_v(A)\ne0$ says that the lines from $p_v$ to these vertices are pairwise distinct. For an edge $e=(i,j)$, let $\ell_e=0$ be a nonzero affine equation of its supporting line; an edge subscript $ij$ is shorthand for $(i,j)$.

\begin{example}[A seven-vertex cubic generic-excess configuration]\label{ex:gf3}
The boundary cycle is $(0,6,1,2)$ and the interior vertices are $3,4,5$. The eight faces are
\begin{equation}\label{eq:gf3faces}
\begin{gathered}
(3,4,5),(0,1,3),(1,2,4),(2,0,5),\\
(0,3,5),(1,4,3),(2,5,4),(1,0,6).
\end{gathered}
\end{equation}
The interior edge $(0,1)$ is a chord: both endpoints lie on the boundary.
Choose the realization
\[
\begin{aligned}
p_0&=(50,0),&p_1&=(0,50),&p_2&=(0,0),\\
p_3&=(21,20),&p_4&=(10,20),&p_5&=(20,10),\\
p_6&=(50,50).
\end{aligned}
\]
Here $(I,E,b)=(3,10,4)$; each interior vertex has four supporting lines, so $N_2=N_3=0$. The dimensions are
\[
\dim S_3^2=11,\qquad L_3^2=8.
\]
Writing $\Delta_3$ for this triangulation, an explicit witness polynomial is
\[
H_{\Delta_3,3}(p)=D_3(\{0,1,4,5\})D_4(\{1,2,3,5\})D_5(\{0,2,3,4\}).
\]
It is nonzero at these coordinates. Every realization with $H_{\Delta_3,3}(p)\ne0$ has the same dimension and lower bound, and hence witnesses generic cubic failure.
See Figure~\ref{fig:gf}\subref{fig:gf3}.
\end{example}
\begin{proof}
Suppose $H_{\Delta_3,3}(p)\ne0$. Cubic $C^2$ matching assigns a constant cofactor $q_e$ to each interior edge, with adjacent pieces differing by $\ell_e^3q_e$. At an interior vertex the compatibility equation is $\sum_{e\ni v}\epsilon_{ve}\ell_e^3q_e=0$, where $\epsilon_{ve}\in\{1,-1\}$ are coherent vertex-edge incidence signs. The four incident line forms have distinct directions. Their cubes are linearly independent in the four-dimensional space of binary cubics, as follows from the Vandermonde determinant after choosing a local coordinate chart. The compatibility equation therefore forces all four incident cofactors to vanish. The three interior vertices together meet all nine nonchord interior edges, leaving only $q_{01}$ free, since the chord has two boundary endpoints.

This remaining freedom is realized by the spline $s$ that equals $\ell_{01}^3$ on $(1,0,6)$ and zero on all other faces. Its jumps are divisible by $\ell_{01}^3$, so it is $C^2$. Every spline is the sum of a global cubic and a scalar multiple of $s$, and $s$ is not a global polynomial. Thus $\dim S_3^2=P_3+1=11$. Substituting $I=3$, $b=4$, and $N_2=N_3=0$ into Table~\ref{tab:lower} gives $L_3^2=8$. Since the displayed realization makes $H_{\Delta_3,3}$ nonzero, this proves $(3,2)\in\GF$.
\end{proof}
\begin{figure}[!htb]\centering
\begin{subfigure}[t]{.47\linewidth}\centering
\begin{tikzpicture}[scale=.78,line cap=round,every node/.style={font=\footnotesize}]
\coordinate (v0) at (6,0);
\coordinate (v1) at (0,6);
\coordinate (v2) at (0,0);
\coordinate (v3) at (2.52,2.4);
\coordinate (v4) at (1.2,2.4);
\coordinate (v5) at (2.4,1.2);
\coordinate (v6) at (6,6);
\draw[blue!75!ink,line width=.6pt] (v0)--(v3);
\draw[blue!75!ink,line width=.6pt] (v0)--(v5);
\draw[blue!75!ink,line width=.6pt] (v1)--(v3);
\draw[blue!75!ink,line width=.6pt] (v1)--(v4);
\draw[blue!75!ink,line width=.6pt] (v2)--(v4);
\draw[blue!75!ink,line width=.6pt] (v2)--(v5);
\draw[blue!75!ink,line width=.6pt] (v3)--(v4);
\draw[blue!75!ink,line width=.6pt] (v3)--(v5);
\draw[blue!75!ink,line width=.6pt] (v4)--(v5);
\draw[blue!75!ink,line width=.6pt] (v0)--(v1);
\draw[ink,line width=.9pt] (v0)--(v6)--(v1)--(v2)--cycle;
\fill[blue] (v0) circle(1.45pt);
\node[below right,inner sep=1.3pt,text=ink] at (v0) {$0$};
\fill[blue] (v1) circle(1.45pt);
\node[above,inner sep=1.3pt,text=ink] at (v1) {$1$};
\fill[blue] (v2) circle(1.45pt);
\node[below left,inner sep=1.3pt,text=ink] at (v2) {$2$};
\fill[blue] (v3) circle(1.45pt);
\node[above right,inner sep=1.3pt,text=ink] at (v3) {$3$};
\fill[blue] (v4) circle(1.45pt);
\node[above left,inner sep=1.3pt,text=ink] at (v4) {$4$};
\fill[blue] (v5) circle(1.45pt);
\node[below right,inner sep=1.3pt,text=ink] at (v5) {$5$};
\fill[blue] (v6) circle(1.45pt);
\node[above right,inner sep=1.3pt,text=ink] at (v6) {$6$};
\path[use as bounding box] (-.45,-.45) rectangle (6.45,6.45);
\end{tikzpicture}
\caption{Cubic: $\dim S_3^2=11,\quad L_3^2=8$}\label{fig:gf3}
\end{subfigure}\hfill
\begin{subfigure}[t]{.47\linewidth}\centering
\begin{tikzpicture}[scale=.78,line cap=round,every node/.style={font=\footnotesize}]
\coordinate (v0) at (3.0,6.0);
\coordinate (v1) at (0.0,0.0);
\coordinate (v2) at (6.0,0.0);
\coordinate (v3) at (2.8,0.8);
\coordinate (v4) at (3.0,3.4);
\coordinate (v5) at (2.2,1.4);
\coordinate (v6) at (3.8,1.4);
\draw[blue!75!ink,line width=.6pt] (v0)--(v1);
\draw[blue!75!ink,line width=.6pt] (v0)--(v2);
\draw[blue!75!ink,line width=.6pt] (v0)--(v4);
\draw[blue!75!ink,line width=.6pt] (v1)--(v2);
\draw[blue!75!ink,line width=.6pt] (v1)--(v3);
\draw[blue!75!ink,line width=.6pt] (v1)--(v4);
\draw[blue!75!ink,line width=.6pt] (v1)--(v5);
\draw[blue!75!ink,line width=.6pt] (v2)--(v3);
\draw[blue!75!ink,line width=.6pt] (v2)--(v4);
\draw[blue!75!ink,line width=.6pt] (v2)--(v6);
\draw[blue!75!ink,line width=.6pt] (v3)--(v5);
\draw[blue!75!ink,line width=.6pt] (v3)--(v6);
\draw[blue!75!ink,line width=.6pt] (v4)--(v5);
\draw[blue!75!ink,line width=.6pt] (v4)--(v6);
\draw[blue!75!ink,line width=.6pt] (v5)--(v6);
\draw[ink,line width=.9pt] (v0)--(v1)--(v2)--cycle;
\fill[blue] (v0) circle(1.45pt);
\node[above,inner sep=1.3pt,text=ink] at (v0) {$ 0 $};
\fill[blue] (v1) circle(1.45pt);
\node[below left,inner sep=1.3pt,text=ink] at (v1) {$ 1 $};
\fill[blue] (v2) circle(1.45pt);
\node[below right,inner sep=1.3pt,text=ink] at (v2) {$ 2 $};
\fill[blue] (v3) circle(1.45pt);
\node[below,inner sep=1.3pt,text=ink] at (v3) {$ 3 $};
\fill[blue] (v4) circle(1.45pt);
\node[above right,inner sep=1.3pt,text=ink] at (v4) {$ 4 $};
\fill[blue] (v5) circle(1.45pt);
\node[above left,inner sep=1.3pt,text=ink] at (v5) {$ 5 $};
\fill[blue] (v6) circle(1.45pt);
\node[above right,inner sep=1.3pt,text=ink] at (v6) {$ 6 $};
\path[use as bounding box] (-.45,-.45) rectangle (6.45,6.45000000);
\end{tikzpicture}
\caption{Quartic: $\dim S_4^2=16,\quad L_4^2=15$}\label{fig:gf4}
\end{subfigure}
\caption{Cubic and quartic generic-excess configurations from Examples~\ref{ex:gf3} and~\ref{ex:gf4}. Both generic dimensions strictly exceed their actual lower bounds.}\label{fig:gf}
\end{figure}

\begin{remark}[Relation to earlier low-degree constructions]\label{rem:polynomial-baseline}
\citet[Section~7]{DiPasqualeVillamizar21} discuss the chord and boundary-pair constructions in connection with generic equality to the polynomial-floor-adjusted bound
\[
B_d^2=\max\{P_d,L_d^2\}.
\]
Examples~\ref{ex:gf3} and~\ref{ex:gf4} provide such witnesses and determine their generic dimensions. The cubic example satisfies $L_3^2=8<P_3=10<\dim S_3^2=11$, and the quartic example satisfies $L_4^2=P_4=15<\dim S_4^2=16$. Thus both constructions also exhibit strict excess over $B_d^2$. In the cubic example, two of the three dimensions above $L_3^2$ are forced by $P_3$, and one remains after this floor is included.

\end{remark}

\begin{example}[A seven-vertex quartic generic-excess configuration]\label{ex:gf4}
The boundary vertices are $0,1,2$ and the interior vertices are $3,4,5,6$. The nine faces are
\[
\begin{gathered}
(1,4,5),(6,5,3),(6,5,4),(2,6,3),(0,4,1),\\
(1,3,2),(1,3,5),(2,4,0),(2,4,6).
\end{gathered}
\]
The faces $(0,4,1)$ and $(2,4,0)$ form a boundary pair: their common edge is $(0,4)$, and $(0,1)$ and $(0,2)$ are boundary edges.
Choose coordinates
\[
\begin{aligned}
p_0&=(15,30),&p_1&=(0,0),&p_2&=(30,0),\\
p_3&=(14,4),&p_4&=(15,17),&p_5&=(11,7),&p_6&=(19,7).
\end{aligned}
\]
Here $(I,E,b)=(4,12,3)$ and the interior valences are $(4,5,4,4)$. The dimension of this realization, also the generic dimension of this combinatorial type, is
\[
\dim S_4^2=16,\qquad L_4^2=15.
\]
Writing $\Delta_4$ for this triangulation, an explicit witness polynomial is
\[
\begin{aligned}
H_{\Delta_4,4}(p)={}&D_3(\{1,2,5,6\})D_4(\{0,1,2,5,6\})\\
&\cdot D_5(\{1,3,4,6\})D_6(\{2,3,4,5\}).
\end{aligned}
\]
It is nonzero at these coordinates. Every realization with $H_{\Delta_4,4}(p)\ne0$ satisfies
\[
\dim S_4^2\ge16>15=L_4^2.
\]
This strict inequality suffices to witness generic quartic failure.
See Figure~\ref{fig:gf}\subref{fig:gf4}.
\end{example}
\begin{proof}
For ordered vertices $i,j$, take $\ell_{ij}(z)=\det(p_j-p_i,z-p_i)$, and orient each edge from its smaller to its larger index. The boundary-pair construction can be written explicitly. Choose nonzero scalars $a,b$ so that $X=a\ell_{14}$, $Y=b\ell_{24}$, and $\ell_{04}=X+Y$. Define
\[
s|_{(0,4,1)}=X^3(X+2Y),\qquad
s|_{(2,4,0)}=Y^3(2X+Y),
\]
and set $s=0$ on the other faces. Across $(1,4)$ and $(2,4)$, the jumps have the required cubic line factors. Across $(0,4)$, use
\[
X^3(X+2Y)-Y^3(2X+Y)=(X-Y)(X+Y)^3.
\]
Thus $s\in S_4^2$, and it is not a global polynomial because it is nonzero but vanishes on another face. Consequently $\dim S_4^2\ge16$. On $H_{\Delta_4,4}\ne0$, Table~\ref{tab:lower} gives $L_4^2=15$, proving $(4,2)\in\GF$. A further calculation gives the exact generic dimension $16$.
\end{proof}

\FloatBarrier

\subsection{Generic quintic attainment and the full generic phase}\label{sec:generic-attainment}
The behavior changes in degree five: every fixed combinatorial triangulation generically attains the bound. The quintic and sextic results in Appendix~\ref{app:g5}, together with the universal range in Section~\ref{sec:universal}, determine the full generic-attainment range.

\begin{theorem}[Generic quintic attainment]\label{thm:generic}
For every fixed finite abstract disk triangulation $\Delta$, there is a nonzero coordinate polynomial $H_{\Delta,5}$ such that every $p\in\mathcal R_\Delta$ with $H_{\Delta,5}(p)\ne0$ satisfies
\[
\dim S_5^2(\Delta,p)=L_5^2(\Delta,p).
\]
\end{theorem}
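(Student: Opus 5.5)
By Theorem~\ref{thm:generic-failure-criterion} it suffices to show that the generic defect $\delta_{\Delta,5}^{\mathrm{gen}}$ vanishes for every fixed $\Delta$. Since the defect is constant on the nonvanishing set of $Q_{\Delta,5}$, it is enough to find, for each $\Delta$, \emph{one} legal realization $p_*$ with $D_\Delta(p_*)\ne0$ and $\rank C_5(p_*)$ equal to $M-L_5^2+P_5$. Lower semicontinuity of the defect in the rank formulation \eqref{eq:polynomial-constraint-rank} then propagates this to a Zariski-open set. At such a $p_*$ all incident lines are distinct, so $N_2=N_3=0$ except at valence-three vertices, and Table~\ref{tab:lower} gives the target $\dim S_5^2=21+3I+6(b-3)+N_3$. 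Equivalently, I must show that the cofactor compatibility map
\[
\bigoplus_{e\in E^\circ}\PP_2\longrightarrow\bigoplus_{v\in V^\circ}\PP_5,\qquad (q_e)\mapsto\Bigl(\sum_{e\ni v}\epsilon_{ve}\ell_e^3q_e\Bigr)_v,
\]
has image of the codimension predicted by the local contributions. In other words, the $I$ local ideal targets $J_v=(\ell_e^3:e\ni v)_{5}$ impose independent conditions up to the forced local corrections. Dually, I will show that no nonzero row relation $(\lambda_v)_v$, with $\lambda_v\in(\PP_5)^*$ vanishing on the local syzygy-free part, annihilates all edge contributions.

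The plan is an induction on $I$ by vertex removal, in the style of Alfeld--Schumaker and Hong, but exploiting genericity. First, I will reduce to triangulations with no chords and no boundary pairs. A chord splits $\Delta$ into two disks, and I will check that the lower bound is additive under this splitting: the chord contributes $\binom{4}{2}=6$ cofactor unknowns, and the gluing subtracts $P_5$, with the count matching $6(b-3)$. A degree-five analogue of the quartic boundary-pair obstruction of Example~\ref{ex:gf4} should not exist, because $d=5>2r$ leaves enough cofactor room. Second, I will choose an interior vertex $v$ adjacent to the boundary and remove its star, forming $\Delta'$. Dimension counting gives $\dim S_5^2(\Delta)\le\dim S_5^2(\Delta')+(\text{local count at }v)$. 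The local count is exactly $P_5-6$ minus the edges' $6$ unknowns each, plus $\sigma_5(g_v)$, provided the new edge cofactors restricted to the shared boundary vertices impose independent conditions. Third, for the base and the independence check, I will specialize $p_*$ to a convenient position, for instance placing $v$ so that its link vertices are in general position while keeping $\Delta'$ generic by induction. In that position the relevant matrix is block-triangular with a Vandermonde-type block from the powers $\ell_e^3$, $e\ni v$.

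I expect the main obstacle to be the independence step when $v$ has many neighbours already constrained by interior vertices of $\Delta'$. There the conditions on the edge cofactors are degree-$5$ values and derivatives at up to two boundary vertices of $\Delta'$, and these could be dependent, which is exactly the mechanism behind the quartic failure. To handle it, I will first try an ordering (a shelling of the dual graph) in which each removed vertex meets the remaining interior region in a path of at most two edges. I will then verify, by an explicit $6\times6$ determinant computation in binary quintics at a specialized generic position, that two adjacent constrained vertices impose at most $P_5-3\cdot 6$ conditions in total. If that fails, the fallback is the dual approach previewed for the septic case: take a hypothetical row relation, pass to its scalar support, and show that the support is a star whose cyclic cubic identities are non-degenerate for generic coordinates. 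This uses the extra freedom that genericity affords over the septic universal argument.
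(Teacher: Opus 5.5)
Your opening reduction is sound and follows the same logic as the paper: one ordinary realization at which the bound is attained, plus a nonzero minor, gives generic attainment. (Strictly, it is $\rank C_5$ that is lower semicontinuous, so the defect can only jump up on a proper algebraic subset.) Your chord additivity also checks out.

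The genuine gap is the step you flag yourself: re-attaching the star of $v$. Your inductive hypothesis is only the dimension formula for $\Delta'$, i.e.\ surjectivity of its $\Phi_5$. That says nothing about which values and jets the cofactors of $\Delta'$ can take at the link vertices of $v$. Those vertices are boundary vertices of $\Delta'$ but interior in $\Delta$, so their restored compatibility equations couple exactly these uncontrolled quantities to the new cofactors. The proviso that ``the new edge cofactors restricted to the shared boundary vertices impose independent conditions'' is therefore the whole content of the theorem, and nothing in the plan proves it:
\begin{enumerate}
\item The ordering you would first try is not shown to exist, and nothing handles a star meeting the remaining region along a longer path.
\item The $6\times6$ determinant and the count $P_5-3\cdot6$ are not attached to a precise statement.
\item Removing the closed star of a boundary-adjacent vertex leaves a disk only if its link meets $\partial\Omega$ in a single arc containing an edge; otherwise the domain is disconnected or pinched.
\item The boundary-pair ``reduction'' is a heuristic, not a decomposition.
\end{enumerate}

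The fallback does not transfer either. The septic dual argument relies on uniqueness of face potentials, which uses that every exponent triple of total degree seven has an entry at least three. In degree five the monomials of type $(2,2,1)$ are invisible to all three edge traces. That argument is also universal, while universal attainment fails at $d=5$, and you do not say where genericity would enter.

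What is missing is a stronger inductive invariant and a degeneration that localizes the insertion step. The paper keeps one quadratic cofactor on every edge, boundary edges included. It records each cofactor's two endpoint values together with the degree-four and degree-five jets at interior vertices. Since the cubic jet at $v$ depends only on the values $q_e(v)$, surjectivity of this enhanced matrix $\mathcal B_\Delta$ implies surjectivity of $\Phi_5$.

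The paper also never shrinks the domain. With triangular outer boundary, the curvature identity gives an interior vertex of valence three, four, or five. That vertex is deleted and its link fan-retriangulated; after induction, it is restored at $w=a+t\mathbf d$ near the fan root $a$. As $t\to0$, after invertible renormalizations, the enhanced matrix fits into an exact diagram with the old enhanced map and an explicit local map $\Gamma:\PP_2^3\to\HH_4\oplus\HH_5\oplus\RR^6$. Surjectivity of $\Gamma$ reduces to an $11\times11$ determinant equal to $120\alpha\beta\ne0$, and a nonzero full-row minor then persists for small $t>0$. General boundaries are handled by completing to a triangle and restricting rows, so no chord or boundary-pair analysis is needed. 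This collision limit is what actually produces the block-triangular structure you hoped to obtain from a ``convenient position''.
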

The proof is in Appendix~\ref{app:g5}. For each combinatorial triangulation, it constructs a full-rank realization, retaining one quadratic cofactor per physical edge. Endpoint enhancement, local surjectivity, and geometric induction establish the all-triangulation statement.

\begin{theorem}[The generic phase and its threshold]\label{thm:gthreshold}
On the $C^2$ slice with $d\ge3$,
\[
\GF=\{(3,2),(4,2)\},\qquad
\GG=\{(d,2):d\ge5\}.
\]
Here $g(2)$ is the least $d_0\ge3$ such that $(d,2)\in\GG$ for every $d\ge d_0$; hence $g(2)=5$. On the same generic realizations as in Theorem~\ref{thm:generic}, the bound is attained for all $d\ge5$.
\end{theorem}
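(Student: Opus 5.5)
The statement assembles results already in hand, so the plan is to collect them degree by degree and then treat the claim about common generic realizations separately. For $d\in\{3,4\}$, Theorem~\ref{thm:gf} gives $(3,2),(4,2)\in\GF$. For $d=5$, Theorem~\ref{thm:generic} gives generic attainment with the polynomial $H_{\Delta,5}$. For $d=6$, I would invoke the sextic counterpart proved in Appendix~\ref{app:g5}. For $d\ge7$, universal attainment (Theorem~\ref{thm:universal} together with Hong's theorem for $d\ge8$) holds on every realization, and $\UU\subseteq\GG$ by Remark~\ref{rem:phase-relations}; any nonzero polynomial, for example $H=1$, serves as the witness. Since $\GF$ is by definition the complement of $\GG$ on the slice $d\ge3$, this yields $\GF=\{(3,2),(4,2)\}$ and $\GG=\{(d,2):d\ge5\}$. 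Then $g(2)=5$: every $d\ge5$ lies in $\GG$, while $d_0=4$ or $3$ fails because $(4,2)\in\GF$.

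The remaining assertion is that the realizations with $H_{\Delta,5}(p)\ne0$ already attain the bound for every $d\ge5$. For $d\ge7$ there is nothing to prove, since attainment is universal. For $d=6$, I would derive sextic attainment from quintic attainment on the same realization instead of using an independent sextic witness. The natural route is the cofactor formulation \eqref{eq:polynomial-constraint-rank}: attainment at degree $d$ says that the local compatibility map from edge cofactors in $\PP_{d-3}$ to the vertex targets has the rank predicted by Schumaker's count. I would show that full rank propagates from degree $d$ to degree $d+1$ on a fixed realization. The tool is a graded argument: multiply by a generic linear form and compare the homology of the Billera--Schenck--Stillman chain complex. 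Equivalently, show that the relevant lower homology module, which measures the defect, is generated in degree at most five, so that its vanishing in degree five forces vanishing in degree six.

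Alternatively, if the appendix's sextic proof is carried out by the same full-rank construction with the same nondegeneracy conditions, I would simply verify that its witness polynomial divides, or is implied by, $H_{\Delta,5}$. The theorem's last sentence suggests this is the intended reading.

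The main obstacle is this degree-propagation step at $d=6$. Homology of spline complexes need not vanish monotonically in degree for arbitrary realizations; this is exactly why universal attainment at $d=6$ is left open. I would therefore first check whether Appendix~\ref{app:g5} proves the sextic case under the same nonvanishing condition. If not, I would try to prove that on realizations with $H_{\Delta,5}\ne0$ every interior vertex is in generic local position, and that the global defect is controlled by the local ideals in a way that increases in degree: a surjectivity statement of the form ``local surjectivity at degree $5$ implies it at degree $6$ after multiplication by $\ell^{\,}$-forms.''
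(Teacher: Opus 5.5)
Your degree-by-degree assembly is the paper's proof. It uses Examples~\ref{ex:gf3} and~\ref{ex:gf4} (through Theorem~\ref{thm:gf}) for $d=3,4$, Appendix~\ref{app:g5} for $d=5,6$, and Theorem~\ref{thm:universal} with Hong's theorem for $d\ge7$. The one step you leave open is sextic attainment on the same realizations. You call it the main obstacle, but it follows in one line from quintic surjectivity, and this is how the proof of Theorem~\ref{zhg5:final} handles it.

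Since $\PP_3=\PP_2+x\PP_2+y\PP_2$, every target $F_v=\sum_{e\ni v}\epsilon_{v,e}\ell_e^3q_{v,e}\in U_{v,6}$ with $q_{v,e}\in\PP_3$ splits, vertex by vertex, as $F_v^{0}+xF_v^{1}+yF_v^{2}$ with each $F_v^{i}\in U_{v,5}$. If $\Phi_5$ is surjective at $p$, lift each tuple $(F_v^{i})_v$ to cofactors $(q_e^{i})_e$ and set $q_e=q_e^0+xq_e^1+yq_e^2$. Then $\Phi_6(q)_v=\Phi_5(q^0)_v+x\Phi_5(q^1)_v+y\Phi_5(q^2)_v=F_v$. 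So $\Phi_6$ is surjective wherever $\Phi_5$ is, in particular wherever $D_\Delta F_\Delta\ne0$, and the cofactor count gives $\dim S_6^2=L_6^2$ there. No local-position input beyond quintic surjectivity is needed, so your fallback plan is unnecessary.

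In your homological language, this is exactly the criterion you wrote down, and it holds trivially. The defect module $\coker\Phi$ (on a disk, the lower homology of the spline chain complex) is a quotient of $\bigoplus_v J_v$. That module is generated by the cubes $\ell_e^3$ in degree three, so $(\bigoplus_v J_v)_{d+1}$ is spanned by $x,y,z$ times $(\bigoplus_v J_v)_d$ for $d\ge3$. Hence vanishing of the defect in any degree $d\ge3$ forces vanishing in all higher degrees, on every realization.

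Your remark that this homology ``need not vanish monotonically in degree,'' and that this is why universal sextic attainment is open, is therefore mistaken. Monotonicity does hold, but only upward. Universal attainment in degree seven gives no information about degree six, and that downward direction is what Question~\ref{q:s6} leaves open.
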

\begin{proof}
Appendix~\ref{app:g5} gives attainment in degrees five and six under the same polynomial condition. Theorem~\ref{thm:universal} and Hong's theorem \citep{Hong91} cover all $d\ge7$, while Examples~\ref{ex:gf3} and~\ref{ex:gf4} exclude generic attainment in degrees three and four.
\end{proof}
\noindent The generic phase describes dimension at general coordinates; it does not exclude the special geometric jumps in the next section.

\section{Geometric jumps with fixed local data}\label{sec:jumps}
We compare pairs of realizations with the same combinatorics and vertexwise local direction types. Their actual lower bounds agree, so different dimensions reflect different global defects.

\subsection{Results and data conventions}
We first summarize the jump results and specify the conventions used to present the realizing pairs.

The \emph{full orientation-sign data} are the signs of the determinants $[ijk]$ for all ordered triples of distinct vertices, including zero signs; the determinant $[ijk]$ was defined in Section~\ref{sec:generic-failure}.

\begin{theorem}[Geometric jumps in degrees three to five]\label{thm:jumps}
The pairs $(3,2),(4,2),(5,2)$ all belong to $\JJ$. Quartic and quintic jumps occur even between realizations with identical full orientation-sign data.
\end{theorem}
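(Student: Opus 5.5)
The theorem is existential, so I would prove it by exhibiting, for each degree $d\in\{3,4,5\}$, one fixed abstract triangulation $\Delta_d$ and two legal realizations $p,p'$ of it. The two realizations must have the same local direction type $g_v\in\{2,3,\ge4\}$ at every corresponding interior vertex, and different $\dim S_d^2$. By Remark~\ref{rem:fixed-local-data} the actual lower bounds $L_d^2(\Delta_d,p)$ and $L_d^2(\Delta_d,p')$ then coincide, so it suffices to show $\dim S_d^2(\Delta_d,p)\neq\dim S_d^2(\Delta_d,p')$. All coordinates will be rational. Each dimension will be computed exactly through \eqref{eq:polynomial-constraint-rank} as $P_d+E\binom{d-1}{2}-\rank C_d(p)$, with exact rational arithmetic on the cofactor compatibility matrix. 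For the quartic and quintic pairs I would also list the signs of all $[ijk]$ at both realizations and verify that they agree, including zero signs.

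\emph{Quartic case.} I would use the Morgan--Scott configuration (an outer triangle, an inner triangle, and the six connecting edges), as indicated in Figure~\ref{fig:phase}, with the degree-$2r$ instability of \citet{Diener90}.
\begin{itemize}
\item The generic realization should have $\dim S_4^2=L_4^2$.
\item The special realization should satisfy the concurrence condition on auxiliary lines of \citet{WuZhengDeng26}, which produces one extra dimension.
\item To keep the orientation signs fixed, I would choose the special position as a small rational perturbation along the singular locus, chosen so that no determinant $[ijk]$ changes sign. This is possible because the singular locus is a hypersurface that meets the open sign cell.
\item The extra spline in the special position can be written explicitly, so a short certificate is possible rather than only a rank computation.
\end{itemize}

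\emph{Cubic case.} I would adapt the $C^1$ cubic mechanism of \citet{WuDeng26}. Take a triangulation in which most interior vertices have $g_v\ge4$. By the Vandermonde argument in the proof of Example~\ref{ex:gf3}, the cofactors at such vertices are forced to vanish. Keep a few vertices with exactly three incident supporting directions, where cubes of three binary linear forms leave a constrained freedom. A global collinearity or concurrence condition that leaves every $g_v$ unchanged then allows one additional compatible cofactor tuple. The dimension is computed by propagating the forced zeros through the configuration, with a rank computation for the remaining small system.

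\emph{Quintic case.} This is the main obstacle, because Theorem~\ref{thm:generic} guarantees that generic positions attain the bound, so the excess must come from a delicate special geometry. My first attempt would be a Yuan--Stillman-type configuration \citep{YuanStillman19}, which exhibits excess for $S_5^2$ on a specific geometric triangulation. The plan is:
\begin{enumerate}
\item Identify the polynomial condition in the coordinates, a vanishing maximal minor of $C_5$, that creates the excess.
\item Find a rational point on that hypersurface in which every vertex keeps its direction type.
\item Choose a nearby generic rational point in the same sign cell, where $\rank C_5$ is maximal; by Theorem~\ref{thm:generic} this point attains $L_5^2$.
\end{enumerate}
Finding a rational point on the singular locus that keeps all signs can require a search or a rational parametrization of that locus, for example solving a condition that is linear in one coordinate. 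I expect this step to be the hardest.
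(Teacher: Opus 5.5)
\textbf{Verdict: genuine gap.} The quartic case is sound, but the cubic mechanism you describe would not produce a jump, and the quintic plan has two broken steps. Since the statement is existential, its proof consists of explicit witnesses with certified ranks, and you supply none for degrees three and five.

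\textbf{Framework and quartic case.} Your framework matches the paper's: explicit rational pairs on one triangle list, dimensions certified by exact $\rank C_d$ in \eqref{eq:polynomial-constraint-rank}, and a sign check. Your quartic plan is essentially Example~\ref{ex:j4}. At $p_3=(20,20)$ the lines $p_0p_4$, $p_1p_5$, $p_2p_3$ concur at $(50/3,50/3)$ and every $[ijk]$ is nonzero. Shifting $p_3$ to $(21,20)$ preserves all signs while the rank goes from $26$ to $27$.

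\textbf{Cubic case.} The Vandermonde argument of Example~\ref{ex:gf3} forces the constant cofactors to vanish only at a vertex of valence exactly four. Binary cubics form a four-dimensional space, so a vertex of valence $k$ with $g_v\ge4$ has a $(k-4)$-dimensional local cofactor kernel.
\begin{itemize}
\item At vertices with $g_v\le4$ the cubes of the distinct lines are independent. The local equations only link the two cofactors on opposite collinear edges and kill the rest.
\item So when every interior vertex has $g_v\le4$, as in your regime, $\dim S_3^2$ equals $10$ plus the number of straight boundary-to-boundary edge chains. This depends only on which incident edges are opposite.
\item Consequently a concurrence changes nothing, and a new collinearity at a vertex lowers its $g_v$, changing its type.
\end{itemize}
The continuous geometric coupling in degree three lives exactly at vertices with $g_v\ge5$, and Example~\ref{ex:j3} uses it.
\begin{itemize}
\item Vertices $6,7,8$ each have valence five with five distinct directions. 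Hence each has a one-dimensional local kernel with all entries nonzero, because any four cubes of distinct lines are independent.
\item The shared edges $67,68,78$ give a $3\times3$ homogeneous system for the three local scalars. Its determinant is a cyclic-product condition.
\item This determinant is nonzero at Realization~1 (rank $12$, $\dim=10=L_3^2$) and zero at Realization~2 (rank $11$, $\dim=11$), with $g_v=5$ throughout.
\end{itemize}

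\textbf{Quintic case.} The shape of your plan matches Example~\ref{ex:j5}; there the two realizations differ by dilating triangle $678$ about its centroid $(15,26/3)$ by the factor $85/83$. Two steps fail as written.
\begin{itemize}
\item A rank drop of $C_5$ requires all maximal minors to vanish simultaneously, not one. The excess locus is therefore determinantal rather than a hypersurface, and you need a low-dimensional family in which it becomes a single equation.
\item More seriously, Theorem~\ref{thm:generic} supplies a nearby comparison point with identical full sign data only if the special point has every $[ijk]\ne0$. If some sign is zero, every point with the same sign data lies on a proper subvariety, where that theorem says nothing.
\end{itemize}
This second issue is exactly the paper's situation. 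The three-sun is mirror-symmetric about $x=15$, so $p_2,p_3,p_8$ are collinear and $[2,3,8]=0$ in both realizations. Accordingly, $\dim S_5^2=39$ at Realization~1 is certified by the exact rank $90$, and $40$ at Realization~2 by rank $89$, not by genericity.

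Your proposal leaves the witness and these rank certificates to an unspecified search. It also does not show that a Yuan--Stillman-type excess occurs inside an open sign cell. As a result, $(3,2),(5,2)\in\JJ$ and the sign-preserving quintic claim remain unproved.
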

Examples~\ref{ex:j3}, \ref{ex:j4}, and~\ref{ex:j5} give the respective jump witnesses. Each pair has the \textbf{same triangle list} and equal local direction types vertex by vertex. All coordinates are exact rationals. We call the two members Realization~1 and Realization~2. Table~\ref{tab:jumps} lists the exact ranks of the cofactor compatibility matrices $C_d$ and the dimensions obtained from \eqref{eq:polynomial-constraint-rank}.

\subsection{Cubics: a ten-triangle hexagon}\label{sec:jump-cubic}
The first pair moves a single interior vertex of a hexagonal triangulation. Both realizations retain five distinct supporting directions at every interior vertex.

\begin{example}[A cubic jump with fixed local data]\label{ex:j3}
The boundary hexagon has coordinates
\[
\begin{aligned}
p_0&=(40,0),&p_1&=(0,40),&p_2&=(-40,40),\\
p_3&=(-40,0),&p_4&=(0,-40),&p_5&=(40,-40).
\end{aligned}
\]
The two interior coordinate sets are
\[
\begin{array}{c|ccc}
&p_6&p_7&p_8\\\hline
\text{Realization 1}&(1,10)&(-10,0)&(10,-10)\\
\text{Realization 2}&(0,10)&(-10,0)&(10,-10)
\end{array}
\]
The ten faces are
\[
\begin{gathered}
(6,7,8),(6,0,1),(6,1,2),(7,2,3),(7,3,4),\\
(8,4,5),(8,5,0),(6,7,2),(7,8,4),(8,6,0).
\end{gathered}
\]
Both realizations have $(I,E,b)=(3,12,6)$ and five distinct supporting lines at every interior vertex. Their dimensions are
\[
L_3^2=10,\qquad
\dim S_3^2(\text{Realization 1})=10,\qquad
\dim S_3^2(\text{Realization 2})=11.
\]
Realizations~1 and~2 appear in Figures~\ref{fig:j3}\subref{fig:j3-1} and~\ref{fig:j3}\subref{fig:j3-2}.
\end{example}
\begin{figure}[!htbp]\centering
\begin{subfigure}[t]{.47\linewidth}\centering
\begin{tikzpicture}[scale=.78,line cap=round,every node/.style={font=\footnotesize}]
\coordinate (v0) at (6.000000000,3.000000000);
\coordinate (v1) at (3.000000000,6.000000000);
\coordinate (v2) at (0.000000000,6.000000000);
\coordinate (v3) at (0.000000000,3.000000000);
\coordinate (v4) at (3.000000000,0.000000000);
\coordinate (v5) at (6.000000000,0.000000000);
\coordinate (v6) at (3.075000000,3.750000000);
\coordinate (v7) at (2.250000000,3.000000000);
\coordinate (v8) at (3.750000000,2.250000000);
\draw[blue!75!ink,line width=.6pt] (v0)--(v1);
\draw[blue!75!ink,line width=.6pt] (v0)--(v5);
\draw[blue!75!ink,line width=.6pt] (v0)--(v6);
\draw[blue!75!ink,line width=.6pt] (v0)--(v8);
\draw[blue!75!ink,line width=.6pt] (v1)--(v2);
\draw[blue!75!ink,line width=.6pt] (v1)--(v6);
\draw[blue!75!ink,line width=.6pt] (v2)--(v3);
\draw[blue!75!ink,line width=.6pt] (v2)--(v6);
\draw[blue!75!ink,line width=.6pt] (v2)--(v7);
\draw[blue!75!ink,line width=.6pt] (v3)--(v4);
\draw[blue!75!ink,line width=.6pt] (v3)--(v7);
\draw[blue!75!ink,line width=.6pt] (v4)--(v5);
\draw[blue!75!ink,line width=.6pt] (v4)--(v7);
\draw[blue!75!ink,line width=.6pt] (v4)--(v8);
\draw[blue!75!ink,line width=.6pt] (v5)--(v8);
\draw[blue!75!ink,line width=.6pt] (v6)--(v7);
\draw[blue!75!ink,line width=.6pt] (v6)--(v8);
\draw[blue!75!ink,line width=.6pt] (v7)--(v8);
\draw[ink,line width=.9pt] (v0)--(v1)--(v2)--(v3)--(v4)--(v5)--cycle;
\fill[blue] (v0) circle(1.45pt);
\node[right,inner sep=1.3pt,text=ink] at (v0) {$ 0 $};
\fill[blue] (v1) circle(1.45pt);
\node[above,inner sep=1.3pt,text=ink] at (v1) {$ 1 $};
\fill[blue] (v2) circle(1.45pt);
\node[above left,inner sep=1.3pt,text=ink] at (v2) {$ 2 $};
\fill[blue] (v3) circle(1.45pt);
\node[left,inner sep=1.3pt,text=ink] at (v3) {$ 3 $};
\fill[blue] (v4) circle(1.45pt);
\node[below,inner sep=1.3pt,text=ink] at (v4) {$ 4 $};
\fill[blue] (v5) circle(1.45pt);
\node[below right,inner sep=1.3pt,text=ink] at (v5) {$ 5 $};
\fill[blue] (v6) circle(1.45pt);
\draw[ink,line width=.95pt] (v6) circle(4.8pt);
\node[above right,xshift=3pt,yshift=3pt,inner sep=1.3pt,text=ink] at (v6) {$\mathbf{6}$};
\fill[blue] (v7) circle(1.45pt);
\node[left,inner sep=1.3pt,text=ink] at (v7) {$ 7 $};
\fill[blue] (v8) circle(1.45pt);
\node[below right,inner sep=1.3pt,text=ink] at (v8) {$ 8 $};
\path[use as bounding box] (-.45,-.45) rectangle (6.45,6.45000000);
\end{tikzpicture}
\caption{Realization~1: $\dim S_3^2=10$}\label{fig:j3-1}
\end{subfigure}\hfill
\begin{subfigure}[t]{.47\linewidth}\centering
\begin{tikzpicture}[scale=.78,line cap=round,every node/.style={font=\footnotesize}]
\coordinate (v0) at (6.000000000,3.000000000);
\coordinate (v1) at (3.000000000,6.000000000);
\coordinate (v2) at (0.000000000,6.000000000);
\coordinate (v3) at (0.000000000,3.000000000);
\coordinate (v4) at (3.000000000,0.000000000);
\coordinate (v5) at (6.000000000,0.000000000);
\coordinate (v6) at (3.000000000,3.750000000);
\coordinate (v7) at (2.250000000,3.000000000);
\coordinate (v8) at (3.750000000,2.250000000);
\draw[brick!75!ink,line width=.6pt] (v0)--(v1);
\draw[brick!75!ink,line width=.6pt] (v0)--(v5);
\draw[brick!75!ink,line width=.6pt] (v0)--(v6);
\draw[brick!75!ink,line width=.6pt] (v0)--(v8);
\draw[brick!75!ink,line width=.6pt] (v1)--(v2);
\draw[brick!75!ink,line width=.6pt] (v1)--(v6);
\draw[brick!75!ink,line width=.6pt] (v2)--(v3);
\draw[brick!75!ink,line width=.6pt] (v2)--(v6);
\draw[brick!75!ink,line width=.6pt] (v2)--(v7);
\draw[brick!75!ink,line width=.6pt] (v3)--(v4);
\draw[brick!75!ink,line width=.6pt] (v3)--(v7);
\draw[brick!75!ink,line width=.6pt] (v4)--(v5);
\draw[brick!75!ink,line width=.6pt] (v4)--(v7);
\draw[brick!75!ink,line width=.6pt] (v4)--(v8);
\draw[brick!75!ink,line width=.6pt] (v5)--(v8);
\draw[brick!75!ink,line width=.6pt] (v6)--(v7);
\draw[brick!75!ink,line width=.6pt] (v6)--(v8);
\draw[brick!75!ink,line width=.6pt] (v7)--(v8);
\draw[ink,line width=.9pt] (v0)--(v1)--(v2)--(v3)--(v4)--(v5)--cycle;
\fill[brick] (v0) circle(1.45pt);
\node[right,inner sep=1.3pt,text=ink] at (v0) {$ 0 $};
\fill[brick] (v1) circle(1.45pt);
\node[above,inner sep=1.3pt,text=ink] at (v1) {$ 1 $};
\fill[brick] (v2) circle(1.45pt);
\node[above left,inner sep=1.3pt,text=ink] at (v2) {$ 2 $};
\fill[brick] (v3) circle(1.45pt);
\node[left,inner sep=1.3pt,text=ink] at (v3) {$ 3 $};
\fill[brick] (v4) circle(1.45pt);
\node[below,inner sep=1.3pt,text=ink] at (v4) {$ 4 $};
\fill[brick] (v5) circle(1.45pt);
\node[below right,inner sep=1.3pt,text=ink] at (v5) {$ 5 $};
\fill[brick] (v6) circle(1.45pt);
\draw[ink,line width=.95pt] (v6) circle(4.8pt);
\node[above right,xshift=3pt,yshift=3pt,inner sep=1.3pt,text=ink] at (v6) {$\mathbf{6}$};
\fill[brick] (v7) circle(1.45pt);
\node[left,inner sep=1.3pt,text=ink] at (v7) {$ 7 $};
\fill[brick] (v8) circle(1.45pt);
\node[below right,inner sep=1.3pt,text=ink] at (v8) {$ 8 $};
\path[use as bounding box] (-.45,-.45) rectangle (6.45,6.45000000);
\end{tikzpicture}
\caption{Realization~2: $\dim S_3^2=11$}\label{fig:j3-2}
\end{subfigure}

\caption{The two cubic-jump realizations. Circles mark the only moving vertex, $p_6$, which changes from $(1,10)$ to $(0,10)$. The triangle incidences and local direction types remain fixed.}\label{fig:j3}
\end{figure}

\FloatBarrier

\subsection{Quartics: a jump within one orientation-sign chamber}

The geometry dependence of quartic $C^2$ splines on the Morgan--Scott triangulation was studied by \citet{Diener90} and \citet*{DengFengKozak00}. The following pair exhibits this dimension change while preserving the full orientation-sign data.

\begin{example}[A quartic Morgan--Scott jump]\label{ex:j4}
Take the seven-face Morgan--Scott configuration, with boundary vertices $0,1,2$ and interior vertices $3,4,5$. Its faces are
\begin{equation}\label{eq:msfaces}
\begin{gathered}
(3,4,5),(0,1,3),(1,2,4),(2,0,5),\\
(0,3,5),(1,4,3),(2,5,4).
\end{gathered}
\end{equation}
Fix boundary coordinates
\[
p_0=(50,0),\qquad p_1=(0,50),\qquad p_2=(0,0).
\]
The interior coordinates are
\[
\begin{array}{c|ccc}
&p_3&p_4&p_5\\\hline
\text{Realization 1}&(21,20)&(10,20)&(20,10)\\
\text{Realization 2}&(20,20)&(10,20)&(20,10)
\end{array}
\]
Every interior vertex retains four supporting lines. The signs of all ordered three-point determinants agree, so the full orientation-sign data are unchanged. The dimensions are
\[
L_4^2=15,\qquad
\dim S_4^2(\text{Realization 1})=15,\qquad
\dim S_4^2(\text{Realization 2})=16.
\]
See Figures~\ref{fig:j4}\subref{fig:j4-1} and~\ref{fig:j4}\subref{fig:j4-2}.
\end{example}
\begin{figure}[!htbp]\centering
\begin{subfigure}[t]{.47\linewidth}\centering
\begin{tikzpicture}[scale=.78,line cap=round,every node/.style={font=\footnotesize}]
\coordinate (v0) at (6.000000000,0.000000000);
\coordinate (v1) at (0.000000000,6.000000000);
\coordinate (v2) at (0.000000000,0.000000000);
\coordinate (v3) at (2.520000000,2.400000000);
\coordinate (v4) at (1.200000000,2.400000000);
\coordinate (v5) at (2.400000000,1.200000000);
\draw[blue!75!ink,line width=.6pt] (v0)--(v1);
\draw[blue!75!ink,line width=.6pt] (v0)--(v2);
\draw[blue!75!ink,line width=.6pt] (v0)--(v3);
\draw[blue!75!ink,line width=.6pt] (v0)--(v5);
\draw[blue!75!ink,line width=.6pt] (v1)--(v2);
\draw[blue!75!ink,line width=.6pt] (v1)--(v3);
\draw[blue!75!ink,line width=.6pt] (v1)--(v4);
\draw[blue!75!ink,line width=.6pt] (v2)--(v4);
\draw[blue!75!ink,line width=.6pt] (v2)--(v5);
\draw[blue!75!ink,line width=.6pt] (v3)--(v4);
\draw[blue!75!ink,line width=.6pt] (v3)--(v5);
\draw[blue!75!ink,line width=.6pt] (v4)--(v5);
\draw[ink,line width=.9pt] (v0)--(v1)--(v2)--cycle;
\fill[blue] (v0) circle(1.45pt);
\node[below right,inner sep=1.3pt,text=ink] at (v0) {$ 0 $};
\fill[blue] (v1) circle(1.45pt);
\node[above,inner sep=1.3pt,text=ink] at (v1) {$ 1 $};
\fill[blue] (v2) circle(1.45pt);
\node[below left,inner sep=1.3pt,text=ink] at (v2) {$ 2 $};
\fill[blue] (v3) circle(1.45pt);
\draw[ink,line width=.95pt] (v3) circle(4.8pt);
\node[above right,xshift=3pt,yshift=3pt,inner sep=1.3pt,text=ink] at (v3) {$\mathbf{3}$};
\fill[blue] (v4) circle(1.45pt);
\node[above left,inner sep=1.3pt,text=ink] at (v4) {$ 4 $};
\fill[blue] (v5) circle(1.45pt);
\node[below right,inner sep=1.3pt,text=ink] at (v5) {$ 5 $};
\path[use as bounding box] (-.45,-.45) rectangle (6.45,6.45000000);
\end{tikzpicture}
\caption{Realization~1: $\dim S_4^2=15$}\label{fig:j4-1}
\end{subfigure}\hfill
\begin{subfigure}[t]{.47\linewidth}\centering
\begin{tikzpicture}[scale=.78,line cap=round,every node/.style={font=\footnotesize}]
\coordinate (v0) at (6.000000000,0.000000000);
\coordinate (v1) at (0.000000000,6.000000000);
\coordinate (v2) at (0.000000000,0.000000000);
\coordinate (v3) at (2.400000000,2.400000000);
\coordinate (v4) at (1.200000000,2.400000000);
\coordinate (v5) at (2.400000000,1.200000000);
\draw[brick!75!ink,line width=.6pt] (v0)--(v1);
\draw[brick!75!ink,line width=.6pt] (v0)--(v2);
\draw[brick!75!ink,line width=.6pt] (v0)--(v3);
\draw[brick!75!ink,line width=.6pt] (v0)--(v5);
\draw[brick!75!ink,line width=.6pt] (v1)--(v2);
\draw[brick!75!ink,line width=.6pt] (v1)--(v3);
\draw[brick!75!ink,line width=.6pt] (v1)--(v4);
\draw[brick!75!ink,line width=.6pt] (v2)--(v4);
\draw[brick!75!ink,line width=.6pt] (v2)--(v5);
\draw[brick!75!ink,line width=.6pt] (v3)--(v4);
\draw[brick!75!ink,line width=.6pt] (v3)--(v5);
\draw[brick!75!ink,line width=.6pt] (v4)--(v5);
\draw[ink,line width=.9pt] (v0)--(v1)--(v2)--cycle;
\fill[brick] (v0) circle(1.45pt);
\node[below right,inner sep=1.3pt,text=ink] at (v0) {$ 0 $};
\fill[brick] (v1) circle(1.45pt);
\node[above,inner sep=1.3pt,text=ink] at (v1) {$ 1 $};
\fill[brick] (v2) circle(1.45pt);
\node[below left,inner sep=1.3pt,text=ink] at (v2) {$ 2 $};
\fill[brick] (v3) circle(1.45pt);
\draw[ink,line width=.95pt] (v3) circle(4.8pt);
\node[above right,xshift=3pt,yshift=3pt,inner sep=1.3pt,text=ink] at (v3) {$\mathbf{3}$};
\fill[brick] (v4) circle(1.45pt);
\node[above left,inner sep=1.3pt,text=ink] at (v4) {$ 4 $};
\fill[brick] (v5) circle(1.45pt);
\node[below right,inner sep=1.3pt,text=ink] at (v5) {$ 5 $};
\path[use as bounding box] (-.45,-.45) rectangle (6.45,6.45000000);
\end{tikzpicture}
\caption{Realization~2: $\dim S_4^2=16$}\label{fig:j4-2}
\end{subfigure}

\caption{Quartic-jump Realizations~1 and~2. Circles mark the only moving vertex, $p_3$, changing from $(21,20)$ to $(20,20)$. The full orientation-sign data and vertexwise local direction types agree; the dimensions are $15$ and $16$.}\label{fig:j4}
\end{figure}

\FloatBarrier

\subsection{Quintics: generic attainment coexisting with jumps}\label{sec:jump-quintic}
The following pair shows that generic quintic attainment can coexist with a geometric jump on the same combinatorial triangulation.

\begin{example}[A nine-vertex three-sun configuration]\label{ex:j5}
The boundary vertices are $0,1,2$ and the interior vertices are $3,4,5,6,7,8$. The thirteen faces are
\[
\begin{gathered}
(0,1,8),(0,2,7),(0,5,7),(0,5,8),(1,2,6),\\
(1,4,6),(1,4,8),(2,3,6),(2,3,7),(3,4,5),\\
(3,4,6),(3,5,7),(4,5,8).
\end{gathered}
\]
Fix the boundary coordinates
\[
p_0=(0,0),\qquad p_1=(30,0),\qquad p_2=(15,26),
\]
and the interior coordinates
\[
p_3=(15,10),\qquad p_4=(210/13,8),\qquad p_5=(180/13,8).
\]
The remaining interior coordinates are
\[
\begin{array}{c|ccc}
&p_6&p_7&p_8\\\hline
\rule{0pt}{4.5ex}\text{Realization 1}
&\left(\dfrac{1500}{83},\dfrac{2600}{249}\right)
&\left(\dfrac{990}{83},\dfrac{2600}{249}\right)
&\left(15,\dfrac{1274}{249}\right)\\[4pt]
\text{Realization 2}
&\left(18,\dfrac{52}{5}\right)
&\left(12,\dfrac{52}{5}\right)
&\left(15,\dfrac{26}{5}\right)
\end{array}
\]
Only vertices $6,7,8$ move; all other vertices are fixed. Both realizations have the same full orientation-sign data and $(I,E,b)=(6,18,3)$. Vertices $3,4,5$ have valence five, and vertices $6,7,8$ have valence four, with pairwise distinct supporting directions at every interior vertex. The dimensions are
\[
L_5^2=39,\qquad
\dim S_5^2(\text{Realization 1})=39,\qquad
\dim S_5^2(\text{Realization 2})=40.
\]
See Figures~\ref{fig:j5}\subref{fig:j5-1} and~\ref{fig:j5}\subref{fig:j5-2}.
\end{example}
\begin{figure}[!htbp]\centering
\begin{subfigure}[t]{.47\linewidth}\centering
\begin{tikzpicture}[scale=.78,line cap=round,every node/.style={font=\footnotesize}]
\coordinate (v0) at (0.000000000,0.000000000);
\coordinate (v1) at (6.000000000,0.000000000);
\coordinate (v2) at (3.000000000,5.200000000);
\coordinate (v3) at (3.000000000,2.000000000);
\coordinate (v4) at (3.230769231,1.600000000);
\coordinate (v5) at (2.769230769,1.600000000);
\coordinate (v6) at (3.614457831,2.088353414);
\coordinate (v7) at (2.385542169,2.088353414);
\coordinate (v8) at (3.000000000,1.023293173);
\draw[blue!75!ink,line width=.6pt] (v0)--(v1);
\draw[blue!75!ink,line width=.6pt] (v0)--(v2);
\draw[blue!75!ink,line width=.6pt] (v0)--(v5);
\draw[blue!75!ink,line width=.6pt] (v0)--(v7);
\draw[blue!75!ink,line width=.6pt] (v0)--(v8);
\draw[blue!75!ink,line width=.6pt] (v1)--(v2);
\draw[blue!75!ink,line width=.6pt] (v1)--(v4);
\draw[blue!75!ink,line width=.6pt] (v1)--(v6);
\draw[blue!75!ink,line width=.6pt] (v1)--(v8);
\draw[blue!75!ink,line width=.6pt] (v2)--(v3);
\draw[blue!75!ink,line width=.6pt] (v2)--(v6);
\draw[blue!75!ink,line width=.6pt] (v2)--(v7);
\draw[blue!75!ink,line width=.6pt] (v3)--(v4);
\draw[blue!75!ink,line width=.6pt] (v3)--(v5);
\draw[blue!75!ink,line width=.6pt] (v3)--(v6);
\draw[blue!75!ink,line width=.6pt] (v3)--(v7);
\draw[blue!75!ink,line width=.6pt] (v4)--(v5);
\draw[blue!75!ink,line width=.6pt] (v4)--(v6);
\draw[blue!75!ink,line width=.6pt] (v4)--(v8);
\draw[blue!75!ink,line width=.6pt] (v5)--(v7);
\draw[blue!75!ink,line width=.6pt] (v5)--(v8);
\draw[ink,line width=.9pt] (v0)--(v1)--(v2)--cycle;
\fill[blue] (v0) circle(1.45pt);
\node[below left,inner sep=1.3pt,text=ink] at (v0) {$ 0 $};
\fill[blue] (v1) circle(1.45pt);
\node[below right,inner sep=1.3pt,text=ink] at (v1) {$ 1 $};
\fill[blue] (v2) circle(1.45pt);
\node[above,inner sep=1.3pt,text=ink] at (v2) {$ 2 $};
\fill[blue] (v3) circle(1.45pt);
\node[above right,inner sep=1.3pt,text=ink] at (v3) {$ 3 $};
\fill[blue] (v4) circle(1.45pt);
\node[below right,inner sep=1.3pt,text=ink] at (v4) {$ 4 $};
\fill[blue] (v5) circle(1.45pt);
\node[below left,inner sep=1.3pt,text=ink] at (v5) {$ 5 $};
\fill[blue] (v6) circle(1.45pt);
\draw[ink,line width=.95pt] (v6) circle(4.8pt);
\node[above right,xshift=3pt,yshift=3pt,inner sep=1.3pt,text=ink] at (v6) {$\mathbf{6}$};
\fill[blue] (v7) circle(1.45pt);
\draw[ink,line width=.95pt] (v7) circle(4.8pt);
\node[above left,xshift=-3pt,yshift=3pt,inner sep=1.3pt,text=ink] at (v7) {$\mathbf{7}$};
\fill[blue] (v8) circle(1.45pt);
\draw[ink,line width=.95pt] (v8) circle(4.8pt);
\node[below,yshift=-4pt,inner sep=1.3pt,text=ink] at (v8) {$\mathbf{8}$};
\path[use as bounding box] (-.45,-.45) rectangle (6.45,5.65);
\end{tikzpicture}
\caption{Realization~1: $\dim S_5^2=39$}\label{fig:j5-1}
\end{subfigure}\hfill
\begin{subfigure}[t]{.47\linewidth}\centering
\begin{tikzpicture}[scale=.78,line cap=round,every node/.style={font=\footnotesize}]
\coordinate (v0) at (0.000000000,0.000000000);
\coordinate (v1) at (6.000000000,0.000000000);
\coordinate (v2) at (3.000000000,5.200000000);
\coordinate (v3) at (3.000000000,2.000000000);
\coordinate (v4) at (3.230769231,1.600000000);
\coordinate (v5) at (2.769230769,1.600000000);
\coordinate (v6) at (3.600000000,2.080000000);
\coordinate (v7) at (2.400000000,2.080000000);
\coordinate (v8) at (3.000000000,1.040000000);
\draw[brick!75!ink,line width=.6pt] (v0)--(v1);
\draw[brick!75!ink,line width=.6pt] (v0)--(v2);
\draw[brick!75!ink,line width=.6pt] (v0)--(v5);
\draw[brick!75!ink,line width=.6pt] (v0)--(v7);
\draw[brick!75!ink,line width=.6pt] (v0)--(v8);
\draw[brick!75!ink,line width=.6pt] (v1)--(v2);
\draw[brick!75!ink,line width=.6pt] (v1)--(v4);
\draw[brick!75!ink,line width=.6pt] (v1)--(v6);
\draw[brick!75!ink,line width=.6pt] (v1)--(v8);
\draw[brick!75!ink,line width=.6pt] (v2)--(v3);
\draw[brick!75!ink,line width=.6pt] (v2)--(v6);
\draw[brick!75!ink,line width=.6pt] (v2)--(v7);
\draw[brick!75!ink,line width=.6pt] (v3)--(v4);
\draw[brick!75!ink,line width=.6pt] (v3)--(v5);
\draw[brick!75!ink,line width=.6pt] (v3)--(v6);
\draw[brick!75!ink,line width=.6pt] (v3)--(v7);
\draw[brick!75!ink,line width=.6pt] (v4)--(v5);
\draw[brick!75!ink,line width=.6pt] (v4)--(v6);
\draw[brick!75!ink,line width=.6pt] (v4)--(v8);
\draw[brick!75!ink,line width=.6pt] (v5)--(v7);
\draw[brick!75!ink,line width=.6pt] (v5)--(v8);
\draw[ink,line width=.9pt] (v0)--(v1)--(v2)--cycle;
\fill[brick] (v0) circle(1.45pt);
\node[below left,inner sep=1.3pt,text=ink] at (v0) {$ 0 $};
\fill[brick] (v1) circle(1.45pt);
\node[below right,inner sep=1.3pt,text=ink] at (v1) {$ 1 $};
\fill[brick] (v2) circle(1.45pt);
\node[above,inner sep=1.3pt,text=ink] at (v2) {$ 2 $};
\fill[brick] (v3) circle(1.45pt);
\node[above right,inner sep=1.3pt,text=ink] at (v3) {$ 3 $};
\fill[brick] (v4) circle(1.45pt);
\node[below right,inner sep=1.3pt,text=ink] at (v4) {$ 4 $};
\fill[brick] (v5) circle(1.45pt);
\node[below left,inner sep=1.3pt,text=ink] at (v5) {$ 5 $};
\fill[brick] (v6) circle(1.45pt);
\draw[ink,line width=.95pt] (v6) circle(4.8pt);
\node[above right,xshift=3pt,yshift=3pt,inner sep=1.3pt,text=ink] at (v6) {$\mathbf{6}$};
\fill[brick] (v7) circle(1.45pt);
\draw[ink,line width=.95pt] (v7) circle(4.8pt);
\node[above left,xshift=-3pt,yshift=3pt,inner sep=1.3pt,text=ink] at (v7) {$\mathbf{7}$};
\fill[brick] (v8) circle(1.45pt);
\draw[ink,line width=.95pt] (v8) circle(4.8pt);
\node[below,yshift=-4pt,inner sep=1.3pt,text=ink] at (v8) {$\mathbf{8}$};
\path[use as bounding box] (-.45,-.45) rectangle (6.45,5.65);
\end{tikzpicture}
\caption{Realization~2: $\dim S_5^2=40$}\label{fig:j5-2}
\end{subfigure}

\caption{Quintic-jump Realizations~1 and~2, drawn at the same scale with their actual coordinates. Circles mark the moving vertices $p_6,p_7,p_8$; all other vertices are fixed. The full orientation-sign data and vertexwise local direction types agree; the dimensions are $39$ and $40$.}\label{fig:j5}
\end{figure}

Together with Theorem~\ref{thm:generic}, this example realizes the $\GG\cap\JJ$ entry in Figure~\ref{fig:phase}. A jump with fixed local data occurs above the same lower bound, unlike a dimension change caused by a change in the lower bound itself.

\begin{table}[!htbp]\centering\small
\begin{tabular}{ccrrrr}\toprule
&&\multicolumn{2}{c}{$\rank C_d$}&\multicolumn{2}{c}{$\dim S_d^2$}\\
$d$&$L_d^2$&Realization~1&Realization~2&Realization~1&Realization~2\\\midrule
3&10&12&11&10&11\\
4&15&27&26&15&16\\
5&39&90&89&39&40\\\bottomrule
\end{tabular}
\caption{Exact cofactor ranks, common lower bounds, and dimensions of the three geometric-jump pairs.}\label{tab:jumps}
\end{table}
\FloatBarrier

\section{Open questions in degree six}\label{sec:open}
Theorem~\ref{thm:gthreshold} gives generic attainment for $S_6^2$. The unresolved entries in Figure~\ref{fig:phase} concern universal attainment and geometric jumps with fixed local data. We state these two questions and explain how they are related.

\begin{question}[Universal sextic attainment?]\label{q:s6}
For every finite straight-line triangulated disk and every realization, is
\begin{equation}\label{eq:s6open}
\dim S_6^2(\Delta,p)
=L_6^2(\Delta,p)
=28+8I+10(b-3)+N_3+3N_2\ ?
\end{equation}
\end{question}
A counterexample must satisfy $\dim S_6^2>L_6^2$ with the actual local corrections. Generic attainment alone does not exclude such a special realization.

\begin{question}[Sextic jumps with fixed local data?]\label{q:j6}
Does there exist a fixed abstract triangulated disk $\Delta$ with two legal realizations $p,p'$ having equal local direction types at corresponding interior vertices but
\[
\dim S_6^2(\Delta,p)\ne\dim S_6^2(\Delta,p')\ ?
\]
In the phase notation, is $(6,2)\in\JJ$?
\end{question}

Universal attainment and geometric jumps with fixed local data are mutually exclusive. To see this, fix $d\ge3$ and two realizations $p,p'$ of the same triangulation with equal vertexwise local direction types. Their combinatorial counts and $N_2,N_3$ agree, so their actual lower bounds agree. If $(d,2)\in\UU$, then
\[
\dim S_d^2(\Delta,p)=L_d^2(\Delta,p)
=L_d^2(\Delta,p')=\dim S_d^2(\Delta,p').
\]
Their dimensions therefore cannot jump, proving $\UU\cap\JJ=\varnothing$. In particular, a positive answer to Question~\ref{q:s6} forces a negative answer to Question~\ref{q:j6}, while a sextic jump would disprove universal sextic attainment.

A negative answer to Question~\ref{q:s6} gives a special realization that fails to attain the bound. To establish a sextic jump, one must also find a realization of the same triangulation with the same vertexwise local direction types and a different dimension. Generic attainment alone does not ensure that the prescribed set of local types contains such a comparison realization.

The jumps in degrees three, four, and five exclude universal attainment in those degrees. Theorem~\ref{thm:universal} and Hong's theorem for $d\ge8$ \citep{Hong91} give universal attainment in every degree $d\ge7$. These results leave exactly the following two alternatives.
\begin{corollary}[The universal frontier: two alternatives]\label{thm:ufrontier}
In the degree range $d\ge3$ considered here, there are exactly two possibilities:
\[
\UU=\{(d,2):d\ge6\}\quad\text{or}\quad
\UU=\{(d,2):d\ge7\}.
\]
Thus $u(2)\in\{6,7\}$, where $u(2)$ is the least $d_0\ge3$ such that $(d,2)\in\UU$ for every $d\ge d_0$.
\end{corollary}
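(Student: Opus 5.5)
The corollary follows by bookkeeping from results already stated, organized around the set $\UU\cap\{d\ge3\}$. I would establish three facts in order. First, $(d,2)\in\UU$ for every $d\ge7$: degree seven is Theorem~\ref{thm:universal}, and degrees $d\ge8$ are Hong's theorem \citep{Hong91}, which gives universal attainment for $d\ge3r+2=8$. Second, $(d,2)\notin\UU$ for $d=3,4,5$. This uses Theorem~\ref{thm:jumps}, which places $(3,2),(4,2),(5,2)$ in $\JJ$, together with the disjointness $\UU\cap\JJ=\varnothing$ from Remark~\ref{rem:phase-relations}, which was also proved directly just before the corollary. For $d=3,4$ one could alternatively invoke $\UU\subseteq\GG$ and Theorem~\ref{thm:gf}, but the jump argument covers all three degrees uniformly. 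Third, degree six is undecided by the stated results: Question~\ref{q:s6} is open, so $(6,2)$ may or may not lie in $\UU$.

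Combining these facts, $\UU\cap\{(d,2):d\ge3\}$ contains $\{(d,2):d\ge7\}$ and excludes degrees $3,4,5$. The only remaining freedom is whether $(6,2)\in\UU$. If it is, then $\UU=\{(d,2):d\ge6\}$; if it is not, then $\UU=\{(d,2):d\ge7\}$. These two alternatives are distinct, and together they exhaust the possibilities. This is exactly the claimed dichotomy. I would state explicitly that ``exactly two possibilities'' means that no other set is consistent with the proved results, not that both sets occur.

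For the threshold, recall that $u(2)$ is the least $d_0\ge3$ with $(d,2)\in\UU$ for all $d\ge d_0$. In the first case $u(2)=6$, since $(5,2)\notin\UU$. In the second case $u(2)=7$, since $(6,2)\notin\UU$. In both cases $u(2)$ is well defined, because every degree $d\ge7$ lies in $\UU$. Hence $u(2)\in\{6,7\}$.

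There is no real obstacle here. The only point needing care is to check that Hong's range $d\ge3r+2$ with $r=2$ starts exactly at $8$, so that Theorem~\ref{thm:universal} fills the single remaining gap at degree seven, and that the jump witnesses of Examples~\ref{ex:j3}--\ref{ex:j5} indeed preserve vertexwise local direction types, as Theorem~\ref{thm:jumps} asserts.
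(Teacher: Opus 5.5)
Your proposal is correct and follows essentially the same route as the paper. Universal attainment for all $d\ge7$ comes from Theorem~\ref{thm:universal} together with Hong's theorem, degrees three to five are excluded by the jump witnesses of Theorem~\ref{thm:jumps} combined with $\UU\cap\JJ=\varnothing$, and only degree six is left undecided; your side remarks (the alternative exclusion of degrees three and four via $\UU\subseteq\GG$ and Theorem~\ref{thm:gf}, and reading ``exactly two possibilities'' as consistency with the proved results) are accurate but not needed.
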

If the sextic formula is universal, the first alternative holds. Any sextic counterexample establishes the second alternative, whether or not it also leads to a fixed-local-data jump. The two question marks in Figure~\ref{fig:phase} record Questions~\ref{q:s6} and~\ref{q:j6} in the universal and geometric-jump rows, respectively.
\FloatBarrier

\Needspace{32\baselineskip}
\section{Conclusions}\label{sec:conclusion}
We prove that Schumaker's lower bound, including its local geometric corrections, is attained in degree seven for every legal realization of every triangulated disk. We also prove generic attainment in degrees five and six for every fixed combinatorial triangulation. The examples based on the known chord and boundary-pair mechanisms give explicit polynomial conditions certifying generic failure in degrees three and four. Together with Hong's theorem for degrees at least eight \citep{Hong91}, these results give universal attainment in every degree at least seven and an exact generic threshold of five.

We construct geometric jumps in degrees three and five, with different dimensions but the same local direction types at corresponding interior vertices. We also present the classical geometric jump in degree four. The realization pairs in degrees four and five preserve all orientation signs. Universal attainment in degree seven rules out geometric jumps with fixed local direction types at that degree.

In degree six, universal attainment and the existence of such geometric jumps remain open questions. Future work will focus on geometric conditions for strict excess and on dimension variation with fixed local direction types.
\FloatBarrier

\begingroup
\makeatletter
\renewcommand\section{\@startsection{section}{1}{\z@}%
  {-1.5ex\@plus-.5ex\@minus-.2ex}%
  {.8ex\@plus.2ex}%
  {\normalfont\Large\bfseries}}
\makeatother
\section*{Acknowledgements}
\begingroup\small\emergencystretch=1em
This work was supported by the National Key R\&D Program of China (Nos. 2022YFA1005200 and 2022YFA1005201), the NSF of China (No. 12171453), and the Major Project of Science and Technology Innovation Tackling Plan of Anhui Province (No. 202423e09050003).
\par\endgroup

\section*{Declarations}
The authors declare that they have no conflict of interest.
\par\endgroup

\clearpage
\appendix
\renewcommand{\thetheorem}{\Alph{section}.\arabic{theorem}}
\renewcommand{\theequation}{\Alph{section}.\arabic{equation}}
\renewcommand{\thesubsection}{\Alph{section}.\arabic{subsection}}
\renewcommand{\thesubsubsection}{\Alph{section}.\arabic{subsection}.\arabic{subsubsection}}
\section{Complete proof of the universal septic formula}\label{app:u7}\label{sec:app-u7}

\subsection{Proof outline: exclude a nonzero row relation on the actual target}\label{u7:sec:intro}

Fix any legal real-coordinate realization. We prove surjectivity of the septic cofactor map onto its actual local-ideal target, defined in Section~\ref{u7:sec:homology}. Choose a coefficient matrix $M_7$ and prove that its rows are independent, as follows.
\begin{enumerate}
\item \textbf{A row relation gives a nonzero dual state.} If $a\ne0$ satisfies $a^{\mathsf T}M_7=0$, it defines a nonzero linear functional on the actual target $\bigoplus_v(J_v)_7$ annihilating the image of $\Phi_7$. Representing it by differential pairing gives a compatible state $(H_v)$, still nonzero after quotienting by the complete local annihilators.
\item \textbf{The state gives finite nonzero scalar support.} Common traces on each actual face determine a unique face potential $K_\tau$. The leading term of $H_v-K_\tau$, normalized by area, gives a vertex scalar $\mu_v$. At two-direction vertices, normalizing $\mu_v$ to zero only changes the representative of the same effective class. If all normalized scalars vanish, the original functional vanishes. A nonzero row relation therefore gives nonempty finite scalar support.
\item \textbf{A farthest point gives a complete convex-hull star.} Choose a center outside the support and a farthest support point. The tangent to the corresponding circle puts all other nonzero-scalar points strictly on one side. Area transfer forces exactly two consecutive zero-scalar neighbors in its complete neighbor cycle; the remaining nonzero neighbors lie in a consecutive sector of angle less than $\pi$.
\item \textbf{The closed cycle excludes the star.} Extract cyclic cubic polynomials from the original state, keeping independent labels for the two exterior nodes. The Hermite equations, quadratic companion identity, and full period force positive endpoint slopes. Recurrence makes all inner slopes positive, contradicting the closed-cycle flux identity. Thus no nonzero row relation exists, $M_7$ has full row rank, and $\Phi_7$ is surjective.
\end{enumerate}

The matrix $M_7$ uses bases of the actual local-ideal target.

\subsection{Nonzero row relations and effective dual states}\label{u7:sec:homology}

We retain the notation of Section~\ref{sec:prelim}. Let $R=\RR[x,y,z]$, with $z$ the homogenizing variable. Subscripts on graded polynomial rings and homogeneous ideals denote the homogeneous component of the indicated degree, so $R_d$ consists of forms of degree exactly $d$. Choose a homogeneous line equation $\ell_e=0$ for each edge and define the edge and vertex ideals by
\begin{equation}\label{u7:eq:ideals}
J(e)=(\ell_e^3),\qquad J_v=\sum_{e\ni v}J(e).
\end{equation}
For the cofactor map
\begin{equation}\label{u7:eq:ideal-map}
\Phi_7:\mathcal E_7:=\bigoplus_{e\in\ee}R_4
\longrightarrow\mathcal V_7:=\bigoplus_{v\in\ve}(J_v)_7
\end{equation}
choose source monomials and a basis of each actual local-ideal target $(J_v)_7$, combined into a direct-sum basis $\mathscr B$. Its coefficient matrix is
\begin{equation}\label{u7:eq:coefficient-matrix}
[\Phi_7q]_{\mathscr B}=M_7[q],\qquad
M_7\in\RR^{m\times15E},\qquad
m=\sum_{v\in\ve}\dim (J_v)_7.
\end{equation}
Here $[q]$ is the concatenated source monomial coefficient vector, and $[\Phi_7q]_{\mathscr B}$ is the target coordinate vector in $\mathscr B$. Also, $(\Phi_7q)_v=\sum_{e\ni v}\epsilon_{ve}\ell_e^3q_e$, with coherent vertex-circuit incidence signs. Each interior edge has one shared cofactor; boundary vertices have no target component. The classical cofactor count gives \citep[Sections 2--3]{MourrainVillamizar13}
\begin{equation}\label{u7:eq:def-epsilon}
\eps_7:=\dim S_7^2-L_7^2
=\dim\coker\Phi_7=m-\rank M_7.
\end{equation}
The objective is therefore full row rank of $M_7$.

Suppose instead that $a\ne0$ satisfies $a^{\mathsf T}M_7=0$. It induces on the actual target
\begin{equation}\label{u7:eq:row-functional}
\Lambda_a(y)=a^{\mathsf T}[y]_{\mathscr B},
\qquad \Lambda_a\ne0,\qquad \Lambda_a\circ\Phi_7=0.
\end{equation}
Take $\calT=\RR[X,Y,Z]$ and the nondegenerate differential pairing
\begin{equation}\label{u7:eq:pairing}
\langle p,H\rangle
=p(\partial_X,\partial_Y,\partial_Z)H\big|_{(0,0,0)}
\qquad(p\in R_d,\ H\in\calT_d).
\end{equation}
Decompose $\Lambda_a$ by vertex and extend each functional on $(J_v)_7$ to $R_7$, representing it by $H_v\in\calT_7$. The freedom in this choice is precisely the full local annihilator
\begin{equation}\label{u7:eq:annihilator}
\calA_v=(J_v)_7^\perp
=\bigcap_{e\ni v}\ker(D_e^3:\calT_7\to\calT_4),
\end{equation}
For $\ell_e=\alpha x+\beta y+\gamma z$, write
$D_e=\alpha\partial_X+\beta\partial_Y+\gamma\partial_Z$.

Retain an arbitrary cofactor $q\in R_4$ on only one source edge. The identity
$\langle\ell_e^3q,H\rangle=\langle q,D_e^3H\rangle$
together with \eqref{u7:eq:row-functional} yields
\begin{equation}\label{u7:eq:trace-match}
D_e^3H_u=D_e^3H_v.
\end{equation}
If an edge has only one interior endpoint, its third-order trace there vanishes. Set boundary states $H_b=0$ as an auxiliary convention, so \eqref{u7:eq:trace-match} applies to every actual edge. This imposes no boundary condition on the original splines.

Let $\calZ_\Delta$ be the space of states satisfying these conditions. Quotienting by the local annihilators gives
\begin{equation}\label{u7:eq:dual-defect}
\calD_\Delta
=\calZ_\Delta\big/\bigoplus_{v\in\ve}\calA_v
\cong\ker(\Phi_7^\vee)
\cong(\coker\Phi_7)^\vee.
\end{equation}
Since $\Lambda_a\ne0$, the effective class is nonzero; changing the annihilator component of $H_v$ does not change the functional. Conversely, the effective class is zero exactly when each representative lies in its full local annihilator, equivalently when all actual edge traces vanish. We derive a geometric contradiction from this nonzero class.

\subsubsection{Local direction normal forms for normalization}
Associate with $v=(x_v,y_v)$ the fixed point linear form
\begin{equation}\label{u7:eq:point-form}
V=x_vX+y_vY+Z.
\end{equation}
Every edge operator through $v$ annihilates $V$. Two distinct directions can be normalized by an invertible linear change to $\partial_S,\partial_T$, with $(S,T,V)$ independent coordinates. The following calculation gives the actual target dimensions and explains why two-direction normalization preserves the effective class.

\begin{lemma}[Local dimensions and direction normal forms]\label{u7:lem:local}
Every interior vertex has at least two supporting directions, and
\begin{equation}\label{u7:eq:local-ranks}
\dim (J_v)_7=\begin{cases}
27,&g_v=2,\\
29,&g_v=3,\\
30,&g_v\ge4.
\end{cases}
\end{equation}
If $g_v\ge3$, then $\calA_v\subseteq V^4\calT_3$. If $g_v=2$, the annihilator contains $V^3S^2T^2$, which does not belong to $V^4\calT_3$.
\end{lemma}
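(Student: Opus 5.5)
The plan is to reduce everything to binary forms by a linear change of coordinates adapted to $v$, and then read off both the dimensions and the annihilators from the Hilbert function of an ideal generated by cubes of binary linear forms, via the differential pairing \eqref{u7:eq:pairing}.

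\textbf{At least two directions.} First I would dispose of the claim $g_v\ge2$. If all edges at an interior vertex $v$ lay on one line, then every incident triangle would have two edges on that line. Such a triangle would be degenerate, which a legal realization excludes. Moreover, the triangles around $v$ fill a full neighbourhood of angle $2\pi$, so at least two supporting lines are needed.

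\textbf{Reduction to binary forms.} Since every $\ell_e$ with $e\ni v$ vanishes at $(x_v,y_v,1)$, I choose linear coordinates $(s,t,w)$ on $R_1$ with $s,t$ vanishing at $v$. Then each $\ell_e$ lies in $\RR[s,t]_1$, and
\[
(J_v)_7=\bigoplus_{k=3}^{7}I_k\,w^{7-k},
\]
where $I\subset\RR[s,t]$ is the ideal generated by the cubes $\lambda_1^3,\dots,\lambda_g^3$ of the $g=g_v$ pairwise non-proportional binary forms. Dually, I take coordinates $(S,T,V)$ on $\calT_1$, with $V$ the point form \eqref{u7:eq:point-form}, so that the edge operators $D_e$ only involve $\partial_S,\partial_T$. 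The annihilator then splits the same way:
\[
\calA_v=\bigoplus_{k}V^{7-k}\,B_k,\qquad B_k=\{F\in\RR[S,T]_k:\ D_i^3F=0\ \text{for all }i\}.
\]
In other words, $B_k=I_k^\perp$ inside binary forms of degree $k$.

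\textbf{Computing $B_k$.} The key computation is $\dim B_k$. Apolarity says $D_i^3F=0$ exactly when $F$ is divisible by $\mu_i^{k-2}$, where $\mu_i$ is the binary linear form whose power is dual to $\lambda_i$; distinct $\lambda_i$ give distinct $\mu_i$. Equivalently, $F$ vanishes to order $k-2$ at the corresponding point of $\mathbb P^1$. These points are distinct, so $B_k=\mu_1^{k-2}\cdots\mu_g^{k-2}\cdot\RR[S,T]_{k-g(k-2)}$, and therefore
\[
\dim B_k=\max\{0,\;k+1-g(k-2)\}.
\]
For $k\le2$ we have $B_k=\RR[S,T]_k$ and $I_k=0$. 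For $k=3,\dots,7$ this gives $\dim B_k$ as follows:
\begin{itemize}
\item for $g=2$: $(2,1,0,0,0)$;
\item for $g=3$: $(1,0,0,0,0)$;
\item for $g\ge4$: all zero.
\end{itemize}
Subtracting from $k+1=(4,5,6,7,8)$ gives $\dim I_k$. Summing, $\dim(J_v)_7=30-3=27$ for $g=2$, $30-1=29$ for $g=3$, and $30$ for $g\ge4$, which is \eqref{u7:eq:local-ranks}.

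\textbf{The annihilator statements.} These follow from the same splitting. If $g\ge3$, then $B_k=0$ for $k\ge4$, so only $k\le3$ contributes. Those summands are multiplied by $V^{7-k}$ with $7-k\ge4$, so $\calA_v\subseteq V^4\calT_3$. If $g=2$, normalize the two directions to $\partial_S,\partial_T$. Then $S^2T^2$ is killed by $\partial_S^3$ and $\partial_T^3$, so $V^3S^2T^2\in\calA_v$. It is not in $V^4\calT_3$ because, in the coordinates $(S,T,V)$, it is not divisible by $V^4$.

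\textbf{Main obstacle.} The only step needing care is the apolarity identification of $\ker D_i^3$ on binary forms of degree $k$ with multiples of $\mu_i^{k-2}$. I would verify it directly by taking $D_i=\partial_S$ in adapted coordinates, where it becomes the statement that $\partial_S^3F=0$ if and only if $\deg_SF\le2$, i.e.\ $T^{k-2}\mid F$. The distinctness of the $\mu_i$ then makes the divisibility conditions independent.
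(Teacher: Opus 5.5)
Your proof is correct, but it takes a different route from the paper's.

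\textbf{The paper's route.} The paper argues by explicit normalization. It sends two directions to $\partial_S,\partial_T$ and lists the nine-element annihilator basis $S^iT^jV^{7-i-j}$ ($0\le i,j\le 2$). It then normalizes a third direction to $\partial_S+\partial_T$ and a fourth to $\partial_S+\lambda\partial_T$. By hand it differentiates the three monomials that survive modulo $V^5\calT_2$. This gives explicit annihilators of dimensions $9$, $7$ (namely $V^5\calT_2\oplus\RR V^4ST(S-T)$) and $6$, and finally $\dim(J_v)_7=36-\dim\calA_v$.

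\textbf{Your route.} You split $\calA_v$ by powers of $V$, which is legitimate because every $D_e$ kills $V$. You then use the divisibility description of $\ker D_i^3$ on binary $k$-forms. Here $\mu_i$ is simply the linear form in $S,T$ annihilated by $D_i$; your adapted-coordinate check makes this precise, whereas the phrase ``whose power is dual to $\lambda_i$'' is loose. This yields the single formula $\dim B_k=\max\{0,k+1-g(k-2)\}$ for all $g$ at once.

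\textbf{What each buys.} Your approach gives uniformity with no case distinctions. Substituting $j=k-2$, the excess $\sum_{k\ge3}\dim B_k$ in degree $d$ is exactly $\sigma_d(g)$ from \eqref{eq:sigma}. So the same computation explains Schumaker's local correction in every degree. The paper's approach produces explicit annihilator elements, such as $V^4ST(S-T)$ at three-direction vertices, using only elementary differentiation.

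Your argument for $g_v\ge2$ is also slightly more direct than the paper's covering remark: a single nondegenerate incident triangle already has two non-collinear edges at $v$.
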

\begin{proof}
A complete interior star covers a neighborhood of its center and hence cannot have only one supporting direction. For two directions, the common annihilator has the following nine basis elements:
\[
S^iT^jV^{7-i-j},\qquad0\le i,j\le2.
\]
Modulo $V^5\calT_2$, the remaining three monomials are
\[
S^2TV^4,\qquad ST^2V^4,\qquad S^2T^2V^3.
\]
Normalize a third distinct direction to $\partial_S+\partial_T$. For
\[
H=(aS^2T+bST^2)V^4+cS^2T^2V^3,
\]
direct differentiation gives
\[
(\partial_S+\partial_T)^3H
=6(a+b)V^4+12c(S+T)V^3.
\]
Thus the full three-direction annihilator is
\begin{equation}\label{u7:eq:three-ann}
\calA_v=V^5\calT_2\oplus\RR V^4ST(S-T),
\end{equation}
which has dimension seven. A fourth distinct direction can be written as $\partial_S+\lambda\partial_T$, with $\lambda\ne0,1$, and
\[
(\partial_S+\lambda\partial_T)^3[V^4ST(S-T)]
=6\lambda(1-\lambda)V^4\ne0.
\]
Four or more directions therefore leave only the six-dimensional space $V^5\calT_2$. Nondegeneracy of the pairing gives $\dim (J_v)_7=36-\dim\calA_v$, proving the assertions.
\end{proof}

By Lemma~\ref{u7:lem:local}, the actual target has row count
\begin{equation}\label{u7:eq:target-dimension}
m=27N_2+29N_3+30(I-N_2-N_3)
=30I-3N_2-N_3.
\end{equation}
Combining \eqref{u7:eq:def-epsilon} and \eqref{u7:eq:dual-defect},
\begin{equation}\label{u7:eq:epsilon-coker}
\eps_7=\dim\coker\Phi_7
=\dim\calD_\Delta=m-\rank M_7\ge0.
\end{equation}
If $m=0$, the criterion already settles the assertion. Otherwise fix the nonzero effective state induced by a nonzero row relation in \eqref{u7:eq:dual-defect} and derive a contradiction.

\subsection{Unique face potentials and area-normalized vertex scalars}\label{u7:sec:scalar}

\subsubsection{The common polynomial on an actual triangular face}
For three actual points $u,v,w$, define the signed doubled area by
\begin{equation}\label{u7:eq:area}
[u,v,w]=\det\begin{pmatrix}x_u&y_u&1\\x_v&y_v&1\\x_w&y_w&1\end{pmatrix}
=\det(v-u,w-u).
\end{equation}
Throughout this section, $U,V,W$ denote the point linear forms in~\eqref{u7:eq:point-form}. Nondegeneracy of the triangle is equivalent to linear independence of $U,V,W$.

\begin{lemma}[Unique common face potential]\label{u7:lem:face}
For every state satisfying~\eqref{u7:eq:trace-match} and every actual triangular face $\tau$, there is a unique $K_\tau\in\calT_7$ whose traces on the three face edges equal the corresponding common traces. We call $K_\tau$ the common face potential.
\end{lemma}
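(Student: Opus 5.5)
The plan is to pass to coordinates adapted to the face, in which the three edge operators become pure partial derivatives. Then uniqueness follows from a monomial count, and existence from a monomial-by-monomial gluing that uses only the compatibility forced at each vertex of $\tau$.

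\textbf{Adapted coordinates.} Let $\tau$ have vertices $u,v,w$, with point forms $U,V,W$ as in \eqref{u7:eq:point-form}. Since $\tau$ is nondegenerate, $U,V,W$ are independent coordinates on $\calT_1$. For a homogeneous linear form $\ell=\alpha x+\beta y+\gamma z$, the pairing gives $D_\ell(x_pX+y_pY+Z)=\ell(p)$, which is the value of $\ell$ at the point $p$. The edge $e=uv$ has $\ell_e(u)=\ell_e(v)=0$ and $\ell_e(w)\ne0$. So $D_{uv}$ annihilates $U$ and $V$ but not $W$, and in the coordinates $(U,V,W)$ it equals $c_{uv}\partial_W$ with $c_{uv}\ne0$. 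Likewise $D_{vw}=c_{vw}\partial_U$ and $D_{wu}=c_{wu}\partial_V$.

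Write $t_e\in\calT_4$ for the common trace on each face edge. By \eqref{u7:eq:trace-match}, $t_{uv}=D_{uv}^3H_u=D_{uv}^3H_v$, and similarly for the other two edges; the convention $H_b=0$ covers boundary endpoints. We seek $K\in\calT_7$ with
\[
\partial_W^3K=c_{uv}^{-3}t_{uv},\qquad
\partial_U^3K=c_{vw}^{-3}t_{vw},\qquad
\partial_V^3K=c_{wu}^{-3}t_{wu}.
\]

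\textbf{Uniqueness.} The difference of two solutions lies in $\ker\partial_U^3\cap\ker\partial_V^3\cap\ker\partial_W^3$ in degree seven. This kernel is spanned by the monomials $U^iV^jW^k$ with $i,j,k\le2$. Such monomials have degree at most $6<7$, so the kernel is zero.

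\textbf{Compatibility at the vertices.} The three prescribed traces satisfy pairwise compatibility. The edges $uv$ and $wu$ both contain $u$, and both traces come from the same $H_u$. Because constant-coefficient operators commute,
\[
D_{wu}^3t_{uv}=D_{wu}^3D_{uv}^3H_u=D_{uv}^3t_{wu}.
\]
In the adapted coordinates this reads $\partial_V^3(c_{uv}^{-3}t_{uv})=\partial_W^3(c_{wu}^{-3}t_{wu})$. The same argument at $v$ and at $w$ gives the other two identities.

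\textbf{Existence.} Expand the unknown $K=\sum\kappa_{ijk}U^iV^jW^k$ with $i+j+k=7$. Every such monomial has at least one exponent $\ge3$, so every coefficient is constrained.
\begin{itemize}
\item The coefficients with $k\ge3$ are determined by the monomial coefficients of $\partial_W^3K$, that is, by $t_{uv}$.
\item Those with $i\ge3$ are determined by $t_{vw}$, and those with $j\ge3$ by $t_{wu}$.
\item A monomial with all three exponents $\ge3$ would have degree at least $9$, so no coefficient is constrained by all three traces.
\item A coefficient constrained by two traces, say $i,k\ge3$, receives two prescribed values. They agree precisely because the $U^{i-3}V^jW^{k-3}$ coefficient of $\partial_U^3(\cdot)$ applied to the $W$-datum equals that of $\partial_W^3(\cdot)$ applied to the $U$-datum, up to the same factorial factors. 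This is exactly the compatibility identity at the shared vertex $w$.
\end{itemize}
Defining $\kappa_{ijk}$ accordingly yields a $K$ with the three required traces.

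The only delicate point is bookkeeping. One must check that each pairwise compatibility holds at the right vertex, with the nonzero scalars $c_e$ carried consistently, and that at a boundary vertex the convention $H_b=0$ still yields the identity; there both traces through it vanish, so it holds trivially.
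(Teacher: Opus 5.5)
Your proof is correct and is essentially the paper's argument. The paper also changes coordinates so that the three edge operators become coordinate partials, uses the shared-vertex compatibility $\partial_j^3G_i=\partial_i^3G_j$, and recovers each septic coefficient from a trace in which some exponent is at least three, with the degree count $9>7$ ruling out triple constraints; your point-form coordinates $(U,V,W)$ simply make that change of variables explicit. One small slip: for $i,k\ge3$ the two traces come from the edges $vw$ and $uv$, so the compatibility you need is the one at $v$, not $w$, which is harmless because you derive all three identities.
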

\begin{proof}
The three edge linear forms of a nondegenerate triangle are independent, as are their first-order differential operators. After an invertible linear change of variables, write these operators as $\partial_1,\partial_2,\partial_3$, and the corresponding quartic traces as $G_1,G_2,G_3$. Every pair of face edges shares a vertex state, so
\[
\partial_j^3G_i=\partial_i^3G_j\qquad(i\ne j).
\]
Recover the coefficients of the septic monomials one at a time. For an exponent vector $(a_1,a_2,a_3)$ with $a_1+a_2+a_3=7$, at least one $a_i$ is at least three. The corresponding coefficient in $G_i$, divided by $a_i(a_i-1)(a_i-2)$, uniquely determines the septic coefficient. If two exponents are at least three, cross-compatibility makes the two determinations agree. Three exponents cannot all be at least three, since that would require total degree at least nine. Every coefficient is therefore determined consistently, proving both existence and uniqueness.
\end{proof}

At vertex $v$ of $\tau=(u,v,w)$, the two incident face-edge operators cubed annihilate $H_v-K_\tau$. In point-form coordinates $(U,W,V)$, these operators are nonzero multiples of $\partial_U,\partial_W$. The two-direction annihilator form therefore gives a unique real $\lambda_{v,\tau}$ with
\begin{equation}\label{u7:eq:lambda}
H_v-K_\tau\equiv
\lambda_{v,\tau}V^3U^2W^2\pmod{V^4\calT_3}.
\end{equation}
This leading term uses $7=3+2+2$. At a boundary vertex, the same representation is used with $H_v=0$.

\subsubsection{Across-face transformations and area transfer}
Consider two actual faces sharing the edge $uv$:
\[
\tau=(u,v,w),\qquad \tau'=(u,v,w').
\]
Since $U,V,W$ are independent, there is a unique expression
\begin{equation}\label{u7:eq:neighbor-coordinate}
W'=sW+pU+qV,\qquad s\ne0.
\end{equation}
The condition $s\ne0$ follows from nondegeneracy of the second face. Neither $p,q\ne0$ nor $s>0$ is assumed.

\begin{figure}[tbp]
\centering
\begin{tikzpicture}[scale=1.02,line cap=round,line join=round]
\coordinate (u) at (-1.7,0);\coordinate (v) at (1.7,0);
\coordinate (w) at (-.35,1.5);\coordinate (z) at (.5,-1.35);
\draw (u)--(w)--(v)--(z)--cycle;\draw[thick] (u)--(v);
\foreach \p in {u,v,w,z}\fill (\p) circle (1.4pt);
\node[left] at (u) {$u$};\node[right] at (v) {$v$};
\node[above] at (w) {$w$};\node[below] at (z) {$w'$};
\node at (0,.52) {$\tau$};\node at (.15,-.50) {$\tau'$};
\node[right,align=left,text width=6.9cm] at (2.2,.1)
 {$W'=sW+pU+qV,\quad s\ne0$.\\[5pt]
 Both faces are nondegenerate.\\
 $p,q$ and the vertex scalars\\
 may all vanish.};
\end{tikzpicture}
\caption{Two actual triangles sharing an interior edge. Leading coefficients transform by $s^{-2}$. Normalizing by the square of the signed doubled area gives one scalar per vertex.}
\label{u7:fig:adjacent}
\end{figure}

\begin{lemma}[Transformation of leading coefficients]\label{u7:lem:transition}
We have
\begin{equation}\label{u7:eq:lambda-transition}
\lambda_{v,\tau'}=\frac{\lambda_{v,\tau}}{s^2},
\qquad
\lambda_{u,\tau'}=\frac{\lambda_{u,\tau}}{s^2},
\qquad
\lambda_{v,\tau}p=\lambda_{u,\tau}q.
\end{equation}
Moreover, if $J=K_{\tau'}-K_\tau$, then
\begin{equation}\label{u7:eq:full-face-jump}
J=U^3V^3\left(
-\frac{2\lambda_{v,\tau}p}{s}W
-\frac{\lambda_{v,\tau}p^2}{s^2}U
-\frac{\lambda_{u,\tau}q^2}{s^2}V
\right).
\end{equation}
\end{lemma}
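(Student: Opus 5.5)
The plan is to compare the two leading-term representations \eqref{u7:eq:lambda} at each endpoint of the shared edge. I would first show that $J$ has a rigid form, namely $J\in U^3V^3\calT_1$, and then read off its three coefficients in two ways.

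\textbf{Step 1 (divisibility).} At $v$, the element $H_v-K_\tau$ is annihilated by the cubes of the two face-edge operators of $\tau$ at $v$. By the two-direction computation in the proof of Lemma~\ref{u7:lem:local}, it therefore lies in the span of the monomials $S^iT^jV^{7-i-j}$ with $i,j\le2$, where $S,T$ complete $V$ to a coordinate system adapted to those two directions. In particular $H_v-K_\tau\in V^3\calT_4$. This membership depends only on the fixed point form $V$, not on the choice of $S,T$. The same reasoning for $\tau'$ gives $H_v-K_{\tau'}\in V^3\calT_4$. Subtracting, $J=(H_v-K_\tau)-(H_v-K_{\tau'})\in V^3\calT_4$. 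The identical argument at $u$, which is legitimate even when $u$ is a boundary vertex because we set $H_u=0$, gives $J\in U^3\calT_4$. Since $U,V$ are independent linear forms and $\calT$ is a UFD, $J\in U^3V^3\calT_1$. We may therefore write
\[
J=U^3V^3(aW+bU+cV)
\]
with unknown scalars $a,b,c$.

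\textbf{Step 2 (expansion at $v$).} Work in the coordinates $(U,W,V)$ for $\tau$ and $(U,W',V)$ for $\tau'$; both are legitimate coordinate systems because both faces are nondegenerate. Reduce modulo $V^4\calT_3$, using \eqref{u7:eq:lambda} for both faces:
\[
J\equiv\lambda_{v,\tau}V^3U^2W^2-\lambda_{v,\tau'}V^3U^2W'^2 \pmod{V^4\calT_3}.
\]
Substitute $W'=sW+pU+qV$ from \eqref{u7:eq:neighbor-coordinate}. Every term containing $qV$ is absorbed into $V^4\calT_3$, so $W'^2$ may be replaced by $s^2W^2+2spUW+p^2U^2$.

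\textbf{Step 3 (matching coefficients).} The right side modulo $V^4$ is a combination of $V^3U^2W^2$, $V^3U^3W$, and $V^3U^4$. The reduced form of $U^3V^3(aW+bU+cV)$ has no $V^3U^2W^2$ term, so
\[
\lambda_{v,\tau}=s^2\lambda_{v,\tau'},
\]
which is the first relation in \eqref{u7:eq:lambda-transition}. The remaining coefficients give
\[
a=-2sp\,\lambda_{v,\tau'}=-\frac{2\lambda_{v,\tau}p}{s},\qquad b=-\frac{\lambda_{v,\tau}p^2}{s^2}.
\]
The symmetric expansion at $u$, modulo $U^4\calT_3$ with the roles of $U$ and $V$ and of $p$ and $q$ exchanged, gives $\lambda_{u,\tau'}=\lambda_{u,\tau}/s^2$, $c=-\lambda_{u,\tau}q^2/s^2$, and a second expression $a=-2\lambda_{u,\tau}q/s$. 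Equating the two expressions for $a$ and using $s\ne0$ gives $\lambda_{v,\tau}p=\lambda_{u,\tau}q$. Substituting $a,b,c$ into the form from Step~1 yields \eqref{u7:eq:full-face-jump}.

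\textbf{Expected obstacle.} The delicate point is Step~1. The leading-term decomposition is taken in different coordinate systems for $\tau$ and $\tau'$, so one must check that membership in $V^3\calT_4$ and the reduction modulo $V^4\calT_3$ are intrinsic, that is, independent of the complementary coordinates. This holds because both are defined through the fixed form $V$ alone. Once this is in place, Steps~2 and~3 are bookkeeping with the expansion of $W'^2$.
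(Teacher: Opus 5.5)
Your proposal is correct and follows the paper's proof essentially step for step. You show that $J$ is divisible by $U^3$ and $V^3$ via the two-direction annihilators at each endpoint, write $J=U^3V^3(aW+bU+cV)$, and compare coefficients after substituting $W'=sW+pU+qV$ at $v$ and at $u$; your reduction modulo $V^4\calT_3$ is the same as the paper's step of dividing by $V^3U^2$ and reducing modulo $V$, and equating the two expressions for the $W$-coefficient gives $\lambda_{v,\tau}p=\lambda_{u,\tau}q$ exactly as in the paper.
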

\begin{proof}
At the shared vertex $u$, each face potential differs from $H_u$ by a polynomial divisible by $U^3$; the analogous statement holds at $v$. Thus $J$ is divisible by both $U^3$ and $V^3$. The points are distinct, so $U,V$ are relatively prime. Since $J$ has degree seven,
\[
J=U^3V^3(\alpha W+\beta U+\gamma V).
\]
Subtract the two versions of~\eqref{u7:eq:lambda} at $v$, divide by $V^3U^2$, and reduce modulo $V$ to obtain
\[
U(\alpha W+\beta U)
=\lambda_{v,\tau}W^2-\lambda_{v,\tau'}(sW+pU)^2.
\]
Comparison of the coefficients of $W^2,UW,U^2$ gives
\[
\lambda_{v,\tau'}=\lambda_{v,\tau}/s^2,
\quad\alpha=-2\lambda_{v,\tau}p/s,
\quad\beta=-\lambda_{v,\tau}p^2/s^2.
\]
Comparison at $u$ gives the second transformation law and
\[
\alpha=-2\lambda_{u,\tau}q/s,
\qquad\gamma=-\lambda_{u,\tau}q^2/s^2.
\]
Equality of the two expressions for $\alpha$ yields the last condition in~\eqref{u7:eq:lambda-transition}; the remaining coefficients give~\eqref{u7:eq:full-face-jump}. No division by $p,q$, or by a leading coefficient, has been used.
\end{proof}

Equation~\eqref{u7:eq:neighbor-coordinate} gives $[u,v,w']=s[u,v,w]$. Define
\begin{equation}\label{u7:eq:mu}
\mu_v=[u,v,w]^2\lambda_{v,\tau}.
\end{equation}
Lemma~\ref{u7:lem:transition} shows that this scalar is single-valued on the complete face star of $v$. At a boundary vertex the incident faces form a connected chain, and at an interior vertex they form a closed cycle. Thus $\mu_v$ is independent of the chosen incident face.

Also,
\[
[u,w,w']=-q[u,v,w],\qquad
[v,w,w']=p[u,v,w],
\]
so Lemma~\ref{u7:lem:transition} gives the area-transfer identity on every interior edge:
\begin{equation}\label{u7:eq:area-transfer}
\boxed{\mu_u[u,w,w']+\mu_v[v,w,w']=0.}
\end{equation}
This is used only as a necessary condition; we do not assume every scalar field satisfying it lifts to an actual state.

\begin{lemma}[Scaling point linear forms]\label{u7:lem:scaling}
Replace the point forms by $c_uU,c_vV,c_wW$, with all three constants nonzero, and normalize using the determinant of the coefficient vectors of the rescaled forms. Then
\begin{equation}\label{u7:eq:mu-scaling}
\lambda'_{v,\tau}=\frac{\lambda_{v,\tau}}{c_v^3c_u^2c_w^2},
\qquad \mu'_v=\frac{\mu_v}{c_v}.
\end{equation}
In particular, scaling only neighboring point forms while fixing the central form leaves the central scalar unchanged.
\end{lemma}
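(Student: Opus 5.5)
The plan is to track how both sides of the defining congruence \eqref{u7:eq:lambda} change under the rescaling, and then use multilinearity of the determinant for the area factor in \eqref{u7:eq:mu}. Nothing about the state $(H_v)$ or the face potential $K_\tau$ changes: by Lemma~\ref{u7:lem:face}, $K_\tau$ depends only on the common edge traces, which are intrinsic. Only the coordinate forms used to \emph{read off} the leading coefficient change.

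\textbf{Step 1: the new leading coefficient.} Write $U'=c_uU$, $V'=c_vV$, $W'=c_wW$. Since $c_v\ne0$, the ideal component $V'^4\calT_3$ equals $V^4\calT_3$, so the modulus in \eqref{u7:eq:lambda} is unchanged. The rescaled operators are nonzero multiples of the old ones, so the two-direction annihilator at $v$ is unchanged as well.

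The class of $V^3U^2W^2$ modulo $V^4\calT_3$ is nonzero; this follows from the monomial description in Lemma~\ref{u7:lem:local}. Hence the coefficient $\lambda_{v,\tau}$ in \eqref{u7:eq:lambda} is uniquely determined by the fixed polynomial $H_v-K_\tau$. Substituting gives
\[
\lambda'_{v,\tau}V'^3U'^2W'^2=\lambda'_{v,\tau}\,c_v^3c_u^2c_w^2\,V^3U^2W^2 .
\]
Comparing with the original congruence and using uniqueness yields $\lambda'_{v,\tau}=\lambda_{v,\tau}/(c_v^3c_u^2c_w^2)$.

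\textbf{Step 2: the new area factor and scalar.} The coefficient vectors of $U,V,W$ are the rows $(x_u,y_u,1)$, $(x_v,y_v,1)$, $(x_w,y_w,1)$ of the matrix in \eqref{u7:eq:area}. Scaling the rows by $c_u,c_v,c_w$ multiplies the determinant by $c_uc_vc_w$, so $[u,v,w]'=c_uc_vc_w[u,v,w]$. The normalization in \eqref{u7:eq:mu} uses the square of this determinant, so the sign and the orientation convention are irrelevant. Therefore
\[
\mu'_v=c_u^2c_v^2c_w^2[u,v,w]^2\cdot\frac{\lambda_{v,\tau}}{c_v^3c_u^2c_w^2}=\frac{\mu_v}{c_v}.
\]

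\textbf{Step 3: independence of the incident face.} I should confirm that $\mu'_v$ is still single-valued on the star of $v$. The proof of Lemma~\ref{u7:lem:transition} is purely formal in the independent forms, so it applies verbatim to the rescaled forms with the correspondingly rescaled transition coefficients $s',p',q'$. The relation $[u,v,w']'=s'[u,v,w]'$ also persists. Alternatively, the face-by-face identity $\mu'_v=\mu_v/c_v$ from Step 2 already shows single-valuedness directly, since its right side does not depend on $\tau$.

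The final claim is the special case $c_v=1$: scaling only the neighboring forms leaves the central scalar unchanged. The only delicate point is Step 1, namely that the normal form really determines a scalar independent of the choice of representative. That rests on the nonvanishing of the class of $V^3U^2W^2$ and on the invariance of $V^4\calT_3$, both of which are immediate.
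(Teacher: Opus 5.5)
Your proposal is correct and follows the same route as the paper. The leading monomial $V^3U^2W^2$ acquires the factor $c_v^3c_u^2c_w^2$, while the coefficient determinant acquires $c_uc_vc_w$, so its square in \eqref{u7:eq:mu} leaves exactly the factor $1/c_v$. Your extra justifications (the modulus $V^4\calT_3$ is unchanged, the leading coefficient is unique, and the scalar is single-valued over the star) are all sound; the paper leaves them implicit.
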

\begin{proof}
The leading monomial in~\eqref{u7:eq:lambda} is multiplied by $c_v^3c_u^2c_w^2$, whereas the determinant of the point-form coefficient vectors is multiplied by $c_uc_vc_w$. Substituting its square into~\eqref{u7:eq:mu} gives the result. The constants may be negative; after rescaling, the determinant is computed from the rescaled coefficient vectors.
\end{proof}

\subsubsection{Boundary normalization and detection of nonzero classes}
Both endpoints of a boundary edge have zero state. Each of the other two edges of its incident face also has a boundary endpoint, so all three common traces vanish. Lemma~\ref{u7:lem:face} implies that the face potential vanishes, and hence so do the leading scalars at the boundary endpoints. Applying Lemma~\ref{u7:lem:transition} along each connected boundary-vertex face star gives
\begin{equation}\label{u7:eq:boundary-mu}
\mu_b=0\qquad(b\in V\setminus V^\circ).
\end{equation}

If $g_v\ge3$, Lemma~\ref{u7:lem:local} gives $\calA_v\subseteq V^4\calT_3$, so $\mu_v$ is independent of the representative of the effective class. If $g_v=2$, a properly embedded complete interior star consists of the four rays of two lines: a single ray cannot contain two distinct neighbors, and a complete neighborhood requires both directions of each line. The local polynomial $V^3S^2T^2$ is invisible to all four edge traces but has a nonzero leading coefficient on every incident face. A unique suitable multiple of it can therefore be added to $H_v$ to impose
\begin{equation}\label{u7:eq:two-gauge}
\mu_v=0\qquad(g_v=2).
\end{equation}
This change preserves all common edge traces. Uniqueness of face potentials means it also preserves every $K_\tau$ and all other vertex scalars. Normalizing a two-direction interior scalar to zero therefore only selects a representative of the same effective class, without adding a geometric assumption. We use this normalization throughout. Every nonzero-scalar point subsequently selected has $g_v\ne2$.

\begin{proposition}[Normalized scalars detect the effective class]\label{u7:prop:scalar-detect}
If a normalized state satisfies $\mu_v=0$ at every vertex, then its class in $\calD_\Delta$ is zero. Consequently, every nonzero effective class has nonempty normalized scalar support.
\end{proposition}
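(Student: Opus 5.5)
The plan is to show that all $\mu_v=0$ forces every face potential $K_\tau$ to vanish. Once that holds, each $H_v$ lies in its full local annihilator $\calA_v$, so the class in $\calD_\Delta$ is zero by \eqref{u7:eq:dual-defect}.

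\textbf{Step 1: face jumps vanish.} Assume $\mu_v=0$ at every vertex; at boundary vertices this is automatic by \eqref{u7:eq:boundary-mu}, and at two-direction vertices it is the normalization \eqref{u7:eq:two-gauge}. Since the signed doubled areas are nonzero, \eqref{u7:eq:mu} gives $\lambda_{v,\tau}=0$ for every incident vertex--face pair. Now take two faces $\tau,\tau'$ sharing an interior edge $uv$. Every coefficient in the jump formula \eqref{u7:eq:full-face-jump} carries a factor $\lambda_{v,\tau}$ or $\lambda_{u,\tau}$, so $K_{\tau'}=K_\tau$.

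\textbf{Step 2: all face potentials vanish.} The dual graph of a triangulated disk is connected, so $K_\tau$ equals a single constant $K$ on all faces. The disk has at least one boundary edge. By the boundary argument preceding \eqref{u7:eq:boundary-mu}, the face containing that edge has all three common traces zero. Lemma~\ref{u7:lem:face} then gives $K=0$.

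\textbf{Step 3: the states lie in the annihilators.} Each interior vertex $v$ has an incident face $\tau$, and each edge $e\ni v$ is an edge of some such face. The trace $D_e^3H_v$ equals the common trace on $e$. By Lemma~\ref{u7:lem:face} this trace is $D_e^3K_\tau=0$. So every edge trace of $H_v$ vanishes, which means $H_v\in\calA_v$ by \eqref{u7:eq:annihilator}, and the effective class is zero. The contrapositive gives the second assertion: a nonzero effective class has at least one vertex with $\mu_v\neq0$. That vertex is interior with $g_v\ne2$, because of \eqref{u7:eq:boundary-mu} and \eqref{u7:eq:two-gauge}.

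The main point to check is that the normalization \eqref{u7:eq:two-gauge} leaves the class unchanged, so that the hypothesis applies to a legitimate representative. That is already established: adding $V^3S^2T^2$ changes no edge traces, and uniqueness of face potentials then means no $K_\tau$ changes. The other point to verify is that Lemma~\ref{u7:lem:transition} applies across every interior edge, including chords whose endpoints are both boundary vertices. This holds because the lemma only uses the representation \eqref{u7:eq:lambda}, which holds at boundary vertices with $H_b=0$.
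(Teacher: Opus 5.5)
Your proposal is correct and follows essentially the same route as the paper. The chain is the same: adjacent face potentials agree, then connectivity of the face-adjacency graph together with a zero boundary face forces every $K_\tau=0$, and finally all edge traces vanish so each $H_v\in\calA_v$. The only cosmetic difference is in Step 1. You obtain $K_{\tau'}=K_\tau$ by reading off the explicit jump formula \eqref{u7:eq:full-face-jump} with vanishing leading coefficients. The paper instead argues directly that the jump lies in $U^4\calT_3\cap V^4\calT_3$, so it is a degree-seven multiple of $U^4V^4$ and must vanish.
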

\begin{proof}
The condition $\mu_v=0$ is equivalent to $H_v-K_\tau\in V^4\calT_3$ in~\eqref{u7:eq:lambda}. For two adjacent faces with shared endpoints $u,v$, their potential difference therefore belongs to both $U^4\calT_3$ and $V^4\calT_3$. The nonproportional linear forms $U,V$ are relatively prime. A degree-seven polynomial divisible by $U^4V^4$ must be zero. Thus all adjacent face potentials agree.

The face-adjacency graph of a disk triangulation is connected, and a face incident to a boundary edge has zero potential. Hence every face potential vanishes. Every actual edge belongs to a face, so its common trace vanishes as well. Each interior state belongs to $\calA_v$, and the effective class is zero.
\end{proof}

\begin{remark}\label{u7:rem:scalar-support}
The scalar support $\{v:\mu_v\ne0\}$ and the support of nonzero edge traces are different objects. The equation $\mu_v=0$ does not, at an individual vertex, imply that all traces of $H_v$ vanish. Proposition~\ref{u7:prop:scalar-detect} uses the vanishing of the entire normalized scalar field. All subsequent convex hulls are taken from the scalar support.
\end{remark}

\subsection{The convex-hull star of nonzero scalar support}\label{u7:sec:hull}

Suppose that a nonzero effective class exists, and choose a normalized state representing it. Proposition~\ref{u7:prop:scalar-detect} shows that the finite set
\[
\mathcal S=\{v\in\Delta_0:\mu_v\ne0\}
\]
is nonempty. By \eqref{u7:eq:boundary-mu} and normalization \eqref{u7:eq:two-gauge}, every point of $\mathcal S$ is an interior vertex with $g_v\ge3$; two-direction vertices are already excluded.

Choose $o\notin\mathcal S$ and a farthest point $v\in\mathcal S$ from $o$. The circle centered at $o$ through $v$ bounds a closed disk containing $\mathcal S$; its tangent at $v$ therefore places all other support points strictly on one side, making $v$ a \emph{strictly exposed vertex}. Zero-scalar mesh vertices are still retained in the complete neighbor star.

It suffices to analyze this outermost point $v$. Translate it to the origin and list its neighbors counterclockwise as $r_0,\ldots,r_{k-1}$. Every consecutive ray pair bounds an actual face, so their angles have lifts satisfying
\begin{equation}\label{u7:eq:lifted-angle}
\theta_{i+k}=\theta_i+2\pi,
\qquad0<\theta_{i+1}-\theta_i<\pi.
\end{equation}
Two neighbors cannot lie on the same ray: the nearer would lie in the relative interior of the longer edge.

\begin{lemma}[Opposite rays adjacent to a zero-scalar neighbor]\label{u7:lem:zero-neighbor}
If $\mu_{r_i}=0$, then
\begin{equation}\label{u7:eq:opposite-rays}
\theta_{i+1}-\theta_{i-1}=\pi.
\end{equation}
\end{lemma}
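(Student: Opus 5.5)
The plan is to apply the area-transfer identity \eqref{u7:eq:area-transfer} to the two interior edges joining $v$ to $r_{i-1}$ and to $r_{i+1}$, and to the edge $vr_i$ itself. The statement is about the edge $vr_i$.

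\textbf{Setting up the identity on $vr_i$.} This edge lies between the faces $(v,r_i,r_{i-1})$ and $(v,r_i,r_{i+1})$. It is interior, since $v$ is an interior vertex with a closed neighbor cycle. In the notation of Lemma~\ref{u7:lem:transition}, take $u=r_i$, $w=r_{i-1}$, $w'=r_{i+1}$. The identity then reads
\[
\mu_{r_i}[r_i,r_{i-1},r_{i+1}]+\mu_v[v,r_{i-1},r_{i+1}]=0.
\]

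\textbf{Using the hypotheses.} By hypothesis $\mu_{r_i}=0$. Since $v$ was chosen in $\mathcal S$, we have $\mu_v\ne0$. Hence $[v,r_{i-1},r_{i+1}]=0$, so $v$, $r_{i-1}$, $r_{i+1}$ are collinear.

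\textbf{Identifying the angle.} With $v$ at the origin, collinearity means the rays toward $r_{i-1}$ and $r_{i+1}$ lie on one line. They therefore make an angle of $0$ or $\pi$ modulo $2\pi$.

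By \eqref{u7:eq:lifted-angle}, the lifted difference satisfies $0<\theta_{i+1}-\theta_{i-1}<2\pi$. Each of the two consecutive gaps is positive and less than $\pi$. The value $0$ is therefore excluded unless the two rays coincide. Coincident rays would require $r_{i-1}=r_{i+1}$ or two neighbors on a single ray. The latter was ruled out above, and the former cannot happen when $k\ge3$.

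The only possibility left is $\theta_{i+1}-\theta_{i-1}=\pi$, which is \eqref{u7:eq:opposite-rays}.

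\textbf{Remaining checks.} Two points need care. First, the area-transfer identity must apply even when $r_i$ is a boundary vertex. Here we use the convention $H_b=0$, together with the derivation of \eqref{u7:eq:area-transfer}, which only needs the two faces sharing the edge. Since $v$ is interior, both faces exist.

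Second, we need $k\ge3$. This holds because an interior vertex has at least three neighbors. In fact $g_v\ge3$ forces $k\ge3$.

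I expect the main subtlety to be the sign and labeling conventions in identifying $[v,w,w']$ with the correct determinant. Because only vanishing is used, however, the signs do not matter.
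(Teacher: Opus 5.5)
Your proposal is correct and follows the paper's proof: you apply area transfer on the interior edge $vr_i$, and $\mu_{r_i}=0$ together with $\mu_v\ne0$ forces $[v,r_{i-1},r_{i+1}]=0$, after which the lifted-angle bounds $0<\theta_{i+1}-\theta_{i-1}<2\pi$ leave only the value $\pi$. Your detour about coincident rays and $k\ge3$ is harmless but unnecessary, since these strict bounds already exclude the lifted differences $0$ and $2\pi$.
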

\begin{proof}
The two actual opposite vertices of the edge $vr_i$ are $r_{i-1},r_{i+1}$. The area-transfer equation~\eqref{u7:eq:area-transfer}, together with $\mu_v\ne0$, gives
\[
[v,r_{i-1},r_{i+1}]=0.
\]
Thus these two neighbors are collinear with $v$. They cannot lie on the same ray: the two consecutive faces have positive oriented area, and the lifted angular difference from $r_{i-1}$ to $r_{i+1}$ lies strictly between zero and $2\pi$. The rays are therefore opposite, with angular difference exactly $\pi$.
\end{proof}

\begin{proposition}[Normal form of the complete convex-hull star]\label{u7:prop:hull-form}
After an orientation-preserving affine transformation, the complete neighbor cycle at $v$ has the form
\begin{equation}\label{u7:eq:hull-star}
(1,0)=p_0,\ p_1,\ldots,p_{n-1},\ p_n=(0,1),\ (-b,0),\ (0,-a),
\end{equation}
where $a,b>0$ and $n\ge2$. The first $n+1$ neighbors have nonzero scalars and strictly increasing directions. The intermediate points $p_1,\ldots,p_{n-1}$ lie in the strict first quadrant. The last two neighbors have zero scalars.
\end{proposition}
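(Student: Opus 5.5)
We classify the neighbors of the strictly exposed vertex $v$ by their scalars and use the tangent line to pin down where the nonzero ones can lie. Let $t$ be the tangent line at $v$ to the circle centered at $o$. Every point of $\mathcal S\setminus\{v\}$ lies in the open half-plane $\Pi$ on the side of $o$. The complete star of $v$ surrounds $v$, so its neighbors are not all contained in a closed half-plane through $v$. In particular some neighbors lie strictly on the far side of $t$ or on $t$ itself. All such neighbors have zero scalar, so zero-scalar neighbors exist.

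\textbf{Zero-scalar neighbors come in a single consecutive pair.} Suppose $\mu_{r_i}=0$. Lemma~\ref{u7:lem:zero-neighbor} gives $\theta_{i+1}-\theta_{i-1}=\pi$. First, two consecutive zeros $r_i,r_{i+1}$ cannot be followed by a third zero $r_{i+2}$. Otherwise the lemma at $r_i$ and at $r_{i+1}$ gives $\theta_{i+1}-\theta_{i-1}=\pi$ and $\theta_{i+2}-\theta_i=\pi$. At $r_{i+2}$ it gives $\theta_{i+3}-\theta_{i+1}=\pi$. Chaining these forces a rotation by $2\pi$ with too few neighbors, or a zero-length gap, contradicting~\eqref{u7:eq:lifted-angle}. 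Second, consider an isolated zero $r_i$ whose neighbors $r_{i\pm1}$ are nonzero. The lemma puts $r_{i-1},r_{i+1}$ on opposite rays, hence on a line through $v$. Both are in $\Pi\cup\{v\}$ only if that line is $t$ itself, which is impossible since the open half-plane excludes $t$. So at least one of $r_{i\pm1}$ is zero, and zeros occur in runs of length exactly two. Third, take a run $r_i,r_{i+1}$ of two zeros. The two equalities give $\theta_{i+2}-\theta_{i-1}=2\pi-(\text{rest})$, so the nonzero neighbors $r_{i+2},\dots,r_{i-1+k}$ sweep an angle $\theta_{i-1+k}-\theta_{i+2}$. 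That angle equals $2\pi$ minus the three gaps around the run, and it is less than $\pi$ because $\theta_{i+2}-\theta_i=\pi$. A second run would need another half-turn of span, and the total angle would then exceed $2\pi$. There is therefore exactly one run.

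\textbf{Affine normal form.} Index the neighbors so that the nonzero ones are $r_0,\dots,r_n$ and the zeros are $r_{n+1},r_{n+2}$. The relations $\theta_{n+2}-\theta_n=\pi$ and $\theta_{n+1+k}-\theta_{n+1}$, the latter obtained from $\theta_{n+3}=\theta_{0}+2\pi$ and $\theta_{n+2}-\theta_{0}\ldots$, say that $r_{n+2}$ is opposite $r_n$ and $r_{n+1}$ is opposite $r_0$. Indeed the lemma at $r_{n+1}$ gives $r_n,r_{n+2}$ opposite, and the lemma at $r_{n+2}$ gives $r_{n+1},r_0$ opposite. Since $0<\theta_n-\theta_0<\pi$, the vectors $r_0,r_n$ are independent and positively oriented. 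The orientation-preserving linear map sending $r_0\mapsto(1,0)$ and $r_n\mapsto(0,1)$ places the zeros at $(-b,0)$ and $(0,-a)$ with $a,b>0$. The intermediate directions strictly increase inside $(\theta_0,\theta_n)$, which places $p_1,\dots,p_{n-1}$ in the open first quadrant.

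\textbf{The case $n\ge2$.} This requires excluding $n=1$, where the star is the four rays of two lines and so $g_v=2$. That contradicts $g_v\ge3$ for $v\in\mathcal S$. The main obstacle is the angle bookkeeping in the run-length argument, and especially the proof that an isolated zero is impossible. This must use the strict exposure given by the tangent. I expect to handle it by writing all angles relative to the outward normal of $t$.
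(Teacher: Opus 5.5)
Your outline follows the paper's proof. It uses the opposite-ray lemma, the strict exposing half-plane to rule out an isolated zero, and angle chaining to rule out longer zero runs. It then counts half-turns to show the run is unique, and applies the linear map sending $r_0,r_n$ to $(1,0),(0,1)$. Your complementary-arc version of the half-turn count (span $\pi-(\theta_i-\theta_{i-1})<\pi$) is an adequate substitute for the paper's disjoint flanking intervals. However, two cases of your case analysis are not actually closed.

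\textbf{Three consecutive zeros.} This step does not contradict \eqref{u7:eq:lifted-angle} in the one case that survives. Chaining gives $\theta_{i+3}=\theta_{i-1}+2\pi=\theta_{i-1+k}$, hence $k=4$. The resulting star has angles $\theta_{i-1},\theta_i,\theta_{i-1}+\pi,\theta_i+\pi$, and it satisfies \eqref{u7:eq:lifted-angle} perfectly well. The contradiction has to come from $g_v=2$ against the normalization $g_v\ge3$ on $\mathcal S$. You have this ingredient, but you use it only for $n=1$. The same argument is also what excludes an all-zero star.

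\textbf{The case $n=0$.} You never exclude it. A valence-three star with one nonzero neighbor $r_0$ and zeros $r_1,r_2$ passes your run analysis, and your normal-form step silently assumes $\theta_n-\theta_0>0$. In this case the lemma at $r_1$ gives $\theta_2=\theta_0+\pi$, and the lemma at $r_2$ gives $\theta_1=\theta_3-\pi=\theta_0+\pi$. So both zero neighbors lie on one ray, contradicting strict increase of the lifted angles.

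With these two one-line additions the argument is complete.
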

\begin{proof}
The neighbors cannot all have nonzero scalars: rays lying in a strict half-plane cannot form the complete star~\eqref{u7:eq:lifted-angle}. Nor can all neighbors have zero scalars. Repeated application of~\eqref{u7:eq:opposite-rays} then gives
\[
\theta_{i+3}=\theta_{i-1}+2\pi.
\]
Strict increase and periodicity force the cycle to have length four, with two pairs of opposite rays. This would give $g_v=2$, contrary to the normalization at a vertex with $\mu_v\ne0$.

Call a maximal consecutive sequence of zero-scalar neighbors a zero block. An isolated zero neighbor is impossible: its two nonzero neighbors would be opposite by~\eqref{u7:eq:opposite-rays}, and hence could not lie in the strict exposing half-plane. Three consecutive zero neighbors would likewise force a turn of exactly $2\pi$ in four steps, reducing to the excluded four-ray, two-direction case. The lifted-angle formulation also covers a block that crosses the chosen starting index.

Every zero block therefore has exactly two vertices. If their indices are $i,i+1$, two applications of~\eqref{u7:eq:opposite-rays} give
\begin{equation}\label{u7:eq:zero-block-angle}
\theta_{i+2}-\theta_{i-1}
=\pi+(\theta_i-\theta_{i-1})>\pi.
\end{equation}
For a zero block $r_i,r_{i+1}$ define its \emph{flanking angular interval}
\[
I_i=(\theta_{i-1},\theta_{i+2})
\]
using the lifted angles, with indices shifted by multiples of $k$ when necessary. Equation~\eqref{u7:eq:zero-block-angle} says that $|I_i|>\pi$. Distinct maximal zero blocks are separated by at least one nonzero neighbor. Their flanking intervals are therefore disjoint as open subintervals of one lifted turn: if exactly one nonzero neighbor separates the blocks, it is a common endpoint and is excluded; if more than one separates them, there is a positive gap. Two disjoint open intervals in a circle of total angular length $2\pi$ cannot both have length $>\pi$. Hence there cannot be two zero blocks. There is exactly one pair of consecutive zero neighbors, and the remaining nonzero neighbors form a consecutive block spanning less than a half-turn.

If this nonzero block had only one vertex, both zero neighbors would be forced onto the same opposite ray, violating uniqueness of a neighbor on each ray. If it had two vertices, the whole cycle would have four rays, with the zero neighbors opposite the two nonzero neighbors; again $g_v=2$. Thus the nonzero block has at least three vertices.

Use its first and last actual neighbor vectors as an oriented linear basis, sending them to $(1,0)$ and $(0,1)$. All intermediate rays lie in their positive cone, so their points lie in the strict first quadrant. The opposite-ray conditions at the two zero neighbors then put the last two points at $(-b,0),(0,-a)$, with $a,b>0$. This is~\eqref{u7:eq:hull-star}.
\end{proof}

\begin{figure}[tbp]
\centering
\begin{tikzpicture}[scale=1.14,line join=round,line cap=round]
\coordinate (v) at (0,0);
\coordinate (p0) at (2.35,0);\coordinate (p1) at (2.3,.85);
\coordinate (p2) at (1.4,1.6);\coordinate (pn) at (0,2.35);
\coordinate (b) at (-1.55,0);\coordinate (a) at (0,-1.4);
\draw (p0)--(p1)--(p2)--(pn)--(b)--(a)--cycle;
\foreach \p in {p0,p1,p2,pn,b,a}\draw (v)--(\p);
\draw[densely dashed,black!45] (-1.5,1.5)--(1.5,-1.5);
\foreach \p in {v,p0,p1,p2,pn}\fill (\p) circle (1.8pt);
\foreach \p in {a,b}\draw[fill=white] (\p) circle (2pt);
\node[below left,xshift=-2pt] at (v) {$v$};
\node[below right] at (p0) {$p_0=(1,0)$};
\node[right] at (p1) {$p_1$};\node[above right] at (p2) {$p_i$};
\node[above] at (pn) {$p_n=(0,1)$};
\node[left] at (b) {$(-b,0)$};\node[below] at (a) {$(0,-a)$};
\node[align=left,text width=5.2cm,right] at (4.3,.85)
 {\small Filled points: nonzero scalars.\\[4pt]
 Open points: zero scalars.\\[4pt]
 Exactly two consecutive\\
 exterior neighbors have zero scalars.\\[4pt]
 The full valence $n+3$ is unbounded.};
\end{tikzpicture}
\caption{The complete star at a strictly exposed vertex. The dashed line strictly exposes the support; the sector from $p_0$ to $p_n$ contains all nonzero-scalar neighbors. Equal angles or neighbor distances are not assumed.}
\label{u7:fig:hull}
\end{figure}

Proposition~\ref{u7:prop:hull-form} does not assume that the original domain is convex: the convex hull is used only for the finite set of nonzero-scalar points. Nor is noncollinearity of that support assumed in advance. A singleton or a collinear support still has a strictly exposed vertex, whereas the argument requires at least three noncollinear nonzero neighbors and therefore excludes such supports. It remains to show that~\eqref{u7:eq:hull-star} cannot occur in an actual nonzero state.

\subsection{From a complete star to labeled cyclic cubic data}\label{u7:sec:cyclic}

Fix the compatible state and the star obtained in Proposition~\ref{u7:prop:hull-form}. We now extract cyclic cubic data from their face potentials.

\subsubsection{Signed parameters and normalized point forms}
Write the first group of neighbors as
\begin{equation}\label{u7:eq:radial-points}
p_i=\rho_i(1-t_i,t_i),\qquad
0=t_0<t_1<\cdots<t_n=1,\qquad
\rho_i>0,\quad\rho_0=\rho_n=1.
\end{equation}
For $i=0,\ldots,n-1$, put $h_i=t_{i+1}-t_i$. Then
\begin{equation}\label{u7:eq:positive-lengths}
h_i>0,\qquad \sum_{i=0}^{n-1}h_i=1,
\qquad 0<h_i<1.
\end{equation}
The last inequality uses $n\ge2$.

In the current affine coordinates, let $V$ be the central point form and $\xi,\eta$ the other two dual variables. Make the linear change
\[
X=\xi-\frac Vb,\qquad Y=\eta-\frac Va.
\]
Its determinant is one. The two exterior neighbors have point forms $-bX,-aY$; dividing by their nonzero scalar factors normalizes them to $X,Y$. Dividing each of the first group of point forms by $\rho_i$ gives
\begin{equation}\label{u7:eq:Li}
L_i=(1-t_i)X+t_iY+\zeta_iV,
\qquad
\zeta_i=\frac1{\rho_i}+\frac{1-t_i}{b}+\frac{t_i}{a}.
\end{equation}
In particular,
\begin{equation}\label{u7:eq:positive-endpoints}
\zeta_0=1+\frac1b>0,
\qquad \zeta_n=1+\frac1a>0.
\end{equation}
By Lemma~\ref{u7:lem:scaling}, these normalizations of the neighboring point forms, including the negative rescalings, leave the central scalar unchanged. Multiplying the entire state by a nonzero constant, we may normalize that scalar to one.

Set $N=n+3$, $p_{n+1}=(-b,0)$, and $p_{n+2}=(0,-a)$, and interpret indices cyclically modulo $N$ from now on, so $p_N=p_0$. Assign the two exterior points the \emph{labeled} parameters
\begin{equation}\label{u7:eq:outer-labels}
(t_{n+1},\zeta_{n+1})=(0,0),\qquad
(t_{n+2},\zeta_{n+2})=(1,0).
\end{equation}
Set $L_{n+1}=X$, $L_{n+2}=Y$, and $L_N=L_0$. Thus the first formula in~\eqref{u7:eq:Li} holds at every cyclic index. Set $t_N=t_0$ and define $h_i=t_{i+1}-t_i$ at every index. Thus
\begin{equation}\label{u7:eq:outer-lengths}
h_n=-1,\qquad h_{n+1}=1,\qquad h_{n+2}=-1.
\end{equation}
The numerical parameter values $0,1$ recur at different labels, but these labels represent different actual ray nodes. They must not be merged, and their slopes must not be identified. A negative $h_i$ records only a signed parameter length; it does not alter nondegeneracy of the original face. In coordinates $(X,Y,V)$, the normalized point forms satisfy
\[
\det(V,L_i,L_{i+1})=h_i\ne0.
\]

\subsubsection{Telescoping decomposition of face-potential differences}
Let $K_i$ be the common potential of the $i$th actual face $(v,p_i,p_{i+1})$. Define
\begin{equation}\label{u7:eq:slopes-zeta}
k_i=\frac{\zeta_{i+1}-\zeta_i}{h_i},\qquad
\delta_i=k_i-k_{i-1},
\end{equation}
and
\begin{equation}\label{u7:eq:G-C}
G_i=\frac{V^3L_i^2L_{i+1}^2}{h_i^2},
\qquad
\calC_i=\delta_iL_i^2
\left(\frac{L_{i+1}}{h_i}-\frac{L_{i-1}}{h_{i-1}}\right).
\end{equation}
The central scalar is one, so~\eqref{u7:eq:lambda} implies that $H_v-K_i-G_i$ is divisible by $V^4$.

\begin{lemma}[Exact local difference identity]\label{u7:lem:period-block}
At every cyclic index,
\begin{equation}\label{u7:eq:period-block}
K_i-K_{i-1}=G_{i-1}-G_i+V^4\calC_i,
\end{equation}
and therefore
\begin{equation}\label{u7:eq:Omega}
\Omega(X,Y,V):=\sum_{i=0}^{N-1}\calC_i(X,Y,V)=0.
\end{equation}
\end{lemma}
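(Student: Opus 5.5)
The plan is to derive \eqref{u7:eq:period-block} face pair by face pair from the leading-term representation \eqref{u7:eq:lambda} at the centre $v$. Then \eqref{u7:eq:Omega} follows by summing around the closed neighbor cycle.

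\textbf{Step 1: the two faces at a common edge.} Fix a cyclic index $i$ and the two faces $(v,p_{i-1},p_i)$ and $(v,p_i,p_{i+1})$ sharing the edge $vp_i$. Work in the coordinates $(X,Y,V)$ with the normalized forms $L_j=(1-t_j)X+t_jY+\zeta_jV$, where $\det(V,L_j,L_{j+1})=h_j$.

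\begin{itemize}
\item By Lemma~\ref{u7:lem:scaling} and the normalization $\mu_v=1$, the leading coefficient of $H_v-K_j$ in these forms is $\lambda_{v,j}=1/h_j^2$. Hence $H_v-K_j-G_j\in V^4\calT_3$ for every $j$, including the two labeled exterior indices.
\item I would state this carefully: \eqref{u7:eq:mu} uses the determinant of the coefficient vectors of the rescaled forms. Signs of the $h_j$ therefore enter only through $h_j^2$, and the negative values in \eqref{u7:eq:outer-lengths} cause no trouble.
\item Subtracting the relations for $j=i-1$ and $j=i$ gives $K_i-K_{i-1}-(G_{i-1}-G_i)\in V^4\calT_3$.
\item Define $\widetilde{\calC}_i$ as the cubic with $K_i-K_{i-1}=G_{i-1}-G_i+V^4\widetilde{\calC}_i$. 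It remains to show $\widetilde{\calC}_i=\calC_i$.
\end{itemize}

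\textbf{Step 2: identifying the cubic.} Lemma~\ref{u7:lem:transition} gives the shape of $J=K_i-K_{i-1}$, with $U=L_i$ and $W=L_{i-1}$:
\[
J=L_i^3V^3(\alpha L_{i-1}+\beta L_i+\gamma V).
\]
Both $J$ and $G_{i-1}-G_i$ are divisible by $L_i^2V^3$. Since $L_i$ and $V$ are coprime, $\widetilde{\calC}_i=L_i^2\,m_i$ for some linear form $m_i$.

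To find $m_i$, set $A=L_{i+1}/h_i$ and $B=L_{i-1}/h_{i-1}$. Dividing by $L_i^2V^3$, the identity becomes
\[
V m_i=L_i(\alpha L_{i-1}+\beta L_i+\gamma V)+A^2-B^2 .
\]
I would factor $A^2-B^2=(A-B)(A+B)$ and split off the $X,Y$ parts of the forms, $M_j=(1-t_j)X+t_jY$.
\begin{itemize}
\item The relation $M_{j+1}-M_j=h_j(Y-X)$ gives $M_{i+1}/h_i-M_i/h_i=M_i/h_{i-1}-M_{i-1}/h_{i-1}$.
\item Consequently $A+B\equiv L_i\,(1/h_i+1/h_{i-1})\pmod V$.
\item The $V$-coefficient of $A-B$ involves $\zeta_{i+1}/h_i-\zeta_{i-1}/h_{i-1}$, and it combines with $\zeta_i$ into $k_i-k_{i-1}=\delta_i$.
\end{itemize}
The unknown constants $\alpha,\beta,\gamma$ are pinned down by requiring the right-hand side to be divisible by $V$. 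This is the compatibility condition $\lambda_{v,\tau}p=\lambda_{u,\tau}q$ of Lemma~\ref{u7:lem:transition}, together with its formulas for $\alpha,\beta,\gamma$ in terms of the expansion $L_{i+1}=sL_{i-1}+pL_i+qV$. Carrying this out should give $m_i=\delta_i(A-B)$, which is exactly \eqref{u7:eq:G-C}.

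\textbf{Step 3: telescoping.} Summing \eqref{u7:eq:period-block} over $i=0,\dots,N-1$, the left side telescopes to $K_{N-1+1}-K_{-1}=0$ by cyclicity. The $G$ terms also telescope to zero. Hence $V^4\Omega=0$, and since $\calT$ is a domain, $\Omega=0$.

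\textbf{Main obstacle.} I expect the main difficulty to be the bookkeeping in Step 2. One must check that the $V$-linear part cancels exactly to $\delta_i(A-B)$, uniformly at every cyclic index, including the two exterior labels with $\zeta=0$ and $h=\pm1$. These labels must be kept distinct rather than merged. I would first verify the identity at a generic interior index by expanding in $(X,Y,V)$. I would then recheck it at $i=n,n+1,n+2$ and $i=0$ using \eqref{u7:eq:outer-labels} and \eqref{u7:eq:outer-lengths}.
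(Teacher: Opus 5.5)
Your route is essentially the paper's: apply Lemma~\ref{u7:lem:transition} at the edge $vp_i$, use its compatibility relation to remove the neighbor's leading coefficient, factor a difference of squares, and telescope. Steps~1 and~3 are fine. Step~2, however, misdescribes how the constants are determined, and as you state it, it would not close.

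\textbf{Divisibility by $V$ does not fix $\gamma$.} Divisibility of the right-hand side by $V$ is just the centre-side comparison, which Step~1 already guarantees. It determines $\alpha$ and $\beta$, because $L_{i-1},L_i$ are independent modulo $V$. It says nothing about $\gamma$, since $\gamma VL_i$ is divisible by $V$ anyway.

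\textbf{Correcting the $A\pm B$ bookkeeping.} It is $A+B$, not $A-B$, whose $V$-coefficient collapses to $\delta_i$. Dividing $L_{i+1}-sL_{i-1}-pL_i=qV$ by $h_i$ gives exactly $A+B=(h_i^{-1}+h_{i-1}^{-1})L_i+\delta_iV$; this is the paper's factorization rescaled. Then
\[
Vm_i=L_i\ell+\delta_iV(A-B),\qquad \ell=\alpha L_{i-1}+\beta L_i+\gamma V+(h_i^{-1}+h_{i-1}^{-1})(A-B).
\]
Divisibility by $V$ only yields $\ell=cV$, that is, $\widetilde\calC_i=\calC_i+cL_i^3$ with $c$ undetermined.

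\textbf{What kills $c$.} You need the neighbor-side formula $\gamma=-\lambda_{p_i}q^2/s^2$ from the comparison at $p_i$. You also need the compatibility relation $\lambda_cp=\lambda_{p_i}q$, with $\lambda_c=1/h_{i-1}^2$. That relation comes from equating the centre-side and neighbor-side expressions for $\alpha$; it is \emph{not} a consequence of divisibility by $V$.

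Substitute the product rather than solving for $\lambda_{p_i}$:
\[
\gamma=-(\lambda_{p_i}q)q/s^2=-pq/h_i^2 .
\]
This avoids dividing by $q=h_i\delta_i$, which may vanish. Together with $\alpha=-2\lambda_cp/s$ and $\beta=-\lambda_cp^2/s^2$, a one-line check gives
\[
\ell=p\,h_i^{-2}\bigl(L_{i+1}-sL_{i-1}-pL_i-qV\bigr)=0,
\]
so $c=0$ and $m_i=\delta_i(A-B)$. Only $h_{i-1}h_i\ne0$ is used, so the argument is uniform in $i$, and the separate checks at the exterior labels that you planned are unnecessary.
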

\begin{proof}
Temporarily write $a=h_{i-1}$ and $b=h_i$, and set
\[
s=-b/a,\qquad p=b\delta_i,\qquad q=1+b/a.
\]
Comparing the three point-form coefficients gives
\begin{equation}\label{u7:eq:Li-transition}
L_{i+1}=sL_{i-1}+pV+qL_i.
\end{equation}
Apply Lemma~\ref{u7:lem:transition} at the shared edge $vp_i$, taking the two shared point forms to be $V,L_i$ and the opposite point forms to be $L_{i-1},L_{i+1}$. The transition coefficients are exactly the $s,p,q$ in~\eqref{u7:eq:Li-transition}. The central leading coefficient on the preceding face is $1/a^2$. If $\lambda_i$ denotes the leading coefficient at the neighbor $p_i$ on that face, the last relation in~\eqref{u7:eq:lambda-transition} (equivalently the area-transfer identity) gives the product identity
\[
\lambda_i p=\frac{q}{a^2}.
\]
Substituting this product, together with $s=-b/a$, into the full jump formula~\eqref{u7:eq:full-face-jump} gives directly
\begin{equation}\label{u7:eq:radial-jump}
K_i-K_{i-1}
=-\frac q{b^2}V^3L_i^3(sL_{i-1}+L_{i+1}).
\end{equation}
Thus no division by $p,q$, by $\lambda_i$, or by any neighboring scalar is used; the formula remains valid when any of those quantities vanishes.

Subtract $G_{i-1}-G_i$ from the right-hand side and factor out $V^3L_i^2/b^2$. The remaining quadratic is
\begin{align*}
&L_{i+1}^2-s^2L_{i-1}^2-qL_i(sL_{i-1}+L_{i+1})\\
&\qquad=(L_{i+1}-sL_{i-1}-qL_i)(L_{i+1}+sL_{i-1})\\
&\qquad=pV(L_{i+1}+sL_{i-1}).
\end{align*}
Using $p=b\delta_i$ and $s=-b/a$ identifies the difference as $V^4\calC_i$. This proves~\eqref{u7:eq:period-block}, including the cases in which $\delta_i,p$, or $q$ vanishes. On the complete closed star, the cyclic differences of both $K_i$ and $G_i$ sum to zero. Hence $V^4\Omega=0$, and~\eqref{u7:eq:Omega} follows.
\end{proof}

\subsubsection{Cyclic cubics from a single central state}
The leading-term relation gives polynomials $F_i\in\calT_3$ such that
\begin{equation}\label{u7:eq:Fi}
H_v-K_i-G_i=V^4F_i(X,Y,V).
\end{equation}
The edge operator through the center and the $i$th neighbor annihilates $V,L_i$, and the operator through the $(i+1)$st neighbor annihilates $V,L_{i+1}$. The third-order traces of $H_v-K_i$ on these two edges vanish. The same is true for $G_i$, since the factor on which differentiation can act has degree only two. Thus the third-order operators associated with the two radial edges annihilate $F_i$.

In $(X,Y,V)$ coordinates, the $i$th direction operator can be scaled to
\[
D_i=-t_i\partial_X+(1-t_i)\partial_Y.
\]
For a homogeneous cubic $F_i$, one has $D_i^3F_i=6F_i(-t_i,1-t_i,0)$. Define
\begin{equation}\label{u7:eq:fi}
f_i(t)=F_i(-t,1-t,0).
\end{equation}
Then
\begin{equation}\label{u7:eq:fi-endpoints}
f_i(t_i)=f_i(t_{i+1})=0.
\end{equation}
These cubics are indexed by the cyclic order of actual faces. Distinct signed intervals can overlap, so the polynomials need not define an ordinary single-valued piecewise function on the real axis.

Equations~\eqref{u7:eq:period-block} and~\eqref{u7:eq:Fi} also give the full polynomial difference
\begin{equation}\label{u7:eq:Fi-jump}
F_i-F_{i-1}=-\calC_i.
\end{equation}
Let $\calC_i^{[j]}(t)$ be the coefficient of $V^j$ in $\calC_i(-t,1-t,V)$. Expanding \eqref{u7:eq:G-C} gives
\begin{equation}\label{u7:eq:C0}
\calC_i^{[0]}(t)=\delta_i\left[
\left(\frac1{h_i}-\frac1{h_{i-1}}\right)(t_i-t)^3
+2(t_i-t)^2\right].
\end{equation}
Consequently,
\begin{equation}\label{u7:eq:cubic-jump}
f_i-f_{i-1}=-\calC_i^{[0]}.
\end{equation}
The right-hand side has a double zero at $t_i$, so the derivatives of the two polynomials at the common labeled node agree: $f'_{i-1}(t_i)=f'_i(t_i)$. Set
\begin{equation}\label{u7:eq:ri}
r_i=f_{i-1}'(t_i)=f_i'(t_i).
\end{equation}

\subsection{A quadratic companion and the closed-star flux contradiction}\label{u7:sec:flux}

We prove that the star~\eqref{u7:eq:hull-star} cannot carry a state whose central scalar is one. The argument applies uniformly to every $n\ge2$.

\subsubsection{The original Hermite equations and two exterior nodes}
On the signed interval from $t_-$ to $t_+=t_-+h$, the unique cubic with zero endpoint values and endpoint slopes $r_-,r_+$ is
\begin{equation}\label{u7:eq:hermite-poly}
f(t)=r_-u-\frac{2r_-+r_+}{h}u^2
+\frac{r_-+r_+}{h^2}u^3,
\qquad u=t-t_-.
\end{equation}
Only $h\ne0$ is required. The derivatives are
\begin{equation}\label{u7:eq:hermite-derivatives}
f''(t_-)=-\frac{4r_-+2r_+}{h},\quad
f''(t_+)=\frac{2r_-+4r_+}{h},\quad
f'''=\frac{6(r_-+r_+)}{h^2}.
\end{equation}

\begin{lemma}[Cyclic recurrence and flux]\label{u7:lem:hermite-recurrence}
The actual cyclic data satisfy
\begin{align}
\frac{r_{i-1}}{h_{i-1}}
+2\left(\frac1{h_{i-1}}+\frac1{h_i}\right)r_i
+\frac{r_{i+1}}{h_i}&=2\delta_i,
\label{u7:eq:hermite-A}\\
\frac{r_i+r_{i+1}}{h_i^2}
-\frac{r_{i-1}+r_i}{h_{i-1}^2}
&=\delta_i\left(\frac1{h_i}-\frac1{h_{i-1}}\right).
\label{u7:eq:hermite-B}
\end{align}
For $i=0,\ldots,n$,
\begin{equation}\label{u7:eq:alternating-rec}
\frac{r_{i+1}}{h_i}=\frac{r_{i-1}}{h_{i-1}}.
\end{equation}
while the two exterior nodes give
\begin{equation}\label{u7:eq:outer-r}
r_{n+1}=r_0-2\zeta_0,\qquad
r_{n+2}=r_n-2\zeta_n.
\end{equation}
Moreover,
\begin{equation}\label{u7:eq:flux}
\boxed{\sum_{i=0}^{n-1}\frac{r_i+r_{i+1}}{h_i}=r_0+r_n.}
\end{equation}
\end{lemma}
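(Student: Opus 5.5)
The plan is to read everything off the single jump relation \eqref{u7:eq:cubic-jump}, $f_i-f_{i-1}=-\calC_i^{[0]}$, together with the Hermite description of each $f_i$. Each $f_i$ is a cubic with zeros at $t_i,t_{i+1}$ by \eqref{u7:eq:fi-endpoints}. By \eqref{u7:eq:ri} its endpoint slopes are $r_i,r_{i+1}$. Since $h_i\ne0$ at every cyclic index (including the signed exterior lengths \eqref{u7:eq:outer-lengths}), $f_i$ is the Hermite cubic \eqref{u7:eq:hermite-poly} on the signed interval from $t_i$ to $t_{i+1}$, and \eqref{u7:eq:hermite-derivatives} gives its second and third derivatives.

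\emph{Deriving \eqref{u7:eq:hermite-A} and \eqref{u7:eq:hermite-B}.} We differentiate \eqref{u7:eq:C0} at $t=t_i$: the second derivative of $\calC_i^{[0]}$ there is $4\delta_i$, and its constant third derivative is $-6\delta_i(1/h_i-1/h_{i-1})$. Comparing second derivatives of $f_i$ and $f_{i-1}$ at $t_i$ gives
\[
-\frac{4r_i+2r_{i+1}}{h_i}-\frac{2r_{i-1}+4r_i}{h_{i-1}}=-4\delta_i,
\]
which is \eqref{u7:eq:hermite-A} after dividing by $-2$. Comparing third derivatives gives \eqref{u7:eq:hermite-B} directly. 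No sign assumption on $h_i$ enters, so both identities hold at every cyclic index.

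\emph{Alternating recurrence \eqref{u7:eq:alternating-rec} and exterior nodes \eqref{u7:eq:outer-r}.} To eliminate $\delta_i$, we multiply \eqref{u7:eq:hermite-A} by $(1/h_i-1/h_{i-1})$ and subtract twice \eqref{u7:eq:hermite-B}. The $r_i$ terms cancel identically, and one should obtain
\[
\Bigl(\frac1{h_{i-1}}+\frac1{h_i}\Bigr)\Bigl(\frac{r_{i-1}}{h_{i-1}}-\frac{r_{i+1}}{h_i}\Bigr)=0.
\]
The main point to check is that the first factor is nonzero for $i=0,\ldots,n$. For $1\le i\le n-1$ both lengths are positive by \eqref{u7:eq:positive-lengths}. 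For $i=0$ and $i=n$ the factor is $1/h-1$ with $0<h<1$, using $h_{-1}=h_{n+2}=-1$ and $h_n=-1$. This is exactly where $n\ge2$ is used. At $i=n+1,n+2$ the factor vanishes, so there we use \eqref{u7:eq:hermite-A} itself. We compute $k_n=\zeta_n$, $k_{n+1}=0$, $k_{n+2}=-\zeta_0$ from \eqref{u7:eq:outer-labels}, which give $\delta_{n+1}=-\zeta_n$ and $\delta_{n+2}=-\zeta_0$. With $(h_n,h_{n+1})=(-1,1)$, \eqref{u7:eq:hermite-A} collapses to $r_{n+2}-r_n=-2\zeta_n$. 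With $(h_{n+1},h_{n+2})=(1,-1)$ it collapses to $r_{n+1}-r_0=-2\zeta_0$. This is \eqref{u7:eq:outer-r}.

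\emph{Flux \eqref{u7:eq:flux}.} We sum \eqref{u7:eq:hermite-A} over all $N$ cyclic indices. The right side is $2\sum\delta_i=0$, because the $k_i$ telescope around the closed cycle. Reindexing the left side gives $3\sum_{j}(r_j+r_{j+1})/h_j$, so this cyclic sum vanishes. Splitting off the exterior indices $j=n,n+1,n+2$ with $h=-1,1,-1$, their contribution is
\[
-(r_n+r_{n+1})+(r_{n+1}+r_{n+2})-(r_{n+2}+r_0)=-(r_0+r_n),
\]
and the remaining sum over $j=0,\ldots,n-1$ equals $r_0+r_n$. The only delicate step is the bookkeeping of the signed exterior lengths and labels: the values at the two exterior nodes must not be merged with $r_0,r_n$. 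Everything else is a direct calculation.
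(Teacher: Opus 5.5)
Your proposal is correct and follows the paper's proof essentially step for step. You compare second and third derivatives of the jump relation \eqref{u7:eq:cubic-jump} using the Hermite formulas \eqref{u7:eq:hermite-derivatives}, then eliminate $\delta_i$ to reach the factored recurrence, whose factor is nonzero for $i=0,\ldots,n$. At $i=n+1,n+2$ you substitute the signed exterior lengths directly, and you sum \eqref{u7:eq:hermite-A} cyclically for the flux; your explicit elimination (multiply \eqref{u7:eq:hermite-A} by $1/h_i-1/h_{i-1}$ and subtract twice \eqref{u7:eq:hermite-B}) simply spells out the step the paper states without detail, giving the same identity up to a constant factor.
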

\begin{proof}
Use \eqref{u7:eq:hermite-derivatives} to compare second and third derivatives in \eqref{u7:eq:cubic-jump}, obtaining \eqref{u7:eq:hermite-A}--\eqref{u7:eq:hermite-B}. Eliminating $\delta_i$ leaves exactly
\begin{equation}\label{u7:eq:factor-recurrence}
\frac{h_{i-1}+h_i}{2h_{i-1}h_i}
\left(\frac{r_{i+1}}{h_i}-\frac{r_{i-1}}{h_{i-1}}\right)=0.
\end{equation}
For $1\le i\le n-1$, both lengths are positive. At $i=0$ their sum is $-1+h_0$, and at $i=n$ it is $h_{n-1}-1$; these are also nonzero by~\eqref{u7:eq:positive-lengths}. Thus~\eqref{u7:eq:alternating-rec} holds for $i=0,\ldots,n$.

At $i=n+1,n+2$, however, the adjacent signed lengths sum to zero, so this division is not allowed. Equations~\eqref{u7:eq:outer-labels}--\eqref{u7:eq:slopes-zeta} give
\[
\delta_{n+1}=-\zeta_n,\qquad
\delta_{n+2}=-\zeta_0.
\]
Substituting the exterior lengths $(-1,1,-1)$ directly into~\eqref{u7:eq:hermite-A} yields the two identities in~\eqref{u7:eq:outer-r}.

Finally, sum~\eqref{u7:eq:hermite-A} over all cyclic indices. The right-hand side vanishes because $\sum_i\delta_i=0$. Each signed-interval term $(r_i+r_{i+1})/h_i$ occurs three times on the left, so
\[
\sum_{i=0}^{N-1}\frac{r_i+r_{i+1}}{h_i}=0.
\]
The three exterior terms are
\[
-(r_n+r_{n+1})+(r_{n+1}+r_{n+2})-(r_{n+2}+r_0)
=-r_n-r_0.
\]
Rearranging gives \eqref{u7:eq:flux}.
\end{proof}

\begin{remark}
Equation~\eqref{u7:eq:outer-r} does not identify slopes at the repeated parameter values $t_{n+1}=t_0$ or $t_{n+2}=t_n$: these nodes occupy different positions in the cycle. Such an identification would discard the two genuine endpoint constraints carried by the original complete star.
\end{remark}

\subsubsection{The quadratic companion and its jump}
Let $\zeta_i(t)=\zeta_i+k_i(t-t_i)$ be the affine radial interpolant on the $i$th signed interval. Define
\begin{equation}\label{u7:eq:quadratic-companion}
Q_i(t)=\zeta_i(t)f_i'(t)-3k_if_i(t)
-\bigl(f_i'(t)\bigr)^2+\frac32f_i(t)f_i''(t).
\end{equation}
We call this the \emph{quadratic companion polynomial} of the cubic segment. Indeed, if $f=At^3+Bt^2+Ct+D$, then
\begin{equation}\label{u7:eq:hessian-identity}
(f')^2-\frac32ff''
=(B^2-3AC)t^2+(BC-9AD)t+C^2-3BD.
\end{equation}
The cubic terms in $\zeta_i(t)f_i'-3k_if_i$ also cancel, so $\deg Q_i\le2$.

Extracting the coefficient linear in $V$ from~\eqref{u7:eq:G-C} gives
\begin{equation}\label{u7:eq:C1}
\begin{split}
\calC_i^{[1]}(t)=\delta_i\Bigg[&
\left\{3\zeta_i\left(\frac1{h_i}-\frac1{h_{i-1}}\right)
+k_i+k_{i-1}\right\}(t_i-t)^2\\
&+4\zeta_i(t_i-t)\Bigg].
\end{split}
\end{equation}

\begin{lemma}[Quadratic jump identity]\label{u7:lem:quadratic-jump}
Under the Hermite equations~\eqref{u7:eq:hermite-A}--\eqref{u7:eq:hermite-B},
\begin{equation}\label{u7:eq:quadratic-jump}
\boxed{
Q_i-Q_{i-1}-\calC_i^{[1]}
=\frac32\delta_i
\left(\frac{r_{i-1}}{h_{i-1}}-\frac{r_{i+1}}{h_i}\right)(t-t_i)^2.
}
\end{equation}
This identity also holds at both exterior nodes.
\end{lemma}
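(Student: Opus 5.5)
The identity \eqref{u7:eq:quadratic-jump} is an equality of quadratic polynomials in $t$. I will prove it by expanding both companions about the common node $t_i$ and then comparing the coefficients of $(t-t_i)^0$, $(t-t_i)^1$ and $(t-t_i)^2$. Degree at most two is already known for $Q_{i-1}$ and $Q_i$, and for $\calC_i^{[1]}$ by \eqref{u7:eq:C1}.

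Put $u=t-t_i$. For $f_i$ on the interval $(t_i,t_{i+1})$, formula \eqref{u7:eq:hermite-poly} gives $f_i(t_i)=0$, $f_i'(t_i)=r_i$, $f_i''(t_i)=-(4r_i+2r_{i+1})/h_i$ and $f_i'''=6(r_i+r_{i+1})/h_i^2$. For $f_{i-1}$, which has right endpoint $t_i$, \eqref{u7:eq:hermite-derivatives} gives $f_{i-1}'(t_i)=r_i$, $f_{i-1}''(t_i)=(2r_{i-1}+4r_i)/h_{i-1}$ and $f_{i-1}'''=6(r_{i-1}+r_i)/h_{i-1}^2$. The affine interpolants are $\zeta_i(t)=\zeta_i+k_iu$ and $\zeta_{i-1}(t)=\zeta_i+k_{i-1}u$, because $\zeta_{i-1}(t_i)=\zeta_{i-1}+k_{i-1}h_{i-1}=\zeta_i$.

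Since $Q$ has degree at most two, its Taylor coefficients at $t_i$ are determined by the values of $f,f',f''$ and of the constant $f'''$ there:
\begin{itemize}
\item $Q(t_i)=\zeta_i r_i-r_i^2$ for both segments, so the constant term of the difference $Q_i-Q_{i-1}$ vanishes. This matches $\calC_i^{[1]}(t_i)=0$ and the right-hand side.
\item $Q'(t_i)=\zeta_i f''+k r_i-3k r_i-2r_if''+\tfrac32 r_if''=(\zeta_i-\tfrac12 r_i)f''-2kr_i$.
\item $\tfrac12Q''(t_i)$ is the corresponding combination involving $f''^2$, $kf''$, $\zeta_if'''$ and $r_if'''$. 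It is read off by differentiating \eqref{u7:eq:quadratic-companion} twice; the $ff''''$-type terms are absent and $f(t_i)=0$.
\end{itemize}

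In the linear coefficient, I will replace the jump $f_i''(t_i)-f_{i-1}''(t_i)$ by its value from \eqref{u7:eq:cubic-jump}. From \eqref{u7:eq:C0}, $\calC_i^{[0]}$ has second derivative $4\delta_i$ at $t_i$, so the jump equals $-4\delta_i$. This replacement is \eqref{u7:eq:hermite-A} rewritten. The linear coefficient of $Q_i-Q_{i-1}$ is then $-4\delta_i(\zeta_i-\tfrac12 r_i)-2r_i(k_i-k_{i-1})=-4\delta_i\zeta_i$. This agrees with the $u$-coefficient $-4\zeta_i\delta_i$ of $\calC_i^{[1]}$ in \eqref{u7:eq:C1}, so the linear coefficient of the left side vanishes, as the right side requires.

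The main obstacle is the quadratic coefficient. I will use \eqref{u7:eq:hermite-B} for the jump of $f'''$, namely $6\delta_i(1/h_i-1/h_{i-1})$, and \eqref{u7:eq:hermite-A} for the jump of $f''$. For the products $f''^2$ I will use the difference-of-squares split $f_i''^2-f_{i-1}''^2=(f_i''-f_{i-1}'')(f_i''+f_{i-1}'')$. This makes every term proportional to $\delta_i$. The surviving combination of $r_{i-1},r_i,r_{i+1}$ must then reduce to $\tfrac32(r_{i-1}/h_{i-1}-r_{i+1}/h_i)$ plus exactly the terms $\{3\zeta_i(1/h_i-1/h_{i-1})+k_i+k_{i-1}\}$ of \eqref{u7:eq:C1}. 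In that reduction I will use $f_i''(t_i)+f_{i-1}''(t_i)=2(r_{i-1}/h_{i-1}-r_{i+1}/h_i)+4r_i(1/h_{i-1}-1/h_i)$, and I will absorb the $r_i$ pieces with the $f'''$-jump term; this bookkeeping is where care is needed. No step divides by $h_{i-1}+h_i$ or by any slope; only $h_{i-1},h_i\ne0$ is used. Hence the computation applies verbatim at the exterior nodes $i=n+1,n+2$, where the signed lengths are $-1,1,-1$ and \eqref{u7:eq:factor-recurrence} could not be divided out. This gives the final assertion.
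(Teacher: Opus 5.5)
Your proposal is correct and is essentially the paper's proof. Both arguments compare the value, first derivative and second derivative of the degree-two difference at the node, using the jumps $f_i''(t_i)-f_{i-1}''(t_i)=-4\delta_i$ and $f_i'''-f_{i-1}'''=6\delta_i(1/h_i-1/h_{i-1})$ from \eqref{u7:eq:cubic-jump}, and both split $(f'')^2$ as a difference of squares. Both also divide only by $h_{i-1},h_i$, which is why the exterior nodes are covered.

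The only difference is cosmetic and lies in the final reduction. You expand $f_i''(t_i)+f_{i-1}''(t_i)$ explicitly and cancel its $r_i$ part against the $f'''$-jump term. The paper instead rewrites the bracket as $6r_i(1/h_i-1/h_{i-1})+3f_{i-1}''(t_i)-6\delta_i$ and then substitutes \eqref{u7:eq:hermite-A}.
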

\begin{proof}
All three polynomials on the left have degree at most two. It therefore suffices to compare their values and first two derivatives at the node. Translate the node to $t_i=0$. Write $a,b$ for the left and right signed lengths, $r$ for the common slope, $r_-,r_+$ for the outer endpoint slopes, $z$ for the radial value at the node, and $l,k$ for the left and right radial slopes. Put $\delta=k-l$. Subscripts $-,+$ refer to the two cubic segments, and $[g]=g_+(0)-g_-(0)$.

Since $f_\pm(0)=0$ and $f_\pm'(0)=r$, one has $Q_+(0)=Q_-(0)=zr-r^2$, while $\calC^{[1]}(0)=0$. The original polynomial difference gives
\[
[f'']=-4\delta,\qquad
[f''']=6\delta(1/b-1/a).
\]
Differentiating~\eqref{u7:eq:quadratic-companion} and evaluating at the node gives
\[
Q'=(z-r/2)f''-2kr,
\qquad
Q''=(z+r)f'''-kf''-\tfrac12(f'')^2.
\]
Consequently $[Q']=-4z\delta=(\calC^{[1]})'(0)$, so the linear coefficients agree.

Set $M=f_-''(0)=(2r_-+4r)/a$, $P=f_+''(0)=-(4r+2r_+)/b$, and $d=1/b-1/a$. Since $P-M=-4\delta$,
\begin{align*}
[Q'']-(\calC^{[1]})''
&=6r\delta d-kP+lM-\tfrac12(P^2-M^2)-2\delta(k+l)\\
&=\delta\{6rd+P+2M-2\delta\}\\
&=\delta\{6rd+3M-6\delta\}.
\end{align*}
Substitution of~\eqref{u7:eq:hermite-A}, namely
\[
2\delta=r_-/a+2(1/a+1/b)r+r_+/b,
\]
reduces the last line to $3\delta(r_-/a-r_+/b)$. This is the second derivative of the right-hand side of~\eqref{u7:eq:quadratic-jump}. The values and first two derivatives agree, proving the identity. Only the nonzero lengths $a,b$ have been used as divisors; no division by $a+b$ occurs.
\end{proof}

\Needspace{10\baselineskip}
\subsubsection{Endpoint reduction of the full period and the main proof}
For a polynomial in $V$, the notation $[V]$ extracts its coefficient of $V$.
\begin{proposition}[Endpoint flux identity]\label{u7:prop:endpoint}
For the actual complete star above,
\begin{equation}\label{u7:eq:endpoint-polynomial}
\begin{split}
[V]\Omega(-t,1-t,V)
={}&3\zeta_0(r_0-\zeta_0)(t-1)^2\\
&-3\zeta_n(r_n-\zeta_n)t^2.
\end{split}
\end{equation}
Consequently, $\Omega=0$ forces
\begin{equation}\label{u7:eq:endpoint-positive}
r_0=\zeta_0>0,\qquad r_n=\zeta_n>0.
\end{equation}
\end{proposition}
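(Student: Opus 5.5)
The plan is to evaluate $[V]\Omega(-t,1-t,V)$ through the quadratic companions of Lemma~\ref{u7:lem:quadratic-jump}, so that no individual $\calC_i^{[1]}$ has to be computed. By definition, $[V]\Omega(-t,1-t,V)=\sum_{i=0}^{N-1}\calC_i^{[1]}(t)$. By Lemma~\ref{u7:lem:quadratic-jump}, which holds at every cyclic index including the two exterior nodes,
\[
\calC_i^{[1]}(t)=Q_i(t)-Q_{i-1}(t)-\frac32\delta_i\left(\frac{r_{i-1}}{h_{i-1}}-\frac{r_{i+1}}{h_i}\right)(t-t_i)^2 .
\]
The quadratics $Q_i$ are indexed by the faces of the closed star, so their cyclic differences telescope to zero. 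Only the boxed correction terms survive:
\[
[V]\Omega(-t,1-t,V)=-\frac32\sum_{i=0}^{N-1}\delta_i\left(\frac{r_{i-1}}{h_{i-1}}-\frac{r_{i+1}}{h_i}\right)(t-t_i)^2 .
\]

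Next I would remove most of this sum. By the alternating recurrence~\eqref{u7:eq:alternating-rec}, the bracket vanishes for $i=0,\ldots,n$, so only the exterior indices $i=n+1$ and $i=n+2$ remain. Their data are all explicit:
\begin{itemize}
\item $t_{n+1}=0$ and $t_{n+2}=1$, from~\eqref{u7:eq:outer-labels};
\item $h_n=-1$, $h_{n+1}=1$, $h_{n+2}=-1$, from~\eqref{u7:eq:outer-lengths};
\item $\delta_{n+1}=-\zeta_n$ and $\delta_{n+2}=-\zeta_0$, as computed in the proof of Lemma~\ref{u7:lem:hermite-recurrence};
\item $r_{n+1}=r_0-2\zeta_0$ and $r_{n+2}=r_n-2\zeta_n$, from~\eqref{u7:eq:outer-r}, with $r_{N}=r_0$ cyclically.
\end{itemize}

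Substituting these values into the two surviving terms gives the following.
\begin{itemize}
\item For $i=n+1$ the bracket is $-r_n-(r_n-2\zeta_n)=-2(r_n-\zeta_n)$. The term becomes $-\tfrac32(-\zeta_n)(-2)(r_n-\zeta_n)t^2=-3\zeta_n(r_n-\zeta_n)t^2$.
\item For $i=n+2$ the bracket is $(r_0-2\zeta_0)+r_0=2(r_0-\zeta_0)$. The term becomes $3\zeta_0(r_0-\zeta_0)(t-1)^2$.
\end{itemize}
Adding the two gives~\eqref{u7:eq:endpoint-polynomial}.

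For the consequence, Lemma~\ref{u7:lem:period-block} gives $\Omega=0$, so the right-hand side of~\eqref{u7:eq:endpoint-polynomial} vanishes identically. Since $t^2$ and $(t-1)^2$ are linearly independent, both coefficients vanish. Because $\zeta_0,\zeta_n>0$ by~\eqref{u7:eq:positive-endpoints}, this forces $r_0=\zeta_0$ and $r_n=\zeta_n$, which is~\eqref{u7:eq:endpoint-positive}.

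I expect the main obstacle to be justifying the use of Lemma~\ref{u7:lem:quadratic-jump} at the exterior nodes. There the adjacent signed lengths sum to zero, so the recurrence cannot be used. I would check carefully that the proof of Lemma~\ref{u7:lem:quadratic-jump} divides only by $a,b$ and never by $a+b$, and that~\eqref{u7:eq:hermite-A} holds there unchanged. A second point to verify is that the $Q_i$ telescope with the labeled exterior nodes kept distinct, so that $Q_{-1}=Q_{N-1}$ and no slopes at repeated parameter values are identified.
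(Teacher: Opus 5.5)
Your proposal is correct and follows the paper's proof essentially step for step. You telescope the quadratic jump identity of Lemma~\ref{u7:lem:quadratic-jump} around the closed cycle, use \eqref{u7:eq:alternating-rec} to discard the nodes $i=0,\ldots,n$, and evaluate the two exterior contributions with $\delta_{n+1}=-\zeta_n$, $\delta_{n+2}=-\zeta_0$ and \eqref{u7:eq:outer-r}. Your use of the linear independence of $t^2$ and $(t-1)^2$ is equivalent to the paper's evaluation at $t=0$ and $t=1$.
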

\begin{proof}
Sum~\eqref{u7:eq:quadratic-jump} around the complete cycle. The differences $Q_i-Q_{i-1}$ telescope, leaving $-[V]\Omega(-t,1-t,V)$ on the left. For $i=0,\ldots,n$, the right-hand side vanishes by~\eqref{u7:eq:alternating-rec}. Only the two exterior nodes remain.

At $i=n+1$, one has $t_i=0$ and $\delta_i=-\zeta_n$, and
\[
\frac{r_{i-1}}{h_{i-1}}-\frac{r_{i+1}}{h_i}
=-r_n-r_{n+2}=2(\zeta_n-r_n).
\]
At $i=n+2$, one has $t_i=1$ and $\delta_i=-\zeta_0$, and
\[
\frac{r_{i-1}}{h_{i-1}}-\frac{r_{i+1}}{h_i}
=r_{n+1}+r_0=2(r_0-\zeta_0).
\]
Substitution into the summed identity and rearrangement give~\eqref{u7:eq:endpoint-polynomial}. The full period vanishes by~\eqref{u7:eq:Omega}. Evaluating at $t=0,1$ and using the strict positivity in~\eqref{u7:eq:positive-endpoints} yields~\eqref{u7:eq:endpoint-positive}.
\end{proof}

\begin{theorem}[Exclusion of the complete convex-hull star]\label{u7:thm:local-obstruction}
The complete star of Proposition~\ref{u7:prop:hull-form} cannot arise from a compatible state with nonzero normalized central scalar.
\end{theorem}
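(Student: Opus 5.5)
The argument combines the endpoint positivity of Proposition~\ref{u7:prop:endpoint} with the alternating recurrence~\eqref{u7:eq:alternating-rec} and the flux identity~\eqref{u7:eq:flux}; this is step~4 of the outline in Section~\ref{u7:sec:intro}. Assume a compatible state exists on the star~\eqref{u7:eq:hull-star} with central scalar normalized to one. Lemma~\ref{u7:lem:period-block} gives $\Omega=0$, so Proposition~\ref{u7:prop:endpoint} yields $r_0=\zeta_0>0$ and $r_n=\zeta_n>0$. The aim is to propagate positivity to every inner slope $r_1,\dots,r_{n-1}$ and then contradict~\eqref{u7:eq:flux}.

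\textbf{Propagation.} For $1\le i\le n-1$, the recurrence~\eqref{u7:eq:alternating-rec} says $r_{i+1}/h_i=r_{i-1}/h_{i-1}$, and all inner lengths are positive by~\eqref{u7:eq:positive-lengths}. Hence $r_{i+1}$ and $r_{i-1}$ have the same sign, so slopes of equal index parity share a sign. The even-index chain starts from $r_0>0$. If $n$ is odd, the odd-index chain ends at $r_n>0$, and both parity classes are positive. If $n$ is even, $r_0$ and $r_n$ lie in the same class, and the odd class must be treated separately; this is the main obstacle.

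\textbf{Odd class when $n$ is even.} I would first use the recurrence at the end nodes $i=0$ and $i=n$, which holds by Lemma~\ref{u7:lem:hermite-recurrence}. There $h_{-1}=h_{n+2}=-1$ and $h_n=-1$, giving $r_1/h_0=-r_{n+2}$ and $-r_{n+1}=r_{n-1}/h_{n-1}$. By~\eqref{u7:eq:outer-r} and the endpoint values, $r_{n+1}=r_0-2\zeta_0=-\zeta_0$ and $r_{n+2}=r_n-2\zeta_n=-\zeta_n$. Thus
\[
r_1=h_0\zeta_n>0,\qquad r_{n-1}=h_{n-1}\zeta_0>0.
\]
This settles the odd chain for every $n$, so the parity split is not actually needed. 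In fact the recurrence at all nodes $0,\dots,n$ gives the explicit products $r_{i+1}=(h_i/h_{i-1})r_{i-1}$, so every $r_i$ with $0\le i\le n$ is strictly positive.

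\textbf{Contradiction.} Write $S=\sum_{i=0}^{n-1}(r_i+r_{i+1})/h_i$. Since $0<h_i<1$ and all $r_i>0$, each term satisfies $(r_i+r_{i+1})/h_i>r_i+r_{i+1}$. Therefore
\[
S>\sum_{i=0}^{n-1}(r_i+r_{i+1})\ge r_0+r_n,
\]
where the last step keeps only $r_0$ from the first term and $r_n$ from the last and drops the other positive summands. This contradicts the flux identity~\eqref{u7:eq:flux}, which says $S=r_0+r_n$. Hence no such state exists.

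The delicate point is the end-node bookkeeping: the signed exterior lengths must be substituted correctly into~\eqref{u7:eq:alternating-rec} at $i=0$ and $i=n$, and the exterior labels must not be merged. The rest of the argument is an immediate positivity estimate.
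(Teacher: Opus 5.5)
Your proposal is correct and is essentially the paper's proof. Both use the endpoint values $r_0=\zeta_0>0$ and $r_n=\zeta_n>0$ from Proposition~\ref{u7:prop:endpoint}, then the recurrence at $i=0$ with $h_{n+2}=-1$ and $r_{n+2}=-\zeta_n$ to get $r_1=h_0\zeta_n>0$, then forward propagation through the positive-coefficient recurrence, and finally the strict estimate from $0<h_i<1$ against the flux identity~\eqref{u7:eq:flux}. Your parity discussion and the extra use of the recurrence at $i=n$ to get $r_{n-1}>0$ are valid but redundant.
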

\begin{proof}
Suppose that it does, and normalize the central scalar to one. Proposition~\ref{u7:prop:endpoint} gives $r_0=\zeta_0>0$ and $r_n=\zeta_n>0$. Equation~\eqref{u7:eq:outer-r} gives $r_{n+2}=-\zeta_n$. Taking $i=0$ in~\eqref{u7:eq:alternating-rec} and using $h_{n+2}=-1$, we obtain
\[
\frac{r_1}{h_0}=\frac{r_{n+2}}{-1}=\zeta_n>0.
\]
Thus $r_0,r_1>0$. For $1\le i\le n-1$, the recurrence
\[
r_{i+1}=\frac{h_i}{h_{i-1}}r_{i-1}
\]
has a positive coefficient, so every $r_0,\ldots,r_n$ is strictly positive.

Since $0<h_i<1$,
\begin{align*}
\sum_{i=0}^{n-1}\frac{r_i+r_{i+1}}{h_i}
&>\sum_{i=0}^{n-1}(r_i+r_{i+1})\\
&=r_0+r_n+2\sum_{i=1}^{n-1}r_i
>r_0+r_n.
\end{align*}
This contradicts the closed-star flux identity \eqref{u7:eq:flux}.
\end{proof}

\begin{proof}[Proof of Theorem~\ref{thm:universal}: excluding nonzero row relations]
Fix any legal real-coordinate realization. If the actual target matrix $M_7$ has dependent rows, some $a\ne0$ satisfies $a^{\mathsf T}M_7=0$. Equations~\eqref{u7:eq:row-functional}--\eqref{u7:eq:dual-defect} give a nonzero effective class in $\calD_\Delta$. Choose a representative and impose the two-direction normalization \eqref{u7:eq:two-gauge}, preserving both the actual target functional defined by $a$ and the nonzero class.

Proposition~\ref{u7:prop:scalar-detect} gives nonempty normalized scalar support. The farthest-point circle and tangent in Section~\ref{u7:sec:hull} select a strictly exposed vertex; Proposition~\ref{u7:prop:hull-form} gives its complete convex-hull star. All potentials, scalars, and neighbors come from the original state, so the cyclic construction in Section~\ref{u7:sec:cyclic} applies. Theorem~\ref{u7:thm:local-obstruction} excludes this star using Hermite recurrence, the quadratic companion, and closed-cycle flux, a contradiction.

Thus no $a\ne0$ satisfies $a^{\mathsf T}M_7=0$. All $m$ rows of $M_7$ are independent, and $\Phi_7$ is surjective. Equation~\eqref{u7:eq:epsilon-coker} gives $\eps_7=0$. Using the actual target dimension \eqref{u7:eq:target-dimension} and \eqref{u7:eq:def-epsilon} yields
\[
\dim S_7^2
=L_7^2
=36+15E-m
=36+15E-30I+3N_2+N_3.
\]
The realization was arbitrary, so Theorem~\ref{thm:universal} follows.
\end{proof}

\section{Proof of generic quintic attainment}\label{app:g5}

\subsection{Proof outline: a full-rank realization and generic attainment}\label{zhg5:proof-sketch}

Fix an abstract disk triangulation $\Delta$ admitting a legal straight-line realization. The cofactor criterion in Section~\ref{zhg5:enhanced-map} identifies quintic attainment with surjectivity of the original cofactor map $\Phi_5$. We construct a legal full-rank realization for each combinatorial triangulation and then deduce generic attainment as follows.
\begin{enumerate}
\item \textbf{Pass to an enhanced matrix.} Keep one quadratic cofactor on each physical edge, record its two endpoint values, and record the homogeneous degree-four and degree-five components of the local expansion at every interior vertex. This gives a fixed-size matrix $\mathcal B_\Delta$. Endpoint values control the cubic component; the higher blocks control degrees four and five. Surjectivity of $\mathcal B_\Delta$ therefore implies that of $\Phi_5$; see Lemma~\ref{zhg5:lift}.
\item \textbf{Supply the constraints introduced by insertion.} Three quadratic cofactors define a local map $\Gamma$. Its six endpoint-value or directional-derivative conditions are independent. Eleven modes in their common kernel give a higher-degree matrix of determinant $120\alpha\beta\ne0$. Thus $\Gamma$ is surjective, allowing all new local targets to be specified; see Lemma~\ref{zhg5:local-surjectivity}.
\item \textbf{Return from the collision limit to a legal full-rank realization.} Put $w=a+t\mathbf d$. Source and target projections decompose the collision limit into the old map and the local problem. Their surjectivity gives surjectivity at the limit. Continuity of a nonzero minor and invertibility of the transformations for $t\ne0$ give surjectivity for sufficiently small legal $t>0$. The parameter $t=0$ is used only for a matrix limit.
\item \textbf{Cover every combinatorial triangulation by deletion and restoration.} For triangular outer boundary, delete an interior vertex of valence three, four, or five and retriangulate its link by a fan. Induct on interior vertices, then restore the deleted vertex near the fan root in a legal direction, preserving surjectivity. Complete a general polygonal boundary to a triangle and restrict to a matrix $[\mathcal B_\Delta\ \ 0]$ to obtain a surjective realization of the original disk.
\item \textbf{Use a product polynomial for generic attainment.} Incident-edge collinearity determinants give $D_\Delta$, with $D_\Delta(p)\ne0$ exactly characterizing ordinary realizations. A full-rank realization supplies a nonzero full-row minor $F_\Delta$. Thus $H_{\Delta,5}=D_\Delta F_\Delta\not\equiv0$. Whenever $D_\Delta(p)F_\Delta(p)\ne0$, Lemma~\ref{zhg5:lift} and the cofactor criterion in Section~\ref{zhg5:enhanced-map} give $\dim S_5^2=L_5^2$, as required by the genericity convention in Section~\ref{sec:phase-definitions}. Section~\ref{zhg5:realization-space} details the construction.
\end{enumerate}

The combinatorial induction constructs the required full-rank realization; the product of the ordinary-position polynomial and a nonzero minor witnesses the generic conclusion. Throughout this appendix, smoothness is $C^2$, degree is five, edge forms are cubed, and each physical edge has one shared quadratic cofactor.

\subsection{Realization space and construction of witness polynomials}\label{zhg5:realization-space}

The space $\mathcal R_\Delta$ consists of straight-line planar embeddings: vertices are distinct, triangles are nondegenerate, two simplices intersect only in a common face or not at all, the support is a closed polygonal disk, and no unsplit edge contains another vertex in its relative interior. Here we call these \emph{legal realizations}, distinguishing them from degenerate collision coordinates. Convexity is not required, and consecutive boundary edges may be collinear. All vertices, including boundary vertices, may move. Legality persists under sufficiently small coordinate perturbations, so each legal realization has a legal coordinate neighborhood.

\paragraph{The ordinary-position polynomial $D_\Delta$.}
For the full-rank construction, introduce the auxiliary position condition
\[
g_v(p)=\deg_\Delta(v),
\]
at every interior vertex $v$. Such a realization $p$ is called \emph{ordinary}: no two incident edges at an interior vertex have the same supporting line. Fix the vertex numbering and let $N_\Delta(v)$ be the neighbor set. For $u,w\in N_\Delta(v)$ with $u<w$, set
\[
\omega_{v;u,w}(p)
=\det(p_u-p_v,p_w-p_v)
=(x_u-x_v)(y_w-y_v)-(y_u-y_v)(x_w-x_v).
\]
This factor is nonzero precisely when $vu$ and $vw$ are not collinear. Define
\begin{equation}\label{zhg5:ordinary-polynomial}
D_\Delta(p)=\prod_{v\in V^\circ}
\prod_{\substack{u,w\in N_\Delta(v)\\u<w}}
\omega_{v;u,w}(p).
\end{equation}
An empty product is $1$. Every factor is a nonzero polynomial in the full vertex coordinates. Thus $D_\Delta\not\equiv0$, and for every legal realization $p$,
\[
D_\Delta(p)\ne0
\quad\Longleftrightarrow\quad
g_v(p)=\deg_\Delta(v)\quad\text{for every interior vertex }v.
\]
Only incident edges at interior vertices are constrained; arbitrary triples among all vertices need not be noncollinear.

\paragraph{The surjectivity polynomial $F_\Delta$.}
The enhanced matrix $\mathcal B_\Delta(p)$ below has fixed size and polynomial entries in all vertex coordinates. Geometric induction constructs, for each $\Delta$, a legal $p_*$ with $\mathcal B_\Delta(p_*)$ of full row rank. Let $s_\Delta$ be its row count. Choose $s_\Delta$ columns to form a square submatrix $\mathcal B_{\Delta,J}$ with nonzero determinant at $p_*$, and define
\begin{equation}\label{zhg5:rank-polynomial}
F_\Delta(p)=\det\mathcal B_{\Delta,J}(p),
\qquad F_\Delta(p_*)\ne0.
\end{equation}
Therefore $F_\Delta\not\equiv0$. At any realization with $F_\Delta(p)\ne0$, the enhanced matrix has full row rank.

\paragraph{Generic attainment on $D_\Delta F_\Delta\ne0$.}
Take
\begin{equation}\label{zhg5:witness-polynomial}
H_{\Delta,5}=D_\Delta F_\Delta
\end{equation}
as the witness for genericity in Section~\ref{sec:phase-definitions}. Products of nonzero polynomials are nonzero, so $H_{\Delta,5}\not\equiv0$. On any legal realization $p$ satisfying
\[
D_\Delta(p)F_\Delta(p)\ne0
\]
the factor $D_\Delta(p)\ne0$ guarantees ordinary position and $F_\Delta(p)\ne0$ guarantees enhanced surjectivity. Lemma~\ref{zhg5:lift} and the cofactor criterion in Section~\ref{zhg5:enhanced-map} give
\[
\dim S_5^2(\Delta,p)=L_5^2(\Delta,p).
\]
This is generic attainment in Billera's polynomial definition, with a witness on the full vertex-coordinate space.

We use the elementary fact that a nonzero real polynomial cannot vanish identically on a nonempty open coordinate neighborhood. To avoid finitely many polynomial zero sets inside a legal neighborhood, multiply their nonzero defining polynomials and choose a point where the product is nonzero.

These polynomial conditions are imposed on the full vertex-coordinate space, including boundary coordinates.

\subsection{The enhanced quadratic-cofactor map}\label{zhg5:enhanced-map}

We use the vertex and edge sets fixed in Section~\ref{sec:splines-lower-bound}; in particular, $\Delta_1$ includes boundary edges. The space $\PP_2$ of polynomials of degree at most two has dimension six. At a vertex $v$, let $\HH_{v,k}$ be the degree-$k$ homogeneous polynomials in the translated variable $x-v$. Expand in $x-v$, and let $\tau_v^{4:5}$ extract the homogeneous degree-four and degree-five components. Thus
\[
 \dim(\HH_{v,4}\oplus\HH_{v,5})=5+6=11.
\]
In monomial coordinates, these eleven coefficients correspond to eleven rows of the enhanced matrix.
Choose a nonzero affine equation $\ell_e=0$ for each edge and coherent interior-edge orientations. The original quadratic-cofactor map is
\begin{equation}\label{zhg5:A5}
 \Phi_5:\bigoplus_{e\in E^\circ}\PP_2
 \longrightarrow\bigoplus_{v\in V^\circ}U_{v,5},
 \qquad
 (\Phi_5q)_v=\sum_{e\ni v}\epsilon_{v,e}\ell_e^3q_e,
 \quad U_{v,5}=\sum_{e\ni v}\ell_e^3\PP_2.
\end{equation}
An edge joining two interior vertices has one source $q_e$, appearing with opposite incidence signs at its endpoints. The classical cofactor count identifies the defect above Schumaker's actual lower bound with $\dim\coker \Phi_5$ \citep[Sections 2--3]{MourrainVillamizar13}.

For induction, introduce a quadratic cofactor on every edge, including auxiliary boundary sources, and record its two scalar endpoint values. Define
\begin{equation}\label{zhg5:B}
 \begin{split}
 \mathcal B_\Delta:\bigoplus_{e\in\Delta_1}\PP_2
 &\longrightarrow
 \bigoplus_{v\in V^\circ}(\HH_{v,4}\oplus\HH_{v,5})
 \oplus\bigoplus_{e=[u,v]\in\Delta_1}(\RR\oplus\RR),\\
 (q_e)_e&\longmapsto
 \left(\left\{\tau_v^{4:5}\sum_{e\ni v}\epsilon_{v,e}\ell_e^3q_e\right\}_v,
       \{q_e(u),q_e(v)\}_e\right).
 \end{split}
\end{equation}
Incidence signs in the higher-degree blocks follow coherent vertex-circuit orientations. Boundary-edge columns contribute only to endpoint records; no boundary-vertex compatibility equation is introduced. Source and target dimensions are $6|\Delta_1|$ and $11|V^\circ|+2|\Delta_1|$.

Choose $\ell_e(x)=\det(v-u,x-u)$ for $e=[u,v]$. In source monomials and translated target monomials, $\mathcal B_\Delta$ is a fixed-size coordinate-polynomial matrix, so all its minors are vertex-coordinate polynomials.

\begin{lemma}[Lifting the original cofactor map]\label{zhg5:lift}
If $\mathcal B_\Delta$ is surjective, then $\Phi_5$ is surjective.
\end{lemma}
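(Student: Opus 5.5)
The plan is to show that every target element of $\Phi_5$ at an interior vertex $v$ can be reproduced from the data recorded by $\mathcal B_\Delta$. The key step is to split a polynomial of degree at most five, expanded at $v$, into its homogeneous components of degrees $0,\dots,5$ in $x-v$. Every element of $U_{v,5}=\sum_{e\ni v}\ell_e^3\PP_2$ vanishes to order three at $v$, because each $\ell_e$ with $e\ni v$ vanishes at $v$. Hence only the degree-three, four and five components survive, and $U_{v,5}\subseteq\HH_{v,3}\oplus\HH_{v,4}\oplus\HH_{v,5}$. For $e\ni v$ the form $\ell_e$ is homogeneous linear in $x-v$. The degree-three component of $\ell_e^3q_e$ is therefore $\ell_e^3q_e(v)$, which is controlled by the endpoint value $q_e(v)$. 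The degree-four and five components are controlled by $\tau_v^{4:5}$.

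Next, fix a target $(y_v)_{v\in V^\circ}$ with $y_v\in U_{v,5}$. I will write $y_v=\sum_{e\ni v}\epsilon_{v,e}\ell_e^3c_{v,e}$ for some $c_{v,e}\in\PP_2$; this is possible by the definition of $U_{v,5}$. The degree-three part of $y_v$ is then $\sum_{e\ni v}\epsilon_{v,e}c_{v,e}(v)\ell_e^3$. By surjectivity, choose $(q_e)_{e\in\Delta_1}$ with three properties. First, $\tau_v^{4:5}\sum_{e\ni v}\epsilon_{v,e}\ell_e^3q_e=\tau_v^{4:5}y_v$ at every interior vertex. Second, $q_e(v)=c_{v,e}(v)$ for each interior endpoint $v$ of $e$. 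Third, all boundary endpoint records and all boundary-edge values are prescribed to be zero; this choice is harmless. A single edge joining two interior vertices $u,v$ receives two independent endpoint prescriptions, $q_e(u)=c_{u,e}(u)$ and $q_e(v)=c_{v,e}(v)$. Both are legitimate coordinates of the target of $\mathcal B_\Delta$, so no consistency problem arises.

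Then I will check that $(\Phi_5 q)_v=y_v$. Here $q$ is restricted to interior edges, and boundary-edge sources must not contribute. The higher components agree by construction, and the degree-three components agree by the endpoint values. The degree-zero through two components are zero on both sides. The main obstacle is that boundary edges incident to an interior vertex cannot occur. If a boundary edge contained an interior vertex, that vertex would lie on the boundary, which is impossible. So a boundary edge has two boundary endpoints and never appears in any sum $\sum_{e\ni v}$ for interior $v$. Its columns therefore only affect endpoint records, and dropping them leaves the interior-vertex blocks unchanged. Hence $\Phi_5$ is surjective onto $\bigoplus_v U_{v,5}$.
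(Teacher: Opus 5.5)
Your proposal is correct and is essentially the paper's own argument. You decompose each $y_v\in U_{v,5}$ locally, use the endpoint records $q_e(v)$ to fix the degree-three component and $\tau_v^{4:5}$ to fix the degree-four and degree-five components, and conclude equality because both sides have degree at most five and vanish to order three at $v$; the boundary-edge sources are discarded since no boundary edge meets an interior vertex.
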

\begin{proof}
Given $F_v\in U_{v,5}$, choose a local decomposition independently at every interior vertex:
\[
 F_v=\sum_{e\ni v}\epsilon_{v,e}\ell_e^3q_{v,e},
 \qquad q_{v,e}\in\PP_2.
\]
For each endpoint $v$ of each edge, prescribe $q_e(v)=q_{v,e}(v)$ when $v$ is interior, and any value when $v$ is on the boundary. Every incident $\ell_e$ vanishes at $v$, so the cubic homogeneous component of $\sum_e\epsilon_{v,e}\ell_e^3q_e$ in $x-v$ depends only on these values. Prescribe its degree-four and degree-five components to match those of $F_v$.

Surjectivity in \eqref{zhg5:B} provides \emph{one quadratic $q_e$ per edge} satisfying all prescriptions. The assembled polynomial has no terms below degree three at $v$, and its components of degrees three, four, and five agree with $F_v$. Its total degree is at most five, so it equals $F_v$. Discarding boundary-edge auxiliary sources affects no interior target. Thus \eqref{zhg5:A5} is surjective.
\end{proof}

\subsection{The three-cofactor collision map}\label{zhg5:collision-gate}

We prove a local statement involving only quadratic cofactors over characteristic zero. Put
\[
 H=X-Y,\qquad b=(0,\beta),\qquad c=(\alpha,0),
 \qquad\alpha\beta\ne0.
\]
Write $\HH_k(X,Y)$ for the real homogeneous polynomials of degree $k$ in $X,Y$. Use $Z$ as homogenizing variable and identify $f\in\PP_2$ with $Z^2f(X/Z,Y/Z)$. Evaluations and derivatives are taken in the affine chart $Z=1$; $D_{(1,1)}$ is the directional derivative along $(1,1)$. In the homogeneous coefficient basis below, identify $\HH_4\oplus\HH_5$ with $Z\HH_4\oplus\HH_5$. Define
\begin{equation}\label{zhg5:Gamma}
 \begin{split}
 \Gamma:\PP_2^3&\longrightarrow
 \HH_4(X,Y)\oplus\HH_5(X,Y)\oplus\RR^6,\\
 (p,q,\rho)&\longmapsto
 \bigl(\tau^{4:5}[H^3p+X^3q+Y^3\rho],
 p(0),D_{(1,1)}p(0),q(0),q(b),\rho(0),\rho(c)\bigr).
 \end{split}
\end{equation}
Here $\tau^{4:5}$ extracts affine degrees four and five. Source and target dimensions are $18$ and $11+6=17$.

The six scalar coordinates form three pairs. The short edge gives collision records $p(0),D_{(1,1)}p(0)$; the other two merged edge pairs give endpoint-difference coordinates $q(0),q(b)$ and $\rho(0),\rho(c)$. A difference coordinate means one endpoint record restricted to the kernel of the sum of the corresponding records, after retaining that sum as the old coordinate. Section~\ref{zhg5:star-comparison} spells out this invertible choice and its target projection.

\begin{lemma}[Explicit local surjectivity]\label{zhg5:local-surjectivity}
For all nonzero real $\alpha,\beta$, the map $\Gamma$ is surjective.
\end{lemma}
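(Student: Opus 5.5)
The plan follows the outline in Section~\ref{zhg5:proof-sketch}: split $\Gamma$ into a six-dimensional scalar part and an eleven-dimensional higher-degree part. The six scalar conditions $p(0),D_{(1,1)}p(0),q(0),q(b),\rho(0),\rho(c)$ are linear functionals on $\PP_2^3$, and each pair involves only one of the three cofactors. For $p$ the two functionals are evaluation at the origin and a directional derivative there. For $q$ they are evaluations at the distinct points $0$ and $b$ (here $\beta\ne0$ is used), and similarly for $\rho$ with $0$ and $c$. Each pair is therefore independent on its own copy of $\PP_2$. Since the copies are disjoint, the six functionals are independent. Their common kernel $\mathcal K\subset\PP_2^3$ has dimension $18-6=12$.

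It then suffices to show that $\tau^{4:5}[H^3p+X^3q+Y^3\rho]$ maps $\mathcal K$ onto $\HH_4\oplus\HH_5$, which has dimension $11$. Once this holds, surjectivity of $\Gamma$ follows by block triangularity. First prescribe the scalar data using a preimage outside $\mathcal K$, then correct the higher-degree part using elements of $\mathcal K$, which leave the scalar data unchanged.

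To prove this, I will write explicit bases. The kernel for $p$ consists of quadratics with $p(0)=0$ and $D_{(1,1)}p(0)=0$: the linear mode $X-Y=H$ together with $X^2,XY,Y^2$, giving four modes. The kernel for $q$ consists of quadratics vanishing at $0$ and $b=(0,\beta)$: $X$, $X^2$, $XY$, and $Y^2-\beta Y$. Symmetrically, the kernel for $\rho$ is spanned by $Y$, $Y^2$, $XY$, and $X^2-\alpha X$. That gives twelve modes. Multiplying by $H^3,X^3,Y^3$ and taking the degree-four and degree-five components gives eleven target rows. I will select eleven of the twelve modes, for instance dropping one degree-two mode, and compute the resulting $11\times11$ determinant.

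The degree-five rows come only from the quadratic parts of the cofactors: $H^3\HH_2+X^3\HH_2+Y^3\HH_2$ spans $\HH_5$, by a small Vandermonde-type check with three distinct directions. The degree-four rows come from the linear parts $H\cdot H^3=H^4$, $X^4$, $Y^4$, together with the $-\beta Y^4$-type and $-\alpha X^4$-type contributions of the modes $Y^2-\beta Y$ and $X^2-\alpha X$. These contributions are what introduce $\alpha$ and $\beta$. Ordering rows by degree gives a block-triangular structure, and the determinant should reduce to a constant multiple of $\alpha\beta$; the sketch predicts $120\alpha\beta$. The main obstacle is this determinant. The linear parts alone yield only $H^4,X^4,Y^4$, which are three of the five needed quartic directions. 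The remaining two quartic directions, such as $X^3Y$ and $XY^3$, must come from the mixed modes: $XY$ times $X^3$ and $Y^3$ lands in degree five, so the needed quartic contributions arise only through the inhomogeneous modes. Getting these to close up requires careful bookkeeping, and if the chosen eleven modes fail I would swap in a different quadratic mode. Finally, the nonvanishing of the determinant for $\alpha\beta\ne0$ gives surjectivity for all nonzero real $\alpha,\beta$.
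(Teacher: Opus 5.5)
Your route is the paper's: the six scalar functionals split over the three copies of $\PP_2$ and are independent, the common kernel $\mathcal K$ is twelve-dimensional, and eleven kernel modes give a block-triangular $11\times11$ matrix on $\HH_4\oplus\HH_5$. The proposal stops short of showing that this matrix is nonsingular, and that nonsingularity is the entire content of the lemma. One line of your bookkeeping is also wrong at exactly that point. The inhomogeneous modes contribute $X^3(-\beta Y)=-\beta X^3Y$ and $Y^3(-\alpha X)=-\alpha XY^3$ to the quartic rows, not ``$-\beta Y^4$-type'' and ``$-\alpha X^4$-type'' terms. With the latter, the quartic rows would only see $H^4,X^4,Y^4$ and the matrix would be singular. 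Your later sentence about $X^3Y$ and $XY^3$ is the correct one.

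The computation closes quickly, as follows.
\begin{itemize}
\item In your basis, keep all five modes with a nonzero linear part: $p=H$, $q=X$, $q=Y^2-\beta Y$, $\rho=Y$, $\rho=X^2-\alpha X$. Only these meet the five quartic rows. Their quartic images $H^4,X^4,-\beta X^3Y,Y^4,-\alpha XY^3$ have determinant $\pm6\alpha\beta$, since only $H^4$ has an $X^2Y^2$ term.
\item The other six columns are pure quadratic modes with zero quartic part, which gives the block-triangular form.
\item The seven pure quadratic images $H^3X^2,H^3XY,H^3Y^2,X^5,X^4Y,XY^4,Y^5$ span $\HH_5$. Their relation space is spanned by
\[
H^3(3X^2+4XY+3Y^2)=3X^5-5X^4Y+5XY^4-3Y^5,
\]
in which all seven coefficients are nonzero. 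So dropping any one of them leaves a nonsingular quintic block, and no swapping of modes is ever needed.
\item The constant depends on the basis, so do not ``predict'' $120\alpha\beta$. The paper takes $p\in\{HX,HY\}$, getting $\det T_0=-20$ and total $120\alpha\beta$. Keeping your $p\in\{X^2,XY\}$ gives $\det T_0=\pm6$ and total $\pm36\alpha\beta$. Only nonvanishing for $\alpha\beta\ne0$ matters.
\end{itemize}
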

\begin{proof}
The six endpoint functionals map onto $\RR^6$: a value and a directional derivative of $p$ at zero are independent; the two values of $q$ at $0,b$ are independent; likewise those of $\rho$ at $0,c$. The three quadratic sources are independent. It remains to reach the entire eleven-dimensional higher-degree target while all six endpoint functionals vanish.

The common kernel of the six endpoint functionals has dimension twelve.
We select the following eleven modes in this kernel and show that their
higher-degree images form a basis of the eleven-dimensional target.
The first five modes have $Z$-degree one, and the last six are independent
of $Z$:
\begin{equation}\label{zhg5:modes}
 \begin{array}{c|lll@{\qquad}c|lll}
 j&p&q&\rho&j&p&q&\rho\\\hline
 1&0&XZ&0&6&0&X^2&0\\
 2&0&Y(Y-\beta Z)&0&7&0&XY&0\\
 3&HZ&0&0&8&HX&0&0\\
 4&0&0&YZ&9&HY&0&0\\
 5&0&0&X(X-\alpha Z)&10&0&0&Y^2\\
 &&&&11&0&0&XY
 \end{array}
\end{equation}
The higher-degree image of $(p,q,\rho)$ is $H^3p+X^3q+Y^3\rho$. Order the eleven target monomials as
\[
 X^4Z,\ X^3YZ,\ X^2Y^2Z,\ XY^3Z,\ Y^4Z,\quad
 X^5,\ X^4Y,\ X^3Y^2,\ X^2Y^3,\ XY^4,\ Y^5.
\]
Let $M_\Gamma$ be the $11\times11$ coefficient matrix of these eleven higher-degree images in the displayed monomial order. It is block lower triangular:
\[
M_\Gamma=\begin{pmatrix}T_1&0\\ C&T_0\end{pmatrix},
\]
where $C$ is the lower-left block and the diagonal blocks are
\[
 T_1=\begin{pmatrix}
 1&0&1&0&0\\
 0&-\beta&-4&0&0\\
 0&0&6&0&0\\
 0&0&-4&0&-\alpha\\
 0&0&1&1&0
 \end{pmatrix},\qquad
 T_0=\begin{pmatrix}
 1&0&1&0&0&0\\
 0&1&-4&1&0&0\\
 0&0&6&-4&0&0\\
 0&0&-4&6&0&0\\
 0&0&1&-4&0&1\\
 0&0&0&1&1&0
 \end{pmatrix}.
\]
Elementary elimination gives
\begin{equation}\label{zhg5:determinant}
 \det T_1=-6\alpha\beta,\qquad
 \det T_0=-20,\qquad
 \det M_\Gamma=\det T_1\det T_0=120\alpha\beta\ne0.
\end{equation}
Thus the eleven endpoint-zero modes span the full higher-degree target. Interpolate prescribed endpoint data first, subtract the higher-degree image already produced, and correct the remaining error with these modes. This realizes arbitrary endpoint and higher-degree data simultaneously, proving surjectivity of $\Gamma$ without a sign assumption on $\alpha,\beta$.
\end{proof}

\subsection{Exact comparison under low-valence star insertion}\label{zhg5:star-comparison}

Suppose a triangular, quadrilateral, or pentagonal region $P$ in an old legal triangulation is filled by a fan rooted at a boundary vertex $a$ of $P$. Let $b,c$ be its two polygonal neighbors, and $c_j$ the other vertices, if any. Replace the fan by one new interior vertex $w$ joined to every vertex of $P$. This adds one interior vertex and three edges to the full disk, whether $P$ has three, four, or five sides.

The collision comparison requires $a,b,c$ to be noncollinear. This can always be arranged in the generic induction. If the old enhanced map is surjective at a legal realization, both full row rank and legality persist in an open neighborhood. Avoiding finitely many triple-collinearity conditions there preserves both properties. All vertices may move, including boundary vertices. This auxiliary choice is stronger than ordinary position, which only constrains incident directions at interior vertices.

Put $w=a+t\mathbf d$ for small $t>0$, where $\mathbf d$ is parallel to neither $ab$ nor $ac$. After an invertible affine change with origin $a$, assume
\[
 \mathbf d=(1,1),\qquad b=(0,\beta),\qquad
 c=(\alpha,0),\qquad\alpha\beta\ne0.
\]
For each edge $e=[u,v]$, choose a normalized line equation by writing
\[
\ell_e^{\rm raw}=c_e(t)\widehat\ell_e,
\qquad
\widehat q_e=c_e(t)^3q_e^{\rm raw},
\qquad c_e(t)\ne0\quad(t\ne0).
\]
Then $(\ell_e^{\rm raw})^3q_e^{\rm raw}
=\widehat\ell_e^{\,3}\widehat q_e$.
In the original matrix, substitute $q_e^{\rm raw}=c_e(t)^{-3}\widehat q_e$
in its six source columns. Its endpoint records become
$c_e(t)^{-3}\widehat q_e(u)$ and $c_e(t)^{-3}\widehat q_e(v)$;
multiplying these two target rows by $c_e(t)^3$ gives
$\widehat q_e(u),\widehat q_e(v)$.
Thus the normalized enhanced matrix uses
$\widehat\ell_e^{\,3}\widehat q_e$ in its coefficient blocks and
$\widehat q_e(u),\widehat q_e(v)$ as its endpoint records.
All these source and target transformations are invertible for $t\ne0$.

For the short edge, absorb its fixed nonzero orientation factor so that
$\ell_{aw}^{\rm raw}=tH$ and put
\[
p=t^3q_{aw}^{\rm raw}.
\]
Its coefficient contribution is $H^3p$ and its normalized endpoint
records are $p(a),p(w)$. Replace this pair by
\begin{equation}\label{zhg5:divided-difference}
p(a),\qquad\frac{p(w)-p(a)}{t}.
\end{equation}
For $t\ne0$ this is an invertible target transformation, and its entries
converge to $p(a),D_{\mathbf d}p(a)$ as $t\to0$.

For the two merged pairs, choose
\[
\begin{aligned}
\widehat\ell_{ab}&=X,&
\widehat\ell_{wb}&=(1-t/\beta)X+(t/\beta)Y-t,\\
\widehat\ell_{ac}&=Y,&
\widehat\ell_{wc}&=(t/\alpha)X+(1-t/\alpha)Y-t.
\end{aligned}
\]
Their limits are respectively $X,X,Y,Y$.
The other nonshort-edge determinant equations converge to the
corresponding old-edge equations, and their normalized endpoint records
also have finite limits. All cofactors and endpoint records in the
following comparison are written in these normalized coordinates.
Denote the coefficientwise matrix limit by
$\widehat{\mathcal B}_{\rm new}(0)$.

Let $(\mathcal X_{\rm old},\mathcal Y_{\rm old})$ and $(\mathcal X_{\rm new},\mathcal Y_{\rm new})$ be the source and target spaces of the old and new enhanced maps, respectively. Define a surjection $\sigma:\mathcal X_{\rm new}\to\mathcal X_{\rm old}$ by retaining unchanged-edge cofactors and, with coherent orientations, setting
\begin{equation}\label{zhg5:sigma}
 q_{ab}^{\rm old}=q_{ab}^{\rm new}+q_{wb},\qquad
 q_{ac}^{\rm old}=q_{ac}^{\rm new}+q_{wc},\qquad
 q_{ac_j}^{\rm old}=q_{wc_j}.
\end{equation}
Discard the normalized short-edge cofactor $p$. The kernel is represented by three arbitrary quadratics $p,q,\rho$:
\begin{equation}\label{zhg5:source-kernel}
 q_{wa}=p,\qquad q_{wb}=q,\qquad q_{wc}=\rho,\qquad
 q_{ab}^{\rm new}=-q,\qquad q_{ac}^{\rm new}=-\rho,
\end{equation}
All other edge variables vanish. Thus $\ker\sigma\cong\PP_2^3$ has dimension $18$.

Define the target projection $\pi:\mathcal Y_{\rm new}\to\mathcal Y_{\rm old}$. Retain degree-four and degree-five local expansion coefficients at unaffected interior vertices and endpoint records of unchanged edges. For each merged pair in \eqref{zhg5:sigma}, add the corresponding endpoint rows. Deleted fan diagonals $ac_j$ inherit their endpoint rows from $wc_j$ at collision, with the $w$ endpoint relabeled as $a$. If $a$ is interior to the \emph{whole disk}, add the limiting higher-degree blocks at $a,w$; if $a$ is a boundary vertex, discard the block at $w$. Discard both short-edge endpoint rows.

At the collision limit $a=w=0$, denote the four target endpoint records for $ab/wb$ by $(A_0,A_b,B_0,B_b)$, the first pair belonging to $ab$ and the second to $wb$. Choose
\[
S_0=A_0+B_0,\qquad S_b=A_b+B_b,\qquad
D_0=B_0,\qquad D_b=B_b.
\]
This coordinate change is invertible, with
$A_0=S_0-D_0$ and $A_b=S_b-D_b$.
The projection retains $(S_0,S_b)$; on its kernel $S_0=S_b=0$, use
$(D_0,D_b)$ as the endpoint-difference coordinates.
For $ac/wc$, likewise retain the sums and use the two $wc$ records as
endpoint-difference coordinates on the kernel.

These operations give a surjective target projection. Its kernel contains eleven new-star higher-degree coordinates, four endpoint differences from the two split parallel-edge pairs, and two short-edge collision coordinates, so
\[
 \dim\ker\pi=11+4+2=17.
\]

\begin{lemma}[Physical-edge comparison identity]\label{zhg5:comparison}
The transformed limiting matrices form a commutative diagram with exact rows:
\[
 \begin{array}{ccccccccc}
 0&\longrightarrow&\ker\sigma&\longrightarrow&\mathcal X_{\rm new}
   &\xrightarrow{\ \sigma\ }&\mathcal X_{\rm old}&\longrightarrow&0\\
 &&\downarrow\Gamma&&\downarrow\widehat{\mathcal B}_{\rm new}(0)
   &&\downarrow\mathcal B_{\rm old}&&\\
 0&\longrightarrow&\ker\pi&\longrightarrow&\mathcal Y_{\rm new}
   &\xrightarrow{\ \pi\ }&\mathcal Y_{\rm old}&\longrightarrow&0.
 \end{array}
\]
Up to invertible coordinate transformations and sign choices, the induced left map is \eqref{zhg5:Gamma}.
\end{lemma}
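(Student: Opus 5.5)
The plan is to verify the diagram in three steps: the left square commutes, the right square commutes, and the restriction of $\widehat{\mathcal B}_{\rm new}(0)$ to $\ker\sigma$ is, after the coordinate changes fixed above, exactly $\Gamma$ of \eqref{zhg5:Gamma}. Exactness of both rows is already in hand. $\sigma$ is a surjection whose kernel is parametrized by \eqref{zhg5:source-kernel}, and $\pi$ is a surjection whose kernel was counted as $11+4+2=17$. So commutativity and the identification of the induced map are all that remain.

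\textbf{Right square.} Compare $\pi\circ\widehat{\mathcal B}_{\rm new}(0)$ with $\mathcal B_{\rm old}\circ\sigma$ coordinate by coordinate.

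At unaffected interior vertices and on unchanged edges, both sides are literally the same entries. The normalized line equations and endpoint records of these edges converge to the old ones.

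For a merged pair such as $ab/wb$, the limit gives $\widehat\ell_{ab}=\widehat\ell_{wb}=X$. The contributions $X^3q_{ab}^{\rm new}$ and $X^3q_{wb}$ therefore add to $X^3q_{ab}^{\rm old}$ in every higher-degree block. The summed endpoint rows $S_0,S_b$ equal the endpoint values of $q_{ab}^{\rm new}+q_{wb}$.

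For a fan diagonal $ac_j$, $w\to a$ gives $\widehat\ell_{wc_j}\to\widehat\ell_{ac_j}$. The relabeled endpoint rows match.

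At the collided vertex $a=w$, the short-edge term $H^3p$ appears with opposite incidence signs in the blocks at $a$ and $w$. It cancels in their sum when $a$ is interior. When $a$ is a boundary vertex, the block at $w$ is discarded and $a$ has no block. All other terms at $a$ and $w$ add to the old vertex expansion at $a$. The short-edge endpoint rows are discarded by $\pi$, and $p$ is discarded by $\sigma$, so neither side sees them.

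\textbf{Left square and identification with $\Gamma$.} Take a kernel element $(p,q,\rho)$ from \eqref{zhg5:source-kernel} and compute its image in the coordinates of $\ker\pi$.

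At $w$, the higher-degree block is $\tau^{4:5}[\pm H^3p\pm X^3q\pm Y^3\rho]$. Choosing the incidence signs, and absorbing sign flips into $q$ and $\rho$, gives the first component of \eqref{zhg5:Gamma}.

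The block at $a$ receives the negatives of these terms, together with $-X^3q$ and $-Y^3\rho$ from $q_{ab}^{\rm new}=-q$ and $q_{ac}^{\rm new}=-\rho$. In the kernel of the summed block it is therefore determined by the $w$ block, so only eleven new coordinates remain.

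The endpoint-difference coordinates $D_0,D_b$ are the $wb$ records of $q$ at $0$ and $b$. Likewise $\rho(0),\rho(c)$ come from $wc$. The divided-difference short-edge records \eqref{zhg5:divided-difference} converge to $p(0),D_{(1,1)}p(0)$. Together these give the six scalar components of \eqref{zhg5:Gamma}.

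Since $\widehat{\mathcal B}_{\rm new}(0)$ maps $\ker\sigma$ into $\ker\pi$, the left square commutes. This follows from the right square, because every such kernel element maps to zero in all retained sums.

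\textbf{Main obstacle.} The delicate step is the bookkeeping at the collided vertex. One must check that the summed-block convention at $a$ removes exactly one copy of the eleven higher-degree coordinates, whether $a$ is interior or on the boundary. One must also check that the limits of the normalized $wb$ and $wc$ forms contribute no extra first-order terms. If such terms appear, they would perturb only the off-diagonal block $C$ of $M_\Gamma$, and Lemma~\ref{zhg5:local-surjectivity} would still apply. I would verify this by writing the $t$-expansion of $\widehat\ell_{wb}^{\,3}$ and confirming that it converges to $X^3$ entrywise, without any $1/t$ terms, after the normalization.
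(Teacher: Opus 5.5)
Your proposal is correct and follows essentially the same route as the paper. You verify $\pi\widehat{\mathcal B}_{\rm new}(0)=\mathcal B_{\rm old}\sigma$ edge by edge: unchanged edges, $wc_j\to ac_j$, the merged pairs with identical limiting line powers, and the summed or discarded blocks at $a,w$. You then read off the image of $\ker\sigma$ as the $w$-block $\tau^{4:5}(H^3p+X^3q+Y^3\rho)$ together with the four endpoint differences and the two collision rows $p(0),D_{(1,1)}p(0)$.

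Two small corrections. The worry in your last paragraph is unnecessary, because at $t=0$ the normalized forms $\widehat\ell_{wb},\widehat\ell_{wc}$ are exactly $X,Y$, so no first-order terms survive in the limit matrix. Also, the $a$-block of a kernel element comes only from the edges $aw,ab,ac$ and equals minus the $w$-block. Your phrase ``the negatives of these terms, together with $-X^3q$ and $-Y^3\rho$'' counts the $X^3q$ and $Y^3\rho$ contributions twice.
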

\begin{proof}
Away from $a,b,c,w$, unchanged equations converge to the old ones, and $wc_j\to ac_j$ reproduces the old columns with relabeled endpoint rows. At $b,c$, the pairs $ab/wb$ and $ac/wc$ have identical limiting line powers, and their endpoint records add as in \eqref{zhg5:sigma}. If $a$ is interior, adding the $a,w$ blocks cancels the opposite short-edge contributions. If $a$ is on the boundary, the $w$ block is discarded. Therefore
\begin{equation}\label{zhg5:commutation}
 \pi\widehat{\mathcal B}_{\rm new}(0)=\mathcal B_{\rm old}\sigma.
\end{equation}

For a source in \eqref{zhg5:source-kernel}, the \emph{full polynomial} contributions at the remote vertices $b,c$ cancel, not merely selected local homogeneous components. The remaining independent higher-degree block is the new-star block, expressed in normalized coordinates as
\[
 \tau^{4:5}(H^3p+X^3q+Y^3\rho).
\]
If $a$ is interior, the other block is its negative by the kernel condition. With the endpoint coordinates above, the source-kernel values $q_{ab}^{\rm new}=-q$, $q_{wb}=q$ produce
\[
(A_0,A_b,B_0,B_b)=(-q(0),-q(b),q(0),q(b)),
\]
The old coordinates vanish and the differences are $(D_0,D_b)=(q(0),q(b))$. The pair $ac/wc$ similarly gives $\rho(0),\rho(c)$. Thus the four endpoint-difference coordinates are
\[
 q(0),q(b),\rho(0),\rho(c),
\]
The short-edge collision rows are $p(0),D_{(1,1)}p(0)$, giving precisely the six functionals in \eqref{zhg5:Gamma}. No physical interior edge is assigned two independent quadratic sources.
\end{proof}

\begin{corollary}[Legal splitting near the root preserves full row rank]\label{zhg5:split}
Under the noncollinearity and normalization conditions above, if $\mathcal B_{\rm old}$ is surjective, then $\mathcal B_{\rm new}$ is surjective for all sufficiently small positive legal splitting parameters $t$.
\end{corollary}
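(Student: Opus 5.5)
The plan is the standard snake-lemma argument applied to the commutative diagram of Lemma~\ref{zhg5:comparison}, followed by a semicontinuity step that carries surjectivity from the collision limit $t=0$ to small positive $t$.

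\textbf{Step 1: surjectivity at the limit.} In the diagram, the rows are exact and $\pi\widehat{\mathcal B}_{\rm new}(0)=\mathcal B_{\rm old}\sigma$ by \eqref{zhg5:commutation}. The right vertical map $\mathcal B_{\rm old}$ is surjective by hypothesis, and the left map $\Gamma$ is surjective by Lemma~\ref{zhg5:local-surjectivity} because $\alpha\beta\ne0$. Take $y\in\mathcal Y_{\rm new}$.
\begin{itemize}
\item Choose $x_{\rm old}$ with $\mathcal B_{\rm old}x_{\rm old}=\pi y$, and lift it to $x\in\mathcal X_{\rm new}$ with $\sigma x=x_{\rm old}$.
\item Then $\pi\bigl(y-\widehat{\mathcal B}_{\rm new}(0)x\bigr)=\mathcal B_{\rm old}x_{\rm old}-\pi y=0$, so the residual lies in $\ker\pi$.
\item Since $\Gamma$ is surjective, the residual equals $\widehat{\mathcal B}_{\rm new}(0)k$ for some $k\in\ker\sigma$.
\item Hence $y=\widehat{\mathcal B}_{\rm new}(0)(x+k)$.
\end{itemize}
So $\widehat{\mathcal B}_{\rm new}(0)$ is surjective, which is one half of the five lemma. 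I would also check the dimension bookkeeping: $\dim\ker\sigma=18$ maps onto $\dim\ker\pi=17$, and the same holds in the target, so the two matrices have matching sizes and the full row count is $s_{\rm new}=s_{\rm old}+17$.

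\textbf{Step 2: pass to small $t>0$.} The normalized matrix $\widehat{\mathcal B}_{\rm new}(t)$ has fixed size, and its entries depend continuously on $t$. I would confirm this entry by entry:
\begin{itemize}
\item the normalized line equations converge to $X$, $X$, $Y$, $Y$ and to the old-edge equations;
\item the rescaled short-edge records converge, by \eqref{zhg5:divided-difference}, to $p(a)$ and $D_{\mathbf d}p(a)$;
\item the local expansions at $w=a+t\mathbf d$ converge to the corresponding expansions at $a$.
\end{itemize}
Pick a full-row minor of $\widehat{\mathcal B}_{\rm new}(0)$ that is nonzero. By continuity it stays nonzero for all $|t|<t_0$, so $\widehat{\mathcal B}_{\rm new}(t)$ is surjective there.

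\textbf{Step 3: undo the normalizations and ensure legality.} For $t\ne0$, $\widehat{\mathcal B}_{\rm new}(t)=P(t)\,\mathcal B_{\rm new}(t)\,Q(t)$, where:
\begin{itemize}
\item $Q(t)$ is the source rescaling by the factors $c_e(t)^{-3}$ and $t^{-3}$;
\item $P(t)$ is the target row scaling, the divided-difference change of coordinates, and the invertible sum/difference endpoint coordinates.
\end{itemize}
All of these are invertible for $t\ne0$, so $\mathcal B_{\rm new}(t)$ has the same rank and is surjective for $0<t<t_0$. For legality, $\mathbf d$ points into the fan region near the root and is parallel to neither $ab$ nor $ac$. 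Then for small $t>0$ the point $w$ lies in the interior of $P$ near $a$, and joining it to all vertices of $P$ gives a legal triangulation. This needs $P$ to be star-shaped from points near $a$, which holds because the fan rooted at $a$ was legal. Shrinking $t_0$ if necessary gives the corollary.

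\textbf{Main obstacle.} I expect the hardest point to be checking that the limit and the normalization are compatible. Specifically, the matrix limit $\widehat{\mathcal B}_{\rm new}(0)$ must really be the middle map of Lemma~\ref{zhg5:comparison}, and no entry may blow up. For example, the block at $w$ expanded in $x-w$ must converge, and the $c_e(t)$ must stay bounded away from zero after normalization. I would verify this edge type by edge type, taking each case in turn: unchanged edges, merged pairs, the diagonals $wc_j\to ac_j$, and the short edge.
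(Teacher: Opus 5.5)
Your proposal is correct and follows the paper's proof essentially step for step. The same diagram chase through $\sigma$, $\pi$ and the surjective $\Gamma$ gives surjectivity of $\widehat{\mathcal B}_{\rm new}(0)$, a nonzero full-row minor then persists for small $t$ by continuity, and the normalizing source and row transformations are invertible for $t\ne0$. Minor caveats: legality of $t$ is a hypothesis of the corollary, and your side justification (``$\mathbf d$ points into $P$ near $a$'') is insufficient at a reflex root, where one needs $\mathbf d$ in the cone \eqref{zhg5:legal-cone} as in Proposition~\ref{zhg5:triangle-boundary}; also, the $c_e(t)$ need only be nonzero for $t\ne0$, not bounded away from zero (the short edge has $c_e(t)\propto t$).
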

\begin{proof}
The exact diagram and Lemma~\ref{zhg5:local-surjectivity} lift every collision-limit target. Given $y\in\mathcal Y_{\rm new}$, choose $x_{\rm old}$ with $\mathcal B_{\rm old}x_{\rm old}=\pi y$, then $x_0$ with $\sigma x_0=x_{\rm old}$. The remainder
\[
 r=y-\widehat{\mathcal B}_{\rm new}(0)x_0
\]
satisfies $\pi r=0$ by \eqref{zhg5:commutation}. Since $r\in\ker\pi$, surjectivity of $\Gamma$ gives $k\in\ker\sigma$ with $\widehat{\mathcal B}_{\rm new}(0)k=r$. Thus $x_0+k$ maps to $y$, proving surjectivity at the limit.

Choose a nonzero full-row minor. Continuity at $t=0$ keeps it nonzero for sufficiently small nonzero $t$. All source and row transformations are invertible for $t\ne0$, so the original new enhanced map is surjective.
\end{proof}

\subsection{Induction over all abstract disk triangulations}\label{zhg5:disk-induction}

\begin{lemma}[An interior vertex of low valence]\label{zhg5:low-valence}
A triangulated disk with three outer boundary vertices and at least one interior vertex has an interior vertex of valence three, four, or five.
\end{lemma}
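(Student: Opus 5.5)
The plan is to double count edge--vertex incidences with Euler's formula, in the spirit of the identity $E=3I+b-3$ recalled in Section~\ref{sec:splines-lower-bound}. Let the disk have $I\ge1$ interior vertices and $b=3$ boundary vertices. The total edge count, boundary edges included, is $|\Delta_1|=E+b=3I+2b-3=3I+3$. Summing valences over all vertices gives $\sum_{v\in V}\deg_\Delta(v)=2|\Delta_1|=6I+6$.

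Next I bound the boundary contribution from below. Each of the three boundary vertices $x_0,x_1,x_2$ lies on the two boundary edges through it. The three pairwise boundary edges already form a triangle, so if some boundary vertex had valence exactly two, its two neighbors would be the other boundary vertices. Its unique incident face would then be the triangle $x_0x_1x_2$. That face is bounded by the whole boundary cycle, so the disk would consist of this single triangle, contradicting $I\ge1$. Hence every boundary vertex has valence at least three, and the boundary sum is at least $9$. I expect this to be the only delicate point, since it needs care that the triangulation is a simplicial disk with no repeated faces. One might even try to improve the bound, since a boundary vertex of valence three forces a specific local structure, but the crude bound is enough.

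Therefore $\sum_{v\in V^\circ}\deg_\Delta(v)\le 6I+6-9=6I-3<6I$, so some interior vertex has valence at most five. Finally, every interior vertex has valence at least three, because its link is a cycle in a simplicial complex, and a cycle needs at least three vertices. This yields an interior vertex of valence three, four, or five. No geometric input is needed: the argument is purely combinatorial and uses only Euler's identity and the boundary observation above.
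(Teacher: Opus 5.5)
Your proof is correct and is essentially the paper's argument. The paper uses the discrete curvature identity $\sum_{v\in V^\circ}(6-\deg_\Delta(v))+\sum_{v\in V\setminus V^\circ}(4-\deg_\Delta(v))=6$, which is your Euler and degree-sum count rewritten; it also makes the same observation that a valence-two boundary vertex would force the outer triangle to be the only face, and ends with the same remark that interior valences are at least three.
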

\begin{proof}
The discrete curvature identity for a disk is
\[
 \sum_{v\in V^\circ}(6-\deg_\Delta(v))
 +\sum_{v\in V\setminus V^\circ}(4-\deg_\Delta(v))=6.
\]
With an interior vertex present, each boundary vertex has valence at least three. Otherwise a valence-two boundary vertex forces the outer triangle itself to be a face, leaving no room for interior vertices. The boundary sum is therefore at most three, so the interior sum is positive and some interior valence is at most five. Interior valences in a legal disk are at least three.
\end{proof}

\begin{lemma}[Fan triangulations of short links]\label{zhg5:short-link}
The link of an interior vertex of valence three, four, or five is a simple polygon admitting a legal fan triangulation rooted at one of its vertices.
\end{lemma}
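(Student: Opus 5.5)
The plan has two parts. First I show the link is a simple closed polygon. Then I build a fan by a case analysis on its size $k=\deg_\Delta(v)\in\{3,4,5\}$.

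\textbf{The link is a simple polygon.} Let $v$ be an interior vertex of valence $k$ with neighbors $r_0,\dots,r_{k-1}$ listed counterclockwise. Because $v$ is interior and the realization is legal, consecutive pairs $r_i r_{i+1}$ span faces $(v,r_i,r_{i+1})$, and these faces cover a neighborhood of $v$. As in \eqref{u7:eq:lifted-angle}, the angular gaps lie strictly between $0$ and $\pi$ and sum to $2\pi$. The link edges $r_ir_{i+1}$ are edges of the embedded triangulation, so any two of them meet only in a common endpoint. Hence the cycle $r_0r_1\cdots r_{k-1}$ is a simple closed polygon $P$. The union of the star triangles is exactly the closed region bounded by $P$, and $P$ is star-shaped with respect to $v$. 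No vertex of $\Delta$ other than $v$ lies in the interior of $P$, since the star faces are faces of the triangulation. Also no vertex of $\Delta$ lies on $P$ except the $r_i$.

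\textbf{Choosing the fan root.} Consider the fan from $r_j$. It is legal precisely when every diagonal $r_jr_m$ with $m\neq j\pm1$ lies in the interior of $P$, apart from its endpoints, and every fan triangle is nondegenerate. The cases are as follows.
\begin{itemize}
\item For $k=3$, $P$ is already a triangle and no diagonal is needed. It is nondegenerate because its vertices are not collinear: $v$ lies inside it.
\item For $k=4$, a simple quadrilateral always has at least one interior diagonal. The root is an endpoint of that diagonal. Both resulting triangles are nondegenerate, since their vertices are three vertices of $P$ and a degenerate one would force a vertex onto the diagonal, that is, into the interior of $P$.
\item For $k=5$, I use that a simple polygon has at least $n-3$ interior diagonals in every triangulation, and that a simple pentagon has at most two reflex vertices. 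Triangulate $P$. Every triangulation of a pentagon consists of two diagonals sharing an endpoint, because two disjoint diagonals cannot both be interior in a pentagon. That shared endpoint is therefore a fan root, and the fan is exactly this triangulation.
\end{itemize}

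\textbf{Legality of the resulting triangulation.} After removing $v$, the faces outside $P$ are unchanged. The new triangles tile the region bounded by $P$, and their diagonals lie in the interior of $P$. No vertex lies in the relative interior of a diagonal, because the interior of $P$ contains no vertex once $v$ is removed. Nondegeneracy of each fan triangle follows since it is a triangle of a polygon triangulation. The intersection conditions with the outside faces hold because everything new lies inside $P$.

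The main obstacle is the $k=5$ claim that every triangulation of a simple pentagon is a fan. I would prove it by observing that a pentagon triangulation has two diagonals. These diagonals must share a vertex, because non-incident diagonals of a convex-position pentagon cross, and for a nonconvex simple pentagon two vertex-disjoint chords would split it into regions whose vertex counts do not add up. I would write this counting argument out explicitly, and I would cite the standard existence of an interior diagonal (the two-ears theorem) for the cases $k=4,5$.
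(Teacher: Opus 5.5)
Your proposal is correct and follows essentially the same route as the paper: the star of $v$ is an embedded disk bounded by the link polygon, triangles and quadrilaterals are handled directly (the quadrilateral via an existing interior diagonal), and for pentagons the two diagonals of any triangulation must share an endpoint, which is then the fan root. For that last step the paper uses that vertex-disjoint pentagon diagonals have alternating endpoints and therefore cross, while you use the triangle--quadrilateral split count; the two are equivalent, your legality remarks just spell out what the paper leaves implicit, the reflex-vertex aside is unused, and a pentagon triangulation has exactly (not at least) $n-3=2$ diagonals.
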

\begin{proof}
The incident triangles fill an embedded disk, so the link is a simple polygon. A triangle is already a fan. A simple quadrilateral has an interior diagonal, giving a fan. A triangulation of a simple pentagon has two noncrossing diagonals; they must share a vertex, since otherwise their four endpoints alternate and the diagonals cross. Every simple polygon has a noncrossing straight-line triangulation, so a legal fan exists even for a nonconvex pentagonal link.
\end{proof}

\begin{proposition}[Generic full row rank with triangular boundary]\label{zhg5:triangle-boundary}
For every finite abstract disk triangulation $\Delta$ with three outer boundary vertices, there is a nonzero coordinate polynomial $F_\Delta$ such that $\mathcal B_\Delta(p)$ is surjective on every legal realization with $F_\Delta(p)\ne0$.
\end{proposition}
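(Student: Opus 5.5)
We argue by induction on the number $I$ of interior vertices. Once a single legal realization $p_*$ with $\mathcal B_\Delta(p_*)$ of full row rank is found, a nonzero full-row minor gives $F_\Delta$ exactly as in \eqref{zhg5:rank-polynomial}. So the statement reduces to producing, for each $\Delta$ with triangular outer boundary, one legal full-rank realization. The induction hypothesis is used only as the existence of such a realization. Openness of full row rank and of legality then supplies an open set of good old realizations, inside which we may impose finitely many auxiliary polynomial nonvanishing conditions.

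For the base case $I=0$, the triangular boundary forces $\Delta$ to be a single triangle. Then $V^\circ=\varnothing$, and $\mathcal B_\Delta$ records only the two endpoint values of each of the three boundary-edge quadratics. Each quadratic can independently take any two values at two distinct points, so $\mathcal B_\Delta$ is surjective at every legal realization.

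For the inductive step with $I\ge1$, Lemma~\ref{zhg5:low-valence} gives an interior vertex $w$ of valence three, four, or five. Delete $w$ and its incident edges. By Lemma~\ref{zhg5:short-link}, the link polygon $P$ carries a fan triangulation rooted at some vertex $a$ of $P$; this yields an abstract disk triangulation $\Delta'$ with the same triangular boundary and $I-1$ interior vertices. One point must be checked combinatorially: the fan diagonals $ac_j$ must not already be edges of $\Delta$ outside $P$, so that $\Delta'$ is a genuine simplicial disk triangulation. If some choice of root produces a duplicate edge, we would try another root or another low-valence vertex. The realization itself is not the issue here, since Lemma~\ref{zhg5:short-link} already provides a legal fan in the given geometric link.

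By induction, $\Delta'$ has a legal realization $p'$ with $\mathcal B_{\Delta'}(p')$ surjective. Perturb $p'$ inside its open good neighborhood so that $a,b,c$ are noncollinear, where $b,c$ are the polygonal neighbors of $a$. Then place $w=a+t\mathbf d$ with $\mathbf d$ pointing into the interior of the angle of $P$ at $a$ and parallel to neither $ab$ nor $ac$. For small $t>0$ this is a legal realization of $\Delta$: all new triangles $wuv$ lie inside $P$ and are nondegenerate, because $w$ lies near $a$ inside the cone of $P$ at $a$ and the fan was legal. Corollary~\ref{zhg5:split} then shows that $\mathcal B_\Delta$ is surjective at this realization for all sufficiently small $t>0$, which closes the induction.

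I expect the main obstacle to be legality of the restored star when $P$ is a nonconvex pentagon. The point $w$ must see every vertex of $P$, which requires $a$ to lie in the kernel of $P$. Placing $w$ near $a$ inside $P$ works only if every segment $wc_j$ stays inside $P$. This holds in the limit because the segments $ac_j$ are interior diagonals of the fan, and it persists under small perturbation. It is also the place where the choice of fan root should be tied to visibility from $a$.
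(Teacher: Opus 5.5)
Your route is the paper's: reduce to finding one legal full-rank realization, induct on $I$ by deleting a vertex of valence three, four, or five and fanning its link, perturb so that $a,b,c$ are noncollinear, reinsert $w=a+t\mathbf d$, and apply Corollary~\ref{zhg5:split}. Your worry about duplicate diagonals cannot arise. A fan diagonal $ac_j$ is a straight segment whose relative interior lies in the open star of $w$. An existing edge $\{a,c_j\}$ would be the same segment, which contradicts the embedding. The fallback of trying another root is therefore unnecessary.

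The genuine gap is the choice of $\mathbf d$. Taking $\mathbf d$ ``into the interior of the angle of $P$ at $a$'' works only when $a$ is a convex vertex of $P$, where the relevant cones coincide.

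\begin{itemize}
\item Your limiting argument, that the diagonals $ac_j$ persist under perturbation, controls only the sides of $P$ not incident to $a$. There it is immediate anyway: positive fan areas put $a$ strictly inside those half-planes.
\item The triangles $wab$ and $wca$ degenerate at $t=0$, so no perturbation argument applies to them. Their orientation is decided by $\mathbf d$ alone. They need $w$ strictly on the inner side of the full lines through $a,b$ and through $a,c$.
\item At a reflex root with interior angle $\theta_a>\pi$, the intersection of these two inner half-planes is a cone of angle only $2\pi-\theta_a$. The interior-angle cone is strictly larger, so some of your admitted directions invert $wab$ or $wca$.
\end{itemize}

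Reflex roots genuinely occur. A nonconvex quadrilateral fan may be rooted at its reflex vertex. Worse, if the link is a pentagon $v_0,\dots,v_4$ with $v_1,v_3$ reflex, then $v_0v_2$ and $v_2v_4$ are not interior diagonals. The only possible fan roots are then the reflex vertices $v_1,v_3$.

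The fix is the paper's condition~\eqref{zhg5:legal-cone}. With $P$ oriented counterclockwise as $a,b,\ldots,c$, require $\det(b-a,\mathbf d)>0$ and $\det(\mathbf d,c-a)>0$. This open cone is nonempty because $a,b,c$ are noncollinear, even at a reflex root. It automatically excludes directions parallel to $ab$ or $ac$. Combined with the strict inequalities for the other sides, it places $w$ in the open kernel of $P$ for all small $t>0$. That is exactly the legality Corollary~\ref{zhg5:split} requires.
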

\begin{proof}
Induct on $I$. If $I=0$, the disk is a single triangle and the enhanced map consists of the two independent endpoint evaluations of a quadratic on each edge, hence is surjective.

For $I>0$, choose an ordinary legal realization. By Lemma~\ref{zhg5:low-valence}, remove the open star of an interior vertex of valence three, four, or five, and retriangulate its link by a legal fan using Lemma~\ref{zhg5:short-link}. The result is an abstract disk with fewer interior vertices, the same triangular boundary, and a legal realization.

Induction gives a nonzero witness $F_{\rm old}$. Multiply it by the noncollinearity determinants of all old vertex triples; the product is not identically zero. The coordinate choice in Section~\ref{zhg5:realization-space} provides a nonzero point of this product in a sufficiently small neighborhood of the legal fan, preserving strict nonintersection and positive areas. The old enhanced map is surjective and no three old vertices are collinear; in particular $a,b,c$ are noncollinear.

To restore the deleted vertex, let $a$ be the fan root and $b,c$ its polygonal neighbors, and orient the polygon counterclockwise as $a,b,\ldots,c$. Positive fan areas put $a$ strictly inside each supporting half-plane of a side not incident to $a$, and these strict inequalities persist near $a$. For the two incident sides, choose $\mathbf d$ satisfying
\begin{equation}\label{zhg5:legal-cone}
 \det(b-a,\mathbf d)>0,\qquad
 \det(\mathbf d,c-a)>0.
\end{equation}
The adjacent lines are noncollinear, so the intersection of these open directional half-planes is a nonempty cone, even at a reflex root. Avoid finitely many directions causing unwanted collinearities. For sufficiently small $t>0$, the point $w=a+t\mathbf d$ lies in the visibility kernel of the polygon; joining it to consecutive boundary vertices gives a legal nonoverlapping star.

Corollary~\ref{zhg5:split} preserves surjectivity. The restored abstract incidence structure is exactly $\Delta$, though its coordinates need not equal the initially chosen coordinates.

This constructs a legal surjective realization of $\Delta$. Choose a full-row minor nonzero there as $F_\Delta$, as in \eqref{zhg5:rank-polynomial}. Then $F_\Delta\not\equiv0$ and $F_\Delta(p)\ne0$ ensures surjectivity, completing the induction.
\end{proof}

\begin{lemma}[Exterior triangular completion]\label{zhg5:cap}
Every legally embedded disk $\Delta$ is a subcomplex of a finite straight-line triangulation $\widetilde\Delta$ of a larger triangular disk. The completion retains all old vertices and edges without subdivision, has exactly three outer boundary vertices, and adds no edge incident to an old interior vertex. Retaining only the degree-four and degree-five expansion-coefficient rows at old interior vertices and endpoint rows of old edges gives $[\mathcal B_\Delta\ \ 0]$.
\end{lemma}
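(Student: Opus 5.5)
The lemma has two parts: a geometric completion of the disk, and a statement about restricting the enhanced matrix. I will handle them in that order.

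\textbf{Construction of $\widetilde\Delta$.} Fix the legal realization of $\Delta$ and choose a large triangle $T$ containing the closed polygonal disk $\Omega_p$ in its interior. The region $T\setminus\operatorname{int}\Omega_p$ is a closed annulus. Its inner boundary is the boundary polygon of $\Delta$, possibly with collinear consecutive edges, and its outer boundary is $T$. I would triangulate this annulus using only the existing boundary vertices of $\Delta$ and the three corners of $T$, with straight segments and no new interior vertices. The standard route is to cut the annulus along two noncrossing segments from corners of $T$ to visible boundary vertices of $\Omega_p$; such visible vertices exist, for instance the extreme vertices of the convex hull in suitable directions. This turns the annulus into simple polygons. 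Every simple polygon has a straight-line triangulation by diagonals, as already used in Lemma~\ref{zhg5:short-link}. Collinear boundary edges of $\Omega_p$ cause no difficulty: no existing edge is subdivided, and diagonals are chosen inside the polygons, so they never run along an old edge.

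\textbf{Checking the required properties.} All added edges lie in the closed exterior region, so each has both endpoints among the old boundary vertices and the corners of $T$. In particular no added edge meets an old interior vertex. The result $\widetilde\Delta$ is a legal triangulation of $T$ containing $\Delta$ as a subcomplex, and it has exactly three outer boundary vertices. Old interior vertices keep their complete stars, so their incident edges, line forms and signs are unchanged. Old boundary vertices become interior in $\widetilde\Delta$, and old boundary edges become interior edges.

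\textbf{Restriction of the matrix.} Consider $\mathcal B_{\widetilde\Delta}$ and keep only the degree-four and degree-five rows at old interior vertices together with the endpoint rows of old edges. Split the columns into old edges and new edges. An endpoint row of an old edge $e$ involves only $q_e$. A local-expansion row at an old interior vertex $v$ involves only the edges incident to $v$, and these are all old. So new-edge columns vanish in the retained rows. On old-edge columns the retained rows coincide with $\mathcal B_\Delta$, because the definition of $\mathcal B_\Delta$ already carries auxiliary cofactors and endpoint records on boundary edges. The only point to check is that the incidence signs at old interior vertices are taken from the same coherent orientation; I would fix the orientation of $\widetilde\Delta$ to restrict to the chosen one on $\Delta$. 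This gives $[\mathcal B_\Delta\ \ 0]$. Consequently, surjectivity of $\mathcal B_{\widetilde\Delta}$ from Proposition~\ref{zhg5:triangle-boundary} implies surjectivity of $\mathcal B_\Delta$.

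\textbf{Expected obstacle.} I expect the main difficulty to be the annulus triangulation for nonconvex $\Omega_p$. The danger is a pocket vertex of $\Omega_p$ that no corner of $T$ can see. Cutting first into simple polygons and then applying the polygon triangulation theorem avoids needing such visibility directly.
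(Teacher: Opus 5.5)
Your proposal is correct and follows essentially the same route as the paper. In both, $\Omega_p$ is enclosed in a triangle $T$, the exterior annulus is cut by two bridges from corners of $T$ to extreme boundary vertices, the two resulting simple polygons are triangulated by diagonals, and new-edge columns vanish on the retained rows, which gives $[\mathcal B_\Delta\ \ 0]$. The paper is more explicit in two places. It chooses a generic direction with unique extreme boundary vertices $u_\pm$ and a triangle with $x(A)<x(u_-)$, $x(B)>x(u_+)$, so the bridges lie in disjoint open half-planes and cannot cross. It also temporarily omits flat boundary vertices and reinserts them by joining each to the opposite vertex of its adjacent triangle, whereas you rely directly on the polygon triangulation theorem.
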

\begin{proof}
Choose a horizontal coordinate $x$ taking distinct values at the old boundary vertices, and let $u_-$ and $u_+$ be its unique minimum and maximum vertices. Take a triangle $T=ABC$ strictly containing $\Omega$, with unique leftmost and rightmost vertices $A,B$ satisfying $x(A)<x(u_-)$ and $x(B)>x(u_+)$. Except at their old endpoints, the bridges $Au_-$ and $Bu_+$ lie respectively in $x<x(u_-)$ and $x>x(u_+)$. They therefore meet neither $\Omega$ nor each other. The bridges cut the polygonal annulus between $\partial\Omega$ and $\partial T$ into two simple polygons. Triangulate each by diagonals while retaining all marked boundary vertices. Collinear marked vertices may be temporarily omitted and then reinserted by joining them to the opposite vertex of the adjacent triangle. This preserves every old edge without further subdivision and introduces no vertex on the three outer sides. Gluing to $\Delta$ gives $\widetilde\Delta$ with exactly three outer boundary vertices; all new edges lie outside $\Omega$ except at old boundary endpoints, so none is incident to an old interior vertex.

Retain from $\mathcal B_{\widetilde\Delta}$ only the degree-four and degree-five coefficient rows at old interior vertices and endpoint records of old edges. The old columns give exactly $\mathcal B_\Delta$. New edges have no old interior endpoints or retained old-edge endpoint records, so their restricted columns vanish. The restricted matrix is $[\mathcal B_\Delta\ \ 0]$.
\end{proof}

\begin{theorem}[Generic attainment for quintic $C^2$ splines]\label{zhg5:final}
For every finite abstract disk triangulation $\Delta$ admitting a legal straight-line realization, there is a nonzero coordinate polynomial $F_\Delta$ such that every $p\in\mathcal R_\Delta$ with $D_\Delta(p)F_\Delta(p)\ne0$ satisfies
\[
 \dim S_5^2(\Delta,p)=L_5^2(\Delta,p).
\]
The same polynomial condition also gives $\dim S_6^2=L_6^2$.
\end{theorem}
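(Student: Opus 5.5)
The plan is to assemble the pieces already proved in this appendix in three steps: reduce to triangular outer boundary, pass from the enhanced matrix to the original cofactor map, and finally lift from degree five to degree six by a graded-module argument.

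\textbf{Step 1: surjectivity of $\mathcal B_\Delta$ at one legal realization.} Fix $\Delta$ with a legal realization and apply Lemma~\ref{zhg5:cap} to obtain a completion $\widetilde\Delta$ with three outer boundary vertices. Proposition~\ref{zhg5:triangle-boundary} gives a nonzero polynomial $F_{\widetilde\Delta}$ on the coordinates of $\widetilde\Delta$. I then choose a legal realization $\widetilde p$ of $\widetilde\Delta$ near the completion with $F_{\widetilde\Delta}(\widetilde p)\ne0$. This is possible because legality is an open condition and a nonzero polynomial cannot vanish on an open set, as recorded in Section~\ref{zhg5:realization-space}.

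At $\widetilde p$, the map $\mathcal B_{\widetilde\Delta}$ is surjective. By Lemma~\ref{zhg5:cap}, restricting its rows to those indexed by old interior vertices and old edges gives $[\mathcal B_\Delta\ \ 0]$. A row restriction of a surjective matrix is still surjective, and the zero columns contribute nothing, so $\mathcal B_\Delta$ is surjective at the restricted realization $p_*$. Here $p_*$ is legal for $\Delta$, being a subcomplex of a legal embedding.

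\textbf{Step 2: the witness and the quintic conclusion.} Take a full-row minor of $\mathcal B_\Delta$ nonzero at $p_*$ as $F_\Delta$, as in~\eqref{zhg5:rank-polynomial}. Then $F_\Delta\not\equiv0$, and $F_\Delta(p)\ne0$ forces $\mathcal B_\Delta(p)$ to be surjective. Lemma~\ref{zhg5:lift} then gives surjectivity of $\Phi_5$. The cofactor count of \citet[Sections 2--3]{MourrainVillamizar13} identifies $\dim S_5^2-L_5^2$ with $\dim\coker\Phi_5$, with the actual local corrections. Hence $\dim S_5^2=L_5^2$.

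The factor $D_\Delta$ is not needed for this deduction. It is included so that $H_{\Delta,5}=D_\Delta F_\Delta$ also fixes $g_v=\deg_\Delta(v)$, making the lower bound itself constant on the nonvanishing set.

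\textbf{Step 3: degree six.} I expect this step to be the main obstacle, since the enhanced matrix was built only for quadratic cofactors and cannot be reused directly. I would instead work in the homogenized setting of Appendix~\ref{app:u7}, with $R=\RR[x,y,z]$ and ideals $J_v=\sum_{e\ni v}(\ell_e^3)$. The cofactor map
\[
\Phi:\bigoplus_{e}R(-3)\to\bigoplus_v J_v
\]
is then a graded $R$-module map, its cokernel $\mathcal M$ is a graded module, and the defect in degree $d$ equals $\dim\mathcal M_d$ by the same cofactor count.

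Since every $J_v$ is generated in degree three, for $d\ge3$ one has $(J_v)_{d+1}=R_1(J_v)_d$. Consequently $\mathcal M_{d+1}=R_1\mathcal M_d$. Surjectivity in degree five, $\mathcal M_5=0$, therefore forces $\mathcal M_6=0$, and so $\dim S_6^2=L_6^2$ under the same condition $D_\Delta F_\Delta\ne0$.

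The point to check carefully is that the affine $\Phi_5$ of~\eqref{zhg5:A5}, with target $U_{v,5}$, coincides with the degree-five homogeneous component of $\Phi$ after dehomogenization. It does, because $(J_v)_5=\sum_e\ell_e^3R_2$ matches $\ell_e^3\PP_2$ under homogenization. If that identification holds, the argument above completes the theorem.
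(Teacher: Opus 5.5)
Your proposal is correct and follows the paper's proof. You complete to a triangular boundary via Lemma~\ref{zhg5:cap}, restrict the surjective $\mathcal B_{\widetilde\Delta}$ to $[\mathcal B_\Delta\ \ 0]$, take a nonzero full-row minor as $F_\Delta$, and conclude with Lemma~\ref{zhg5:lift} and the cofactor count; your remark that $D_\Delta$ is not needed for the deduction itself is also right. Your degree-six step is the graded form of the paper's argument, which writes $U_{v,6}=U_{v,5}+xU_{v,5}+yU_{v,5}$ using $\PP_3=\PP_2+x\PP_2+y\PP_2$ and lifts the three quintic parts through $\Phi_5$; this is exactly your $\mathcal M_6=R_1\mathcal M_5=0$.
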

\begin{proof}
For triangular boundary, Proposition~\ref{zhg5:triangle-boundary} gives $F_\Delta$. For any other polygonal boundary, complete the disk using Lemma~\ref{zhg5:cap}. Proposition~\ref{zhg5:triangle-boundary} supplies nonzero $F_{\widetilde\Delta}$. Choose a legal realization in a legal coordinate neighborhood with $F_{\widetilde\Delta}\ne0$, making the larger map surjective. Restriction to old rows and the form $[\mathcal B_\Delta\ \ 0]$ give surjectivity on the induced old coordinates.

We have a legal surjective realization of $\Delta$; choose a full-row minor nonzero there as $F_\Delta$. Construct $D_\Delta$ by \eqref{zhg5:ordinary-polynomial} and put $H_{\Delta,5}=D_\Delta F_\Delta\not\equiv0$. For every legal realization with $D_\Delta(p)F_\Delta(p)\ne0$, Lemma~\ref{zhg5:lift} and the cofactor criterion in Section~\ref{zhg5:enhanced-map} give $\dim S_5^2=L_5^2$. This satisfies Billera's genericity convention in Section~\ref{sec:phase-definitions}, proving Theorem~\ref{thm:generic}. For degree six, define $\Phi_6$ and $U_{v,6}$ by replacing $\PP_2$ with $\PP_3$ in \eqref{zhg5:A5}; since $\PP_3=\PP_2+x\PP_2+y\PP_2$, lifting the three quintic target parts through $\Phi_5$ proves that $\Phi_6$ is surjective under the same condition, hence $\dim S_6^2=L_6^2$ by the cofactor count.
\end{proof}

\clearpage

\begingroup\small\raggedright
\bibliographystyle{plainnat}
\bibliography{references}
\endgroup
\end{document}